\newcommand{\showcomments}{yes}
\newsavebox{\commentbox}
\newtheorem{prop}{Proposition}[section]
\newtheorem{thm}[prop]{Theorem}
\newtheorem*{thm*}{Theorem}
\newtheorem*{addendum*}{Addendum}
\newtheorem{cor}[prop]{Corollary}
\newtheorem{lem}[prop]{Lemma}
\newtheorem{conj}[prop]{Conjecture}
\newtheorem{thmintro}{Theorem}
\newtheorem*{conj*}{Conjecture}
\newtheorem{corintro}[thmintro]{Corollary}
\newtheorem*{claim*}{Claim}
\newtheorem*{convention*}{Convention}
\theoremstyle{definition}
\newtheorem*{defn*}{Definition}
\newtheorem{remark}[prop]{Remark}
\newtheorem*{scholium*}{Scholium}
\theoremstyle{remark}
\newtheorem{example}[prop]{Example}
\newtheorem*{example*}{Example}
\numberwithin{equation}{section}
\newcommand{\FF}{\mathbf{F}}
\newcommand{\NN}{\mathbf{N}}
\newcommand{\QQ}{\mathbf{Q}}
\newcommand{\ZZ}{\mathbf{Z}}
\newcommand{\GL}{\mathrm{GL}}
\newcommand{\SL}{\mathrm{SL}}
\newcommand{\PSL}{\mathrm{PSL}}
\newcommand{\PGL}{\mathrm{PGL}}
\newcommand{\la}{\langle}
\newcommand{\ra}{\rangle}
\newcommand{\inv}{^{-1}}
\newcommand{\MM}{\mathbb{M}}
\newcommand{\centra}{\mathscr{Z}}
\DeclareMathOperator{\chr}{char} 
 \DeclareMathOperator{\Fix}{Fix} 
 \DeclareMathOperator{\Ker}{Ker}
 \DeclareMathOperator{\End}{End}
\newcommand{\bd}{\partial} 
\def\Aut{\mathop{\mathrm{Aut}}\nolimits}
\def\Sym{\mathop{\mathrm{Sym}}\nolimits}
\def\max{\mathop{\mathrm{max}}\nolimits}
\begin{document}
\title[Trees, contractions and Moufang sets]{Trees, contraction groups and Moufang sets}
\author[P-E. Caprace]{Pierre-Emmanuel Caprace*}
\address{UCLouvain, Chemin du Cyclotron 2, 1348 Louvain-la-Neuve, Belgium}
\email{pierre-emmanuel.caprace@uclouvain.be}
\thanks{*F.R.S.-FNRS Research Associate, supported in part by FNRS grant F.4520.11 and by the European
Research Council (ERC)/ grant agreement 278469}
\author[T. De Medts]{Tom De Medts}
\address{Ghent University, Dept. of Mathematics,
\mbox{Krijgslaan} 281 -- S22, 9000 Gent, Belgium}
\email{tdemedts@cage.ugent.be}

\date{January 2012}

\begin{abstract}
We study closed subgroups $G$ of the automorphism group of a locally finite tree $T$ acting doubly transitively on the boundary.  We show that if the stabiliser of some end is metabelian, then there is a local field $k$ such that $\PSL_2(k) \leq G \leq \PGL_2(k)$. We also show that  the contraction group of some hyperbolic element is closed and torsion-free  if and only if  $G$ is (virtually) a rank one simple $p$-adic analytic group for some prime $p$. A key point is that if some contraction group is closed, then $G$ is boundary-Moufang, meaning that the boundary $\bd T$ is a Moufang set. We collect basic results on Moufang sets arising at infinity of locally finite trees, and provide a complete classification in case the root groups are torsion-free. 
\end{abstract}

\maketitle

\setcounter{tocdepth}{1}    
\tableofcontents

\section{Introduction}
Let $T$ be a locally finite tree without vertices of valency one and $\partial T$ denote its set of ends. The group $\Aut(T)$, endowed with the compact open topology, is a second countable totally disconnected locally compact group. This paper is devoted to the study of closed subgroups $G \leq \Aut(T)$ acting doubly transitively on $\bd T$. Examples of such groups arise from rank one simple algebraic groups over local fields via Bruhat--Tits theory, and are thus linear. On the other hand, many non-linear examples exist as well, e.g.\@ the full group $\Aut(T)$ when $T$ is semi-regular, or subgroups of $\Aut(T)$ with prescribed local actions as constructed by Burger--Mozes~\cite{BurgerMozes:trees1}, or certain complete Kac--Moody groups of rank two. It is thus a natural problem to find some criteria allowing one to distinguish between the linear subgroups of $\Aut(T)$ and the non-linear ones. This question is the starting point of this work. 

\subsection{Linearity criteria for boundary-transitive groups}

The following result provides a necessary and sufficient condition for a boundary-transitive group to essentially coincide with $\SL_2(k)$ over some local field $k$.

\begin{thmintro}\label{thm:MetabelianStab}
Let $G \leq \Aut(T)$ be a closed non-compact subgroup acting transitively on $\bd T$. Assume there is some $\xi \in \bd T$ such that the stabiliser $G_\xi$ is metabelian%
\footnote{A group is called {\em metabelian} if its commutator subgroup is abelian, or equivalently, if it is solvable of derived length at most $2$.}. 

Then there is a  non-Archimedean local field $k$ such that $\PSL_2(k) \leq G \leq \PGL_2(k)$ as closed subgroups, and $T$ is equivariantly isometric (up to scaling) to the Bruhat--Tits tree of $\PGL_2(k)$. 
\end{thmintro}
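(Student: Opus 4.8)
The plan is to start from the metabelian hypothesis on the end stabiliser and pin down the structure of $T$ and $G$ step by step, ultimately identifying $G$ with a subgroup of $\PGL_2(k)$ via Bruhat–Tits theory.

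First I would analyse the structure of $G_\xi$. Since $G$ is non-compact, closed and transitive on $\bd T$, it is $2$-transitive on $\bd T$ (a standard fact: a boundary-transitive closed subgroup that is non-compact contains a hyperbolic element, hence acts $2$-transitively), so $G_\xi$ acts transitively on $\bd T \setminus \{\xi\}$. Fix a second end $\eta$ and a hyperbolic element $t$ with axis $(\eta,\xi)$ translating towards $\xi$. Then $G_\xi = U_\xi \rtimes G_{\xi,\eta}$ where $U_\xi$ is the ``horospherical'' contraction-type subgroup fixing $\xi$ and acting simply transitively on $\bd T\setminus\{\xi\}$ after conjugation, and $G_{\xi,\eta}$ stabilises the axis. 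The metabelian hypothesis forces $[G_\xi, G_\xi]$ to be abelian; in particular $U_\xi$, being a subgroup of $G_\xi$, has abelian commutator subgroup, and more importantly $G_{\xi,\eta}$ normalises $U_\xi$ and the conjugation action is ``contracting''. The key structural consequence to extract is that $U_\xi$ must itself be abelian: if it were not, $[U_\xi,U_\xi]$ would be a nontrivial abelian subgroup that is still contracted by $t$, and one plays off the action of $G_{\xi,\eta}$ together with $2$-transitivity to derive that the root group is abelian. Once $U_\xi$ is abelian, the boundary $\bd T$ with the two root groups $U_\xi$ and $U_\eta = t$-conjugates becomes a \emph{Moufang set} with abelian root groups.

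Next I would invoke the classification of Moufang sets with abelian root groups: a Moufang set whose root groups are abelian is, under mild finiteness hypotheses, a projective line $\mathbb{P}^1(k)$ over some (skew) field, by the work of De Medts–Weiss / Segev. Here the root groups are even locally compact (closed in $G$) and the tree is locally finite, so the relevant field $k$ is a non-Archimedean local field, and the root group $U_\xi$ is isomorphic to the additive group $(k,+)$. The Hua maps and the structure of $G_{\xi,\eta}$ (acting on $U_\xi$ by multiplication) identify $G_{\xi,\eta}$ with a subgroup of $k^\times$ of finite index (because the tree is locally finite, the translation lengths form a discrete subgroup, which forces the valuation to land in $\mathbb{Z}$ and pins down $k$ up to the residue field and scaling). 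Assembling $U_\xi$, $U_\eta$ and the torus, the little projective group generated by the root groups is $\PSL_2(k)$, and $G$ sits between $\PSL_2(k)$ and its normaliser $\PGL_2(k)$ in $\Aut(\bd T)$; the semi-regular tree on which $\PGL_2(k)$ acts with the correct valencies is exactly its Bruhat–Tits tree, so $T$ is equivariantly isometric to it up to scaling.

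The main obstacle I expect is the passage from ``$G_\xi$ metabelian'' to ``the root group $U_\xi$ is abelian'': a priori metabelianness allows $U_\xi$ nonabelian as long as $[U_\xi,U_\xi]$ is abelian and centralised appropriately, so one has to use the dynamics of the hyperbolic element $t$ (contraction of $U_\xi$) together with the transitivity of $G_{\xi,\eta}$ on $U_\xi \setminus \{1\}$ to rule out the Heisenberg-type and other nilpotent-of-class-$2$ possibilities. A clean way to do this is: the lower central series of $U_\xi$ consists of closed $t$-invariant subgroups, each a union of ``levels''; $2$-transitivity makes $G_{\xi,\eta}$ act transitively on $U_\xi\setminus\{1\}$, so $U_\xi$ is characteristically simple as a topological group, which combined with being a (topologically) nilpotent locally compact group forces it to be abelian. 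After that point the Moufang-set machinery and the arithmetic of local fields do the rest in a fairly routine manner.
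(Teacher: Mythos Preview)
Your overall strategy is close to the paper's, but there are two genuine gaps.

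\textbf{The argument for $U_\xi$ abelian is wrong.} You assert that ``$2$-transitivity makes $G_{\xi,\eta}$ act transitively on $U_\xi\setminus\{1\}$''. This is false: $2$-transitivity only says that $G_\xi$ is transitive on $\bd T\setminus\{\xi\}$; transitivity of the two-point stabiliser on $\bd T\setminus\{\xi,\eta\}$ would be $3$-transitivity, which is neither assumed nor true in the target examples. Already for $\PSL_2(\QQ_p)$ the diagonal torus acts on the root group $\QQ_p$ via $t\cdot u=t^2u$, so the orbits are the nonzero square classes, of which there are four (eight for $p=2$). Hence your characteristic-simplicity argument collapses, and with it the deduction that $U_\xi$ is abelian. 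The paper's device is entirely different and purely dynamical: for $u$ in the contraction group $U_\alpha$ of a hyperbolic $\alpha$ with repelling end $\xi$, each $\alpha^n u^{-1}\alpha^{-n}u$ lies in $[G_\xi,G_\xi]$, and letting $n\to\infty$ gives $u\in\overline{[G_\xi,G_\xi]}$. Metabelianity of $G_\xi$ then forces $U_\alpha$ abelian; an abelian contraction group is automatically closed, and this yields the boundary-Moufang structure with abelian root groups.

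\textbf{The Moufang step is incomplete.} The classification of proper Moufang sets with abelian root groups is a well-known \emph{open} conjecture; De~Medts--Weiss and Segev do not give a projective line from abelian root groups alone. What is available (De~Medts--Weiss for characteristic $\neq 2$, Gr\"uninger for characteristic $2$) is the classification when, in addition, the \emph{Hua subgroup} is abelian. The paper therefore proves separately that $H=G^+_{0,\infty}$ is abelian, again exploiting the metabelian hypothesis: since the Moufang set is special one has $[U_\infty,H]=U_\infty$, hence $U_\infty\leq [G_\infty,G_\infty]$; the latter is abelian, so contained in $\centra_{G_\infty}(U_\infty)=U_\infty$, whence $H\cong G_\infty/U_\infty$ is abelian. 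Only then does the identification with $\PSL_2(k)$ go through, with continuity of the $\mu$-maps forcing $k$ to be a non-Archimedean local field.
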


It is an interesting fact that a non-compact closed subgroup $G \leq \Aut(T)$ which acts transitively on $\bd T$ is automatically doubly transitive on $\bd T$ (see \cite[Lemma~3.1.1]{BurgerMozes:trees1}). In an appendix, we show that an abstract triply transitive permutation group with metabelian stabilisers must automatically be a projective group. This is however not sufficient to deduce Theorem~\ref{thm:MetabelianStab}, since $\PSL_2(k)$ is doubly transitive, but not necessarily  triply transitive, on the projective line over $k$ in general. 

\medskip
In order to present the next statement,  we first recall that,  given an element $\alpha$ in a topological group $G$,  the set
\[
U_\alpha = \{ g \in G \mid \lim_{n \to \infty} \alpha^n g \alpha^{-n} = 1\}
\]
is a subgroup of $G$ called the \textbf{contraction group} associated to $\alpha$. It should be emphasized that the contraction group of an element is not a closed subgroup in general, even if $G$ is locally compact. The simplest illustration of this possibility is given by the `unrestricted lamplighter group' $(\prod_\ZZ F) \rtimes \ZZ$, where $F$ is a non-trivial finite group and $\ZZ$ acts by shifting the indices. One verifies easily that the contraction group associated with the generator $\alpha$ of the cyclic factor $\ZZ$ is a non-closed dense subgroup of the compact normal subgroup $\prod_\ZZ F$. On the other hand, it is true that the contraction groups are always closed if $G$ belongs to some special classes of locally compact groups, e.g.\@ Lie groups (see \cite{HazodSiebert}) or $p$-adic analytic groups (see \cite{Wang84}). 
The following result provides a partial converse to this property among boundary-transitive automorphism groups of trees. 

\begin{thmintro}\label{thm:padic}
Let $G \leq \Aut(T)$ be a closed non-compact subgroup acting transitively on $\bd T$.  Assume that for some hyperbolic element $\alpha \in G$,  the contraction group $U_\alpha$ is closed and torsion-free.

Then there is a prime $p$, a $p$-adic field $k$, a semisimple algebraic $k$-group $\mathbf G$ of $k$-rank one, and a continuous open homomorphism $\mathbf G(k) \to G$ whose image $G^+$ is a characteristic subgroup of finite index.  Moreover $T$ is equivariantly isometric (up to scaling) to the Bruhat--Tits tree of $\mathbf G(k)$.  
\end{thmintro}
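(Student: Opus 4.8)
The strategy is to reduce Theorem~\ref{thm:padic} to Theorem~\ref{thm:MetabelianStab} by showing that the hypotheses force the stabiliser $G_\xi$ of an end to be metabelian, and then to upgrade the conclusion $\PSL_2(k)\le G\le\PGL_2(k)$ of Theorem~\ref{thm:MetabelianStab} to the statement in terms of algebraic groups. The intermediate tool, flagged in the abstract as "a key point", is that if some contraction group is closed then $G$ is \emph{boundary-Moufang}: the root group $U_\alpha$ (for $\alpha$ hyperbolic, translating along an axis with endpoints $\xi_-,\xi_+$) coincides with, or is cocompact in, the fixator of a half-tree, and as $\alpha$ and its conjugates vary these root groups generate a subgroup $G^+$ acting on $\bd T$ as a Moufang set with abelian-by-nothing "little projective group" structure. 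Concretely, I would first recall/prove that closedness of $U_\alpha$ implies $U_\alpha=\Fix_G(\text{half-tree})$ up to finite index (using that $U_\alpha$ is always contained in such a fixator, is normalised by $\alpha$, and that the quotient of the fixator by $U_\alpha$ is compact; closedness plus the tree structure pins it down), and that the family $\{U_\alpha, U_{\alpha^{-1}}, \text{their conjugates}\}$ satisfies the Moufang axioms on $\bd T$.

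\smallskip
The second step is to analyse the root group $U = U_\alpha$ as a topological group. It is a closed subgroup of the profinite group $\Fix_G(\text{half-tree})$, hence profinite; it is torsion-free by hypothesis; and it carries the contracting automorphism $\alpha$ (conjugation by $\alpha$ contracts $U$ to $1$). A torsion-free profinite group admitting such a contracting automorphism must be a finitely generated pro-$p$ group for a single prime $p$ — this uses that the contraction forces $U$ to have a countable descending filtration with finite elementary-abelian-ish layers, and torsion-freeness rules out mixed primes; one concludes $U$ is a uniform (or at least $p$-adic analytic) pro-$p$ group, in particular it is nilpotent, indeed I expect one can push to show $U$ is abelian. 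The cleanest route to abelianness: the commutator map on the associated graded is compatible with the $\alpha$-weight grading, and in a rank-one tree situation the weights available are too constrained (essentially a single weight, since the translation length of $\alpha$ gives one "layer" of contraction per unit) to support a nonzero bracket — so $[U,U]=1$. Granting $U$ abelian, the end stabiliser $G_\xi$, which is (virtually) $U\rtimes(\text{diagonal torus})$ with the torus abelian and acting on $U$, has abelian derived subgroup, i.e.\ $G_\xi$ is metabelian.

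\smallskip
The third step invokes Theorem~\ref{thm:MetabelianStab}: since $G$ is closed, non-compact, transitive on $\bd T$, and now has a metabelian end stabiliser, there is a non-Archimedean local field $k$ with $\PSL_2(k)\le G\le\PGL_2(k)$ and $T$ the Bruhat--Tits tree of $\PGL_2(k)$ up to scaling. It remains to identify the characteristic $p$ and repackage this in algebraic-group language. Here the torsion-freeness of $U_\alpha$ is used a second time and crucially: the root group of $\PGL_2(k)$ is the additive group $(k,+)$, which is torsion-free precisely when $k$ has characteristic $0$, i.e.\ when $k$ is a $p$-adic field $\mathbf{Q}_p^{\mathrm{fin}}$ (a finite extension of $\mathbf Q_p$). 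In equal characteristic $p$ the additive group of $k$ has exponent $p$, contradicting our hypothesis. So $k$ is $p$-adic, $\mathbf G = \mathrm{PGL}_2$ (viewed as a semisimple $k$-group of $k$-rank one), and the inclusion $\mathrm{PGL}_2(k)\supseteq G\supseteq\mathrm{PSL}_2(k)$ together with the fact that $\mathrm{PSL}_2(k)$ is the image of the simply connected cover $\mathrm{SL}_2(k)\to\mathrm{PGL}_2(k)$ gives the continuous open homomorphism $\mathbf G(k)\to G$ (taking $\mathbf G=\mathrm{SL}_2$) with image $G^+=\mathrm{PSL}_2(k)$ of finite index (index dividing $|k^\times/(k^\times)^2|$, which is finite for $p$-adic $k$), and $G^+$ is characteristic in $G$ because it is the subgroup generated by all contraction groups (equivalently the closure of the subgroup generated by unipotents), a description manifestly invariant under $\Aut(G)$.

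\smallskip
\textbf{Main obstacle.} The crux is Step~2: deducing that the closed torsion-free contraction group is abelian (or at least $p$-adic analytic and nilpotent) purely from the combinatorics of the tree. Showing $U_\alpha$ is pro-$p$ and finitely generated is plausible from the contracting automorphism alone; getting commutativity, or even just enough structure (a $\mathbf Z_p$-module structure) to feed the algebraic recognition, is the delicate point — one must rule out, e.g., a Heisenberg-type pro-$p$ root group, which is where the rank-one hypothesis (codimension-one half-trees, a one-dimensional "torus") has to be exploited in earnest, presumably via the Moufang property and the structure theory of Moufang sets with abelian-or-nearly-abelian root groups referenced in the abstract.
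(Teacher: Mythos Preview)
Your strategy has a fundamental gap: you are trying to reduce to Theorem~\ref{thm:MetabelianStab} by proving that the root group $U=U_\alpha$ is abelian, but this is simply \emph{false} under the hypotheses of Theorem~\ref{thm:padic}. The conclusion of the theorem allows $\mathbf G$ to be \emph{any} semisimple $k$-group of $k$-rank one, not just $\PGL_2$. For instance, if $\mathbf G = \mathrm{SU}_3$ over a $p$-adic field (or more generally any of the skew-hermitian groups in Example~\ref{ex:skewherm}), then the root group is a $p$-adic Heisenberg group: torsion-free, nilpotent of class~$2$, and not abelian. So the ``main obstacle'' you flag---ruling out Heisenberg-type root groups---cannot be overcome, because those groups genuinely occur. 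Your Step~3 would then yield $\PSL_2(k)\le G\le\PGL_2(k)$, which is strictly weaker than what Theorem~\ref{thm:padic} asserts and excludes legitimate examples.

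There is also a technical error in Step~2: $U_\alpha$ is \emph{not} profinite. It is not contained in the fixator of a single half-tree; rather, each element of $U_\alpha$ fixes some half-tree pointing to $\xi$, but these half-trees vary with the element (Lemma~\ref{lem:RootGroupElliptic}). The group $U_\alpha$ is a countable ascending union of compact open subgroups, hence $\sigma$-compact and locally compact but not compact---indeed it acts sharply transitively on the non-compact set $\bd T\setminus\{\xi\}$.

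The paper's actual route is quite different. After establishing the boundary-Moufang property (Theorem~\ref{thm:ClosedContraction}), one invokes the Gl\"ockner--Willis structure theorem for closed contraction groups (Theorem~\ref{thm:GlocknerWillis}): a torsion-free $U$ decomposes as $U_1\times\cdots\times U_n$ with each $U_i$ nilpotent $p_i$-adic analytic. The work is then to show $n=1$. This is done by analysing the Hua subgroup $H=G_{0,\infty}$: one shows $H$ is virtually pro-nilpotent, splits its compact part along the primes $p_i$, and uses centralisers of these pieces to produce sub-Moufang sets (Lemma~\ref{lem:SubMoufangSet}) whose root groups are the individual $U_i$. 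Each such sub-Moufang set is recognised as algebraic via a local $p$-adic analyticity criterion (Propositions~\ref{prop:pro-p} and~\ref{prop:padic:local}). Finally, a further centraliser argument isolates $\centra(U)$ as the root group of an \emph{abelian} sub-Moufang set, and the irreducibility of the Hua action on abelian root groups (Theorem~\ref{thm:recap:MoufangSet}(iii)) forces $n=1$. Abelianness is thus used only for the \emph{centre} of $U$, never for $U$ itself.
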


The proofs of both theorems above are based on a strong relation between boundary-transitive automorphism groups of trees and Moufang sets, which we shall now describe. 

\subsection{Relation to Moufang sets}


A subgroup $G \leq \Aut(T)$ is called \textbf{boundary-Moufang} if $G$ is closed and possesses a full conjugacy class $(U_\xi)_{\xi \in \partial T}$ of closed subgroups indexed by $\bd T$, called \textbf{root groups}, satisfying the following condition.
\smallskip
\begin{description}
\item[(Moufang condition)] $U_\xi$ fixes $\xi$ and acts sharply transitively%
\footnote{An action of a group on a set is called {\em sharply transitive} (also called {\em simply transitive} or {\em regular})
if it is transitive and free, i.e.\@ for every pair of elements $y,z$ in the set, there is a unique group element mapping $y$ to $z$.}
on $\bd T \setminus \{\xi\}$. 
\end{description}
\smallskip
In other words, this means that if $G$ is boundary-Moufang, then  $G$ and all the root groups are closed, and moreover the tuple  
\[ \MM = (\partial T, (U_\xi)_{\xi \in \partial T}) \]
is a Moufang set in the sense of Tits~\cite{Tits92}. 
 
 
It is clear that if $G$ is boundary-Moufang, then it is boundary-transitive. The following key observation, which follows relatively easily from general results on contraction groups due to Baumgartner--Willis~\cite{BaumgartnerWillis}, shows that the converse holds as soon as some contraction group is closed.
 
\begin{thmintro}\label{thm:ClosedContraction}
Let $G \leq \Aut(T)$ be boundary-transitive.  Assume that for some hyperbolic element $\alpha \in G$, the contraction group $U_\alpha$ is closed.

Then $G$ is boundary-Moufang. Moreover, the system of root groups in $G$ is unique in the following sense: for any conjugacy class $(V_\xi)_{\xi \in \partial T}$ of closed subgroups of $G$ satisfying the Moufang condition, the groups $V_\xi$ are conjugate to $U_\alpha$. 
\end{thmintro}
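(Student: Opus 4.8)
The plan is to extract the root groups directly from the dynamics of hyperbolic elements, using the Baumgartner--Willis structure theory. Fix a hyperbolic element $\alpha \in G$ with attracting and repelling fixed points $\xi_+, \xi_- \in \bd T$. First I would recall that, by Baumgartner--Willis, in a totally disconnected locally compact group the contraction group $U_\alpha$ together with the Levi-type factor $M_\alpha = \{g : \{\alpha^n g \alpha^{-n} : n \in \ZZ\}$ is relatively compact$\}$ and the opposite contraction group $U_{\alpha^{-1}}$ generate a subgroup with a ``Bruhat-type'' structure, and crucially that when $U_\alpha$ is closed one has $U_\alpha \cap M_\alpha = 1$ and $U_\alpha M_\alpha$ is closed. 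The key geometric input is that $U_\alpha$ fixes $\xi_+$ and acts on $\bd T \setminus \{\xi_+\}$; I would show using boundary-transitivity and the closedness of $U_\alpha$ that this action is transitive (the orbit of $\xi_-$ under $U_\alpha$ is the set of ends that can be ``contracted away from $\xi_+$'' together with $\alpha$-translation), and that the stabiliser in $U_\alpha$ of a second end is trivial (an element of $U_\alpha$ fixing two ends fixes the axis between them, hence lies in a compact subgroup, hence in $M_\alpha \cap U_\alpha = 1$). This gives the Moufang condition for $U_{\xi_+} := U_\alpha$, and conjugating by $G$ (which is transitive on $\bd T$, indeed on ordered pairs of ends by the remark quoted from Burger--Mozes) produces a full conjugacy class $(U_\xi)_{\xi \in \bd T}$ of closed subgroups satisfying the Moufang condition. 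I should check that this family is well-defined, i.e.\@ that $U_\alpha$ depends only on $\xi_+$ and not on the choice of hyperbolic $\alpha$ with that attracting point: two such elements differ by an element of the parabolic $G_{\xi_+}$ up to powers, and a standard argument (e.g.\@ via the flat/horocyclic decomposition, or directly: $U_\beta \subseteq G_{\xi_+}$ and $G_{\xi_+}$ acts on $\bd T \setminus \{\xi_+\}$ so by sharp transitivity $U_\beta = U_\alpha$ once both are sharply transitive there) pins it down.

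For the uniqueness statement, suppose $(V_\xi)_{\xi \in \bd T}$ is any conjugacy class of closed subgroups satisfying the Moufang condition. The plan is to identify $V_{\xi_+}$ with $U_\alpha$ by a dynamical argument. Since $V_{\xi_+}$ fixes $\xi_+$ and acts sharply transitively on $\bd T \setminus \{\xi_+\}$, it is a closed cocompact-complement-type subgroup of $G_{\xi_+}$; conjugation by $\alpha$ normalises $G_{\xi_+}$ and I would show it contracts $V_{\xi_+}$ into itself, i.e.\@ $\alpha V_{\xi_+} \alpha^{-1} \subsetneq V_{\xi_+}$, because $\alpha$ attracts towards $\xi_+$ and a nontrivial $v \in V_{\xi_+}$ moves the point $\xi_-$ to some $\eta \neq \xi_-$, while $\alpha^n v \alpha^{-n}$ moves $\xi_-$ to $\alpha^n \eta \to \xi_+$, forcing $\alpha^n v \alpha^{-n} \to 1$ by sharp transitivity and a compactness argument on $G_{\xi_+}$. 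Hence $V_{\xi_+} \subseteq U_\alpha$. For the reverse inclusion, both $V_{\xi_+}$ and $U_\alpha$ act sharply transitively on the same set $\bd T \setminus \{\xi_+\}$, so $|V_{\xi_+}| = |U_\alpha| = |\bd T \setminus \{\xi_+\}|$ as ``sizes'' — more precisely, for any $\eta \neq \xi_+$ the map $v \mapsto v.\eta$ is a bijection $V_{\xi_+} \to \bd T \setminus \{\xi_+\}$ and likewise for $U_\alpha$, so the inclusion $V_{\xi_+} \hookrightarrow U_\alpha$ is a bijection on this orbit, whence $V_{\xi_+} = U_\alpha$. Finally, conjugacy of the whole system follows: any $V_\xi$ is $G$-conjugate to $V_{\xi_+} = U_\alpha$.

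The main obstacle I anticipate is the transitivity of $U_\alpha$ on $\bd T \setminus \{\xi_+\}$ — the Moufang condition is really a statement that the contraction group is as large as possible, and closedness alone does not obviously give this without invoking the Baumgartner--Willis machinery in the form of the tidy-subgroup and ``$U_\alpha M_\alpha U_{\alpha^{-1}}$ is open'' statements together with the transitivity of $G$ on ordered pairs of distinct ends. Concretely, I would argue that $G = G_{\xi_+} \cdot \la \alpha \ra \cdot G_{\xi_+, \xi_-}$-type decompositions together with $G_{\xi_+} = U_\alpha M'$ for an appropriate compact-by-(torus-like) $M'$ force the $U_\alpha$-orbit of $\xi_-$ to be all of $\bd T \setminus \{\xi_+\}$; controlling this orbit, rather than the group-theoretic structure, is where the care is needed. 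The freeness (sharp, not just simple, transitivity) is comparatively easy from the fixed-point/compactness argument above. Everything else is bookkeeping with conjugation and the already-quoted automatic double transitivity of boundary-transitive groups.
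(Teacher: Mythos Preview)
Your overall strategy matches the paper's: use the Baumgartner--Willis decomposition $P_\alpha = (P_\alpha \cap P_{\alpha^{-1}})\cdot U_\alpha$ together with double transitivity on $\bd T$ to see that $U_\alpha$ is transitive on the complement of its fixed end, use $U_\alpha \cap P_{\alpha^{-1}} = 1$ (from closedness) for freeness, conjugate to get the root-group system, and for uniqueness show every candidate root group is contained in the contraction group and then invoke sharp transitivity. So at the level of architecture there is nothing to add.

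There is, however, a genuine error running through the whole write-up: you have the attracting and repelling ends swapped. In the paper's (standard) convention, $U_\alpha = \{g : \alpha^n g\alpha^{-n}\to 1\}$ is contained in $P_\alpha = G_{\xi_-}$, the stabiliser of the \emph{repelling} fixed point (Lemma~\ref{lem:parab}); it acts sharply transitively on $\bd T\setminus\{\xi_-\}$, not on $\bd T\setminus\{\xi_+\}$. This is not merely cosmetic: your uniqueness argument computes, for $v\in V_{\xi_+}$ nontrivial, that $\alpha^n v\alpha^{-n}$ sends $\xi_-$ to $\alpha^n\eta\to\xi_+$, and then concludes $\alpha^n v\alpha^{-n}\to 1$. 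That conclusion is backwards---an element sending $\xi_-$ near $\xi_+$ is far from the identity, not close to it. What is true (and what the paper proves in Lemma~\ref{lem:RootGroups:contraction}) is that the root group at the \emph{repelling} end $\xi_-$ lies in $U_\alpha$: for $u\in V_{\xi_-}$ and $z$ on the axis, $\alpha^{-n}z$ eventually enters the ray toward $\xi_-$ fixed by $u$, whence $\alpha^n u\alpha^{-n}z=z$. Similarly, your freeness argument (``fixes two ends $\Rightarrow$ compact $\Rightarrow$ in $M_\alpha$'') only works as stated when the second end is $\xi_+$, since $M_\alpha = G_{\xi_+,\xi_-}$; this is fine once transitivity is in hand (one point-stabiliser suffices), but you should say so explicitly. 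If you replace $\xi_+$ by $\xi_-$ throughout (equivalently, work with $V_{\xi_-}$ rather than $V_{\xi_+}$), the argument becomes correct and essentially reproduces the paper's proof.
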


We shall moreover see that if $G$ is boundary-Moufang, then its root groups are subgroups of contraction groups. It is however not clear \emph{a priori} that if $G$ is boundary-Moufang, then the root groups coincide with the contraction groups; in particular, it is unclear whether the converse of Theorem~\ref{thm:ClosedContraction} holds.

Once Theorem~\ref{thm:ClosedContraction} is established, a major part of this paper consists in studying boundary-Moufang automorphism groups of $T$, relying on some results from the general theory of Moufang sets. The conclusions of Theorem~\ref{thm:padic} will be established for any boundary-Moufang group having torsion-free root groups (see Theorem~\ref{thm:TorsionFree} below). This provides a complete classification of Moufang sets at infinity of locally finite trees in characteristic~$0$.  

\subsection{Linearity criteria for general locally compact groups}

It turns out that the boundary-transitive groups of tree automorphisms appear quite  naturally in the study of the structure of general  locally compact groups, as illustrated by the following result, see \cite[Theorem~8.1]{CCMT}. 

\begin{thm*}[\cite{CCMT}]\label{thm:CCMT}
Let $G$ be a unimodular locally compact group without non-trivial compact normal subgroup. Assume that for some $\alpha \in G$, the closed subgroup $\overline{\la \alpha U_\alpha\ra}$ is cocompact in $G$. 

Then either $G$ is an almost connected rank one simple Lie group, or $G$ has a continuous, proper, faithful action on a locally finite tree $T$ which is $2$-transitive on $\bd T$. 
\end{thm*}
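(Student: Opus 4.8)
The plan is to recognise the hypothesis as saying that $G$ contains a cocompact \emph{focal} subgroup, to place $G$ within the framework of hyperbolic locally compact groups, and then to run the connected versus totally disconnected dichotomy. Set $P=\overline{\la \alpha U_\alpha\ra}$. The first point is that $\alpha$ is a \emph{compacting} (confining) element of $P$: combining the Baumgartner--Willis description of $U_\alpha$ and $\overline{U_\alpha}$ with the fact that $P$ is topologically generated by $\alpha$ together with $\overline{U_\alpha}$, one produces a compact symmetric subset $\Omega\subseteq P$ with $\alpha\Omega\alpha\inv\subseteq\Omega$ and $\bigcup_{n\ge 0}\alpha^{-n}\Omega\alpha^{n}$ dense in $P$; in particular $P$ is compactly generated. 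Such a $P$ is a \emph{focal hyperbolic} locally compact group in the sense of \cite{CCMT}: it is Gromov-hyperbolic, fixes a unique boundary point $\omega\in\partial P$, its horospherical subgroup $\overline{U_\alpha}$ acts transitively on $\partial P\setminus\{\omega\}$, and $\alpha$ translates towards $\omega$. One may assume $U_\alpha\ne 1$ --- the case $U_\alpha=1$, in which $G$ is virtually cyclic, being degenerate and treated directly --- and then $\overline{U_\alpha}$ is non-compact, since a non-trivial compact group admits no compacting automorphism; hence $P$, and therefore $G$, is non-elementary.

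Since $P$ is cocompact in $G$, the two groups are quasi-isometric, so $G$ is compactly generated and Gromov-hyperbolic, and by the Milnor--\v{S}varc lemma for locally compact groups $G$ acts continuously, properly and cocompactly by isometries on a proper geodesic hyperbolic space $X$, with a $G$-equivariant identification $\partial X\cong\partial P$. Thus $G$ is a non-elementary hyperbolic locally compact group, hence by the trichotomy of \cite{CCMT} it is elementary, focal, or of general type; it is not elementary, and it is not focal, because a focal group is non-unimodular (conjugation by its translating element scales the Haar measure of its non-compact horospherical subgroup, so the modular function of $G$ would be non-trivial on $\alpha$). Therefore $G$ is of general type: its action on $\partial X$ is minimal and has no global fixed point, while $P$ is realised as a cocompact stabiliser of a point at infinity.

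Now split according to the identity component $G^{\circ}$, using the structure theory of hyperbolic locally compact groups of \cite{CCMT}. If $G^{\circ}$ is non-compact, its limit set is a non-empty closed $G$-invariant subset of $\partial X$, hence all of $\partial X$ by minimality; since $G^{\circ}$ is connected, this forces $G^{\circ}$ to be cocompact in $G$, so $G$ is almost connected. By the solution of Hilbert's fifth problem $G$ is then a Lie group with finitely many components; being Gromov-hyperbolic, unimodular and without non-trivial compact normal subgroup, it must be semisimple of real rank one (negatively curved Heintze groups are non-unimodular, and no other connected Lie group is Gromov-hyperbolic), i.e.\ an almost connected rank one simple Lie group. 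Otherwise $G$ is totally disconnected and has a compact open subgroup; in that case the tree is extracted from the self-similar structure of $\alpha$ acting on a compact open subgroup of $\overline{U_\alpha}$: following \cite{CCMT}, $P$ acts on a locally finite tree $T$ with fixed end $\omega$, the subgroup $\overline{U_\alpha}$ acting transitively on $\partial T\setminus\{\omega\}$ and $\alpha$ as a hyperbolic translation; this action extends to a continuous, proper, faithful action $G\le\Aut(T)$ with $\partial T=\partial X$ and compact open vertex stabilisers (faithfulness because the kernel is a compact normal subgroup, hence trivial). Transitivity of $\overline{U_\alpha}$ on $\partial T\setminus\{\omega\}$ together with the absence of a global fixed end yields that $G$ is transitive on $\partial T$, and then it is automatically $2$-transitive by \cite[Lemma~3.1.1]{BurgerMozes:trees1}, $G$ being a closed non-compact subgroup of $\Aut(T)$.

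The step I expect to be the main obstacle is the totally disconnected case: converting the abstract compacting/focal data into an honest locally finite tree with the stated properties requires the detailed analysis of contracting automorphisms of totally disconnected groups and their self-similar decompositions, which is the technical core of \cite{CCMT}. By comparison, the connected case rests on the classical classification of connected Gromov-hyperbolic Lie groups, and the preliminary reductions --- identifying $P$ as focal and ruling out focality of $G$ via unimodularity --- are short.
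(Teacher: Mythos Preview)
The paper does not prove this statement: it is quoted verbatim from \cite[Theorem~8.1]{CCMT} and used as a black box to derive Corollary~\ref{cor:Hil5}. There is therefore nothing in the present paper to compare your proposal against.

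That said, your outline is a faithful summary of the strategy actually carried out in \cite{CCMT}: identify $P=\overline{\la \alpha U_\alpha\ra}$ as a focal hyperbolic group, pass hyperbolicity to $G$ by cocompactness, rule out the focal case for $G$ by unimodularity, and then split on whether $G^\circ$ is compact (hence trivial by hypothesis, so $G$ is totally disconnected) or not (the almost connected Lie case). The one place where your sketch is thinner than the actual argument is the sentence ``this action extends to a continuous, proper, faithful action $G\le\Aut(T)$'': in \cite{CCMT} the tree is not built for $P$ and then extended to $G$, but is constructed directly from the $G$-action on the hyperbolic space $X$ together with a tidy compact open subgroup, and this is indeed, as you anticipate, the technical heart of the totally disconnected case.
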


Combining the theorem above with the results proved in this paper, we deduce the following, where by definition, the \textbf{parabolic subgroup} associated to $\alpha \in G$ is  defined as 
\[
P_\alpha = \{ g \in G \mid (\alpha^n g \alpha^{-n})_{n \geq 0} \text{ is bounded}\}.
\]

\begin{corintro}\label{cor:Hil5}
Let $G$ be a unimodular locally compact group without non-trivial compact normal subgroup. Assume that for some $\alpha \in G$, the closed subgroup $\overline{\la \alpha U_\alpha\ra}$ is cocompact in $G$. 

Then the following holds.

\begin{enumerate}[\rm (i)]
\item If the parabolic group $P_\alpha$ is metabelian, then there is a non-discrete locally compact field $k$ and a continuous embedding $\PSL_2(k) \leq G \leq \PGL_2(k)$. 

\item 
If the contraction group $U_\alpha$ is closed and torsion-free, then there is a non-discrete locally compact field $k$ of characteristic $0$,  a semisimple algebraic $k$-group $\mathbf G$ of $k$-rank one and a continuous open homomorphism $\mathbf G(k) \to G$ whose image $G^+$ is a characteristic subgroup of finite index.   
\end{enumerate}
\end{corintro}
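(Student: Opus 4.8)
The plan is to reduce the corollary to the three main theorems of this paper by first applying the structure theorem of \cite{CCMT} quoted above. Note first that $G$ is non-compact: a non-trivial compact $G$ would be its own compact normal subgroup, contradicting the hypothesis. Moreover $\alpha$ must generate a non-relatively-compact subgroup, for otherwise its contraction group would be trivial and $\overline{\langle \alpha U_\alpha\rangle}=\overline{\langle\alpha\rangle}$ would be compact, hence cocompact forces $G$ compact, again excluded. Feeding the cocompactness hypothesis into \cite{CCMT} then gives a dichotomy: either $G$ is an almost connected rank one simple Lie group, or $G$ acts continuously, properly and faithfully on a locally finite tree $T$, $2$-transitively on $\partial T$. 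I would treat these two alternatives in turn, the tree one via Theorems~\ref{thm:MetabelianStab} and~\ref{thm:padic}, the Lie one by direct inspection.

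In the tree alternative, the faithful proper continuous action realises $G$ as a closed non-compact subgroup of $\Aut(T)$ acting transitively on $\partial T$, and non-relative-compactness of $\alpha$ makes $\alpha$ hyperbolic on $T$ (an elliptic element has trivial contraction group). I would then identify $P_\alpha$ with the stabiliser $G_\xi$ of one of the two ends $\xi$ fixed by $\alpha$, and $U_\alpha$ with the associated contraction group, modelled on $G=\PGL_2(k)$ where $P_\alpha$ is the Borel subgroup fixing $\xi$ and $U_\alpha$ its unipotent radical. With these identifications, hypothesis~(i) says $G_\xi$ is metabelian, so Theorem~\ref{thm:MetabelianStab} produces a non-Archimedean local field $k$ with $\PSL_2(k)\le G\le\PGL_2(k)$; and hypothesis~(ii) says $U_\alpha$ is closed and torsion-free, so Theorem~\ref{thm:padic} produces the $p$-adic field $k$, the rank one $k$-group $\mathbf G$ and the open homomorphism $\mathbf G(k)\to G$ with the stated properties. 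This disposes of both parts in the tree alternative.

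In the Lie alternative, the inclusion $\overline{\langle\alpha U_\alpha\rangle}\subseteq P_\alpha$ (valid in any locally compact group, since $\alpha\in P_\alpha$ and $U_\alpha\subseteq P_\alpha$) shows $P_\alpha$ is cocompact, so $G/P_\alpha$ is compact; in a rank one simple Lie group this forces $P_\alpha$ to be a minimal parabolic $MAN$ and the centre of $G$ to be finite, whence $G$ is centre-free by the no-compact-normal-subgroup hypothesis and is the group of real points of an adjoint semisimple $\mathbb R$-group of $\mathbb R$-rank one, with $U_\alpha=N$ the nilradical. For part~(ii) the hypothesis on $U_\alpha$ is then automatic, as $N$ is simply connected nilpotent, hence closed and (via the exponential diffeomorphism) torsion-free; the conclusion is the standard algebraic description of $G$, taking $k=\mathbb R$, or $k=\mathbb C$ after restriction of scalars in the complex type, and $\mathbf G(k)\to G$ the isogeny onto $G^+$. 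For part~(i) I would classify when $MAN$ is metabelian: this needs $[P,P]\subseteq N$ to be abelian, which fails for every Heisenberg-type nilradical (the cases $\mathrm{SU}(n,1)$, $\mathrm{Sp}(n,1)$, $F_4^{-20}$, where the two-eigenvalue action of $A$ on $N$ forces $[P,P]=N$ non-abelian) and also when $M=\mathrm{SO}(n-1)$ is non-abelian; the survivors are the real hyperbolic groups $\mathrm{SO}(n,1)^\circ$ with $n\in\{2,3\}$, i.e.\ $\PSL_2(\mathbb R)$ with $k=\mathbb R$ and $\PSL_2(\mathbb C)$ with $k=\mathbb C$, for which $\PSL_2(k)\le G\le\PGL_2(k)$.

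The conceptual core being already contained in Theorems~\ref{thm:MetabelianStab} and~\ref{thm:padic}, the only genuinely new work lies in the Lie alternative, and there the delicate point is part~(i): one must check that metabelianity of the \emph{full} minimal parabolic $MAN$, not merely of its nilradical, pins down exactly the two low-dimensional real hyperbolic groups. The subtlety is that a Heisenberg nilradical is itself metabelian, so abelianity of $N$ cannot be detected from $N$ alone; it is the interaction of $A$ with $N$ through the positive root acting with two distinct weights that makes $[P,P]$ fill out the whole non-abelian $N$, and this is the computation requiring care. By contrast, the identification $P_\alpha=G_\xi$ in the tree case and the bookkeeping of the isogeny $\mathbf G(k)\to G^+$ in the Lie case are routine, but must be stated precisely to match the form of the conclusion.
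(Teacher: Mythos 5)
Your proposal follows exactly the route the paper intends (the paper offers no written proof beyond ``combining the theorem above with the results proved in this paper''): apply the quoted theorem of \cite{CCMT} to get the Lie/tree dichotomy, settle the tree branch by Lemma~\ref{lem:parab} together with Theorems~\ref{thm:MetabelianStab} and~\ref{thm:padic}, and settle the Lie branch by inspecting rank one simple Lie groups. The tree branch of your argument is complete and correct.

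Two points in the Lie branch need tightening, though neither affects the overall strategy. First, ``$P_\alpha$ is closed and cocompact, hence a minimal parabolic'' is not a valid inference as stated: cocompact lattices are also closed cocompact subgroups. The correct route is to note that $U_\alpha\neq 1$ (otherwise $\overline{\la\alpha\ra}$ would be an abelian cocompact subgroup of a non-amenable group), so the hyperbolic part of the Jordan decomposition of $\alpha$ is non-trivial; in rank one this forces $\alpha$ to be conjugate into $MA$, whence $P_\alpha$ is the stabiliser of the repelling boundary point, i.e.\ $P_\alpha\cap G^\circ=MAN$ and $P_\alpha$ meets every component of $G$. Second, in part (i) with $k=\CC$ your conclusion $G\leq\PGL_2(\CC)=\PSL_2(\CC)$ requires excluding the possibility that $G$ contains complex conjugation $\sigma$ (which lies in $\Aut(\PSL_2(\CC))$ but not in $\PGL_2(\CC)$); your writeup only pins down $G^\circ$. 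This is where the metabelian hypothesis on the \emph{full} group $P_\alpha$ must be used again: since $P_\alpha$ surjects onto $G/G^\circ$, if $\sigma\in G$ then $P_\alpha$ contains an element of $\sigma G^\circ$, and then $[P_\alpha,P_\alpha]$ contains $[\sigma,M]\cdot N=MN$, which is non-abelian. Both repairs stay within your framework, so the proof is correct once these are inserted.
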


Notice that the converse statements of Corollary~\ref{cor:Hil5} also hold, namely the projective groups over $k$, or the rank one simple groups over $p$-adic fields, satisfy the algebraic conditions appearing in the statement. Thus Corollary~\ref{cor:Hil5} provides an algebraic/topological characterization of those groups within the category of locally compact groups, in the spirit of Hilbert's fifth problem.

\subsection*{Acknowledgement} We thank both referees, whose comments and suggestions were greatly appreciated, and improved the exposition of the results.

\section{From closed contraction groups to Moufang sets}


\subsection{Transitivity implies double-transitivity}
 
The following basic rigidity result can be found as Lemma~3.1.1 in \cite{BurgerMozes:trees1}; it can be generalised to isometry groups of Gromov hyperbolic metric spaces, see \cite[Theorem~8.1]{CCMT}.

\begin{prop}\label{prop:2trans}
Let $G \leq \Aut(T)$ be closed non-compact subgroup. If the $G$-action is transitive on $\bd T$, then it is doubly transitive.  In particular $G$  is compactly generated.  \qed
\end{prop}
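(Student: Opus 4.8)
The plan is to prove the two assertions separately, obtaining compact generation as a consequence of double transitivity. Recall first the standard reduction: the $G$-action on $\bd T$ is doubly transitive if and only if it is transitive and, for one (hence every) $\xi\in\bd T$, the end-stabiliser $G_\xi$ acts transitively on $\bd T\smallsetminus\{\xi\}$. (If $T$ is a bi-infinite line this set is a single point and there is nothing to prove, so I will tacitly assume $\bd T$ is infinite — equivalently, $T$ has a vertex of valency $\ge 3$ — in which case $\bd T$ is a Cantor set.) The first genuine step is to produce a hyperbolic element in $G$. Since $T$ is locally finite, $\Aut(T)$ acts properly on $T$, so a closed subgroup with a bounded orbit on $T$ is compact; as $G$ is non-compact it has no bounded orbit, and in particular fixes no vertex and stabilises no edge. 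Transitivity on $\bd T$ (which has at least two points, $T$ having no leaf) forbids $G$ from fixing an end. By Tits' classification of groups of tree isometries (see e.g.\ Serre's \emph{Trees}), a subgroup of $\Aut(T)$ containing no hyperbolic element must fix a vertex, invert an edge, or fix an end; hence $G$ contains a hyperbolic element $g$, with distinct attracting and repelling endpoints $g^+,g^-\in\bd T$.

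Fix $\xi\in\bd T$; using transitivity of $G$ I would conjugate $g$ so that $g^+=\xi$, and set $\zeta:=g^-$, so that $g\in G_\xi\cap G_\zeta$. To show $G_\xi$ is transitive on $\bd T\smallsetminus\{\xi\}$ it suffices to show that every $\eta\in\bd T\smallsetminus\{\xi\}$ lies in the $G_\xi$-orbit of $\zeta$. Given such $\eta$ (necessarily with $\eta\ne\zeta$), transitivity of $G$ yields $a\in G$ with $a\zeta=\eta$; if $a\xi=\xi$ we are done, so assume $\xi':=a\xi\ne\xi$. One must now "correct" $a$ on the left by an element of $G_\eta$ carrying $\xi'$ to $\xi$: the relevant tools are the North–South dynamics on $\bd T$ of the hyperbolic conjugate $aga^{-1}\in G_\eta$ (whose endpoints are $\eta$ and $\xi'$) and of $g$ itself (fixing $\xi$ and $\zeta$), combined with the facts that $G$ is closed and that the vertex stabilisers $G_v$ are compact and open. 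The argument amounts to showing that the $G_\xi$-orbit of $\zeta$ is at once dense in and closed in $\bd T\smallsetminus\{\xi\}$; this is exactly the content of \cite[Lemma~3.1.1]{BurgerMozes:trees1}, generalised in \cite[Theorem~8.1]{CCMT}. I expect this limiting step to be the main obstacle: naively pre- or post-composing $a$ with powers of $g$ or of $aga^{-1}$ fails, because these elements fix the ends one is trying to move, and one really needs to feed the local finiteness of $T$ into the dynamics in order to produce a sequence convergent in the compact-open topology whose limit lies in $G_\xi$ and sends $\zeta$ to $\eta$.

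For the final assertion, once double transitivity is established, $G$ acts transitively on the set of bi-infinite geodesic lines of $T$ (a line being the same datum as an unordered pair of distinct ends). Since $T$ has no leaf, every vertex lies on some line, so every $G$-orbit of vertices meets the vertex set of one fixed line $L_0$. The setwise stabiliser $\Stab_G(L_0)$ contains a conjugate of $g$ with axis $L_0$, which acts on the vertices of $L_0\cong\ZZ$ by a non-trivial translation, hence with finitely many orbits; therefore $G$ has finitely many orbits of vertices, i.e.\ acts cocompactly on $T$. As $G$ is closed in $\Aut(T)$, the stabilisers $G_v$ are compact and open, and a group acting cocompactly on a connected locally finite graph with compact open vertex stabilisers is compactly generated (take a finite connected subgraph meeting every vertex orbit; the stabilisers of its vertices together with finitely many elements translating its boundary edges generate $G$). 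Hence $G$ is compactly generated.
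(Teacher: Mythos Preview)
Your proposal is correct and follows essentially the same route as the paper: the paper simply cites \cite[Lemma~3.1.1]{BurgerMozes:trees1} (and \cite[Theorem~8.1]{CCMT}) for double transitivity, and your key step defers to exactly these references after a correct reduction and production of a hyperbolic element. Your argument for compact generation via cocompactness is correct and matches the observation the paper records immediately after the proposition.
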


Recall our convention that $T$ has no vertex of valency one. Observe that if $G$ is doubly transitive on $\bd T$, then the $G$-action on $T$ is cocompact, and any geodesic path joining two vertices of valency~$>2$ at minimal distance is a fundamental domain. In other words, upon discarding vertices of valency~$2$ in $T$, the $G$-action is edge-transitive.

\subsection{Structure of boundary-transitive groups}

We define the  \textbf{monolith} of a topological group to be the (possibly trivial) intersection of all its non-trivial closed normal subgroups. A subgroup $G \leq \Aut(T)$ acting transitively on $\bd T$ is called \textbf{boundary-transitive}. 
The following result can be extracted from the work of M.~Burger and Sh.~Mozes~\cite{BurgerMozes:trees1}.

\begin{thm}\label{thm:BurgerMozes}
Let $G \leq \Aut(T)$ be closed non-compact subgroup which is boundary-transitive.

Then the 
monolith $G^{(\infty)}$ of $G$ is compactly generated and  topologically simple. Moreover it acts 
doubly-transitively on $\bd T$. 
\end{thm}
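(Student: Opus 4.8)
The plan is to establish Theorem~\ref{thm:BurgerMozes} by combining Proposition~\ref{prop:2trans} with the general normal-subgroup structure theory for boundary-transitive tree automorphism groups in the style of Burger--Mozes. First I would observe that, by Proposition~\ref{prop:2trans}, the hypotheses already give that $G$ acts doubly transitively on $\bd T$ and is compactly generated. The key input is the classical dichotomy (Burger--Mozes~\cite{BurgerMozes:trees1}) for a closed non-compact $N \trianglelefteq G$: either $N$ fixes an end (or stabilises a vertex or a geodesic line, hence is compact or has a bounded orbit — excluded here), or $N$ acts minimally on $T$ without fixed end, in which case the contraction/ping-pong dynamics force $N$ to already act doubly transitively on $\bd T$. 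The first case I would rule out using double transitivity of $G$: if $N$ is non-trivial closed normal and fixes an end $\xi$, then since $G$ permutes $\bd T$ transitively, the conjugates of $N$ fix \emph{every} end, so $N$ fixes $\bd T$ pointwise, hence acts trivially on $T$ (as $T$ has no valency-one vertices and is spanned by its ends) — contradiction. Thus every non-trivial closed normal subgroup of $G$ acts minimally and boundary-$2$-transitively.

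Next I would show the monolith $G^{(\infty)} = \bigcap\{N : 1 \neq N \trianglelefteq G \text{ closed}\}$ is itself non-trivial and boundary-transitive. The idea: take two non-trivial closed normal subgroups $M, N$; by the previous paragraph each acts minimally on $T$, and minimally-acting normal subgroups of a group acting on a tree cannot be "disjoint" in their action — in fact the commutator $[M,N]$ is again closed normal, and if it were trivial then $M$ centralises $N$, but a non-trivial group acting minimally on a tree without fixed end has trivial centraliser in $\Aut(T)$ (the centraliser would preserve the minimal subtree and the "axis data", forcing it to fix the ends $N$ moves). Hence $[M,N] \neq 1$ and $[M,N] \le M \cap N$, so the family of non-trivial closed normal subgroups is "downward directed" up to the operation of passing to commutators; a standard compactness/minimality argument (each such $N$ acts $2$-transitively on the compact set $\bd T$, and a decreasing net of such closed subgroups has non-trivial closed intersection because the "displacement function" stays bounded below) then yields that $G^{(\infty)}$ is non-trivial, closed, normal, and inherits minimal, boundary-$2$-transitive action.

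Then topological simplicity of $G^{(\infty)}$ follows formally: any non-trivial closed normal subgroup $K$ of $G^{(\infty)}$ is, by the above rigidity applied inside $G^{(\infty)}$ (which is itself closed non-compact boundary-transitive), again minimal and boundary-$2$-transitive; one checks $K$ is normalised by all of $G$ — here I would use that $G^{(\infty)}$ is characteristic in $G$ and that the boundary-$2$-transitivity pins down $K$ as containing the contraction groups, which generate $G^{(\infty)}$ — hence $K \trianglelefteq G$, so $K \supseteq G^{(\infty)}$, giving $K = G^{(\infty)}$. Finally, compact generation of $G^{(\infty)}$: a closed non-compact boundary-transitive subgroup is compactly generated by Proposition~\ref{prop:2trans}, and this applies verbatim to $G^{(\infty)}$ once we know it is closed, non-compact, and boundary-transitive.

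The main obstacle I anticipate is the passage to the \emph{intersection} of all non-trivial closed normal subgroups, i.e.\@ showing the monolith is non-trivial rather than merely that any two non-trivial closed normals intersect non-trivially. The clean way around this is the commutator trick together with a compactness argument on the space of closed subgroups (Chabauty topology): the non-trivial closed normal subgroups, ordered by the partial order generated by inclusion and "contains $[\cdot,\cdot]$", form a filtered family whose members all have a uniform lower bound on their translation length spectrum (because each acts minimally on the \emph{same} tree $T$ with $\bd T$ compact), so the Chabauty limit is non-trivial and normal; that limit is $G^{(\infty)}$. I would make sure to cite the precise lemmas of \cite{BurgerMozes:trees1} that provide the minimality dichotomy and the rigidity of centralisers, so that steps two and three become bookkeeping rather than new arguments.
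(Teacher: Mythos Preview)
The paper's own proof is entirely by citation to \cite{BurgerMozes:trees1} (Lemma~3.1.1 for local $\infty$-transitivity and double transitivity, Proposition~3.1.2 for topological simplicity of the monolith, Proposition~1.2.1 for cocompactness of $G^{(\infty)}$ in $G$, whence compact generation). Your proposal instead attempts to reconstruct the Burger--Mozes argument from first principles. The first two paragraphs (normal subgroups have no fixed end, hence act minimally and $2$-transitively on $\bd T$; the commutator of two such subgroups is non-trivial via the trivial-centraliser observation) are correct and match the standard line of reasoning. However, two later steps have genuine gaps.

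\textbf{Non-triviality of the monolith.} Your Chabauty argument is not sound as written. The family of non-trivial closed normal subgroups is filtered under the ``contains a common non-trivial closed normal subgroup'' relation (via commutators), but a Chabauty limit of a filtered family is not in general the intersection, and the phrase ``uniform lower bound on the translation length spectrum'' does not by itself prevent the intersection from being trivial: every hyperbolic element in \emph{every} closed subgroup of $\Aut(T)$ has translation length $\geq 1$, so this bound carries no information. The argument that actually works (and is what underlies \cite[Proposition~1.2.1]{BurgerMozes:trees1}) is that every non-trivial closed normal subgroup $N$ is \emph{cocompact} in $G$: since $N$ is edge-transitive on $T$ one has $G = N \cdot G_v$ for any vertex $v$, with $G_v$ compact. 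Cocompactness, together with the filtered property and second countability, lets one pass to a countable descending chain of closed cocompact normal subgroups whose intersection is still cocompact (hence non-trivial) and contained in every non-trivial closed normal subgroup. You should replace the Chabauty sketch with this.

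\textbf{Topological simplicity.} Your justification that a non-trivial closed $K \trianglelefteq G^{(\infty)}$ is automatically normal in $G$ is incorrect: ``$G^{(\infty)}$ is characteristic in $G$'' does \emph{not} imply that normal subgroups of $G^{(\infty)}$ are normal in $G$. Your attempted repair (``$K$ contains the contraction groups, which generate $G^{(\infty)}$'') would indeed give $K = G^{(\infty)}$ directly, but you have not shown either assertion: you have not established that contraction groups of hyperbolic elements lie in $K$ (the argument of Proposition~\ref{prop:LittleProjMonolith} gives $U_\alpha \subseteq K$ only for $\alpha \in K$, and only the contraction group computed \emph{in} $G^{(\infty)}$), nor that such contraction groups generate $G^{(\infty)}$ in the absence of a Moufang hypothesis. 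The correct route is again via cocompactness: apply the same reasoning as above with $G^{(\infty)}$ in place of $G$ to see that $K$ is cocompact in $G^{(\infty)}$; then the intersection $\bigcap_{g \in G} gKg^{-1}$ is a closed normal subgroup of $G$, and one shows it is non-trivial using that $G/G^{(\infty)}$ is compact together with the filtered property inside $G^{(\infty)}$. This forces $G^{(\infty)} \leq K$.
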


\begin{proof}
The fact that $G^{(\infty)}$ is topologically simple is in \cite[Proposition 3.1.2]{BurgerMozes:trees1}; note that the condition of being locally $\infty$-transitive follows from \cite[Lemma~3.1.1]{BurgerMozes:trees1}.
By \cite[Proposition 1.2.1]{BurgerMozes:trees1}, $G / G^{(\infty)}$ is compact, and hence it is compactly generated.
The fact that $G^{(\infty)}$ acts 
doubly transitively on $\bd T$ also follows from \cite[Lemma~3.1.1 and Proposition~3.1.2]{BurgerMozes:trees1}.
\end{proof}

\subsection{Relation to parabolic subgroups}

Recall from the introduction that to each element $\alpha$ of a totally disconnected locally compact group $G$, one associates a \textbf{parabolic subgroup} defined by 
\[
P_\alpha = \{ g \in G \mid (\alpha^n g \alpha^{-n})_{n \geq 0} \text{ is bounded}\}.
\]
Clearly we have $U_\alpha \leq P_\alpha$. Moreover, it follows from \cite[Proposition~3]{Willis94} that $P_\alpha$ is closed in $G$. In our setting, we record the following geometric interpretation of parabolic subgroups. 

\begin{lem}\label{lem:parab}
Let $G \leq \Aut(T)$ be closed and boundary-transitive, and $\alpha, \beta \in G$ be  hyperbolic elements having the same  repelling fixed point  $\xi \in \bd T$. Then we have $U_\alpha = U_\beta$ and  $P_\alpha = P_\beta = G_\xi$. 
\end{lem}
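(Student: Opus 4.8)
The plan is to prove the geometric identification $P_\alpha = G_\xi$ first, and then deduce the equalities $U_\alpha = U_\beta$ and $P_\alpha = P_\beta$ from it, since the right-hand side $G_\xi$ depends only on $\xi$ and not on the particular hyperbolic element with repelling point $\xi$. So the first step is: fix a geodesic line $L$ in $T$ whose negative end is $\xi$ and whose positive end is the attracting fixed point $\eta$ of $\alpha$. After replacing $\alpha$ by a power we may assume $\alpha$ translates along $L$ by some amount $\ell > 0$; this does not change $P_\alpha$ or $U_\alpha$ (both are insensitive to passing to nonzero powers, which is either standard or can be checked directly from the defining limits). Then for $g \in G$ I would analyse the sequence $(\alpha^n g \alpha^{-n})$ by tracking its effect on a fixed base vertex $v \in L$: writing $d$ for the distance in $T$, the point $\alpha^{-n} v$ runs off to $\xi$ along $L$, and boundedness of $(\alpha^n g \alpha^{-n})_{n\ge 0}$ is equivalent to boundedness of $d\big(v,\, \alpha^n g \alpha^{-n} v\big) = d\big(\alpha^{-n}v,\, g\alpha^{-n}v\big)$.

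The key point is then the following dichotomy for $g \in G$: either $g$ fixes $\xi$, or $g\xi \neq \xi$. If $g\xi \neq \xi$, then $g$ maps the ray from $v$ toward $\xi$ eventually off that ray, so $d(\alpha^{-n}v, g\alpha^{-n}v)$ grows linearly in $n$ (roughly $2(n\ell - c)$ once $\alpha^{-n}v$ is past the branch point of the two rays $[v,\xi)$ and $[v, g^{-1}\cdot g\xi)$), hence $g \notin P_\alpha$. Conversely, if $g\xi = \xi$, then $g$ maps the ray $[w,\xi)$ isometrically onto $[gw,\xi)$ for $w$ far enough out, and these two rays share a common subray toward $\xi$; so for large $n$ the points $\alpha^{-n}v$ and $g\alpha^{-n}v$ both lie on this common subray and $d(\alpha^{-n}v, g\alpha^{-n}v)$ stabilises to the constant value $|\,(\text{horocyclic displacement of }g)\,|$ — in particular it is bounded, so $g \in P_\alpha$. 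This establishes $P_\alpha = G_\xi$. The same argument applied to $\beta$ (which has the same repelling point $\xi$) gives $P_\beta = G_\xi$, hence $P_\alpha = P_\beta = G_\xi$.

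For the contraction groups, I would argue similarly but now requiring $\alpha^n g \alpha^{-n} \to 1$ rather than merely bounded. From the computation above, for $g \in G_\xi$ the element $\alpha^n g \alpha^{-n}$ acts on the common subray toward $\xi$ by a horocyclic translation whose displacement equals that of $g$; so $\alpha^n g \alpha^{-n} \to 1$ forces this displacement to be $0$, i.e. $g$ fixes a horosphere centred at $\xi$, and then one checks that $g \in U_\alpha$ iff additionally $\alpha^n g \alpha^{-n}$ fixes larger and larger balls around $v$, which translates into a condition on how $g$ acts on the horospheres centred at $\xi$ that makes no reference to $\alpha$ beyond the fact that $\alpha$ contracts toward $\xi$. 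Since $\beta$ also contracts toward $\xi$ (its repelling point), the same characterisation holds for $U_\beta$, whence $U_\alpha = U_\beta$. The main obstacle I anticipate is making the ``no reference to $\alpha$'' claim fully precise: one must verify that the characterisation of $U_\alpha$ among elements of $G_\xi$ — namely that $g$ fixes arbitrarily large subsets of each horosphere centred at $\xi$, or equivalently that $g$ lies in the intersection of the pointwise stabilisers of the horoballs — genuinely depends only on $\xi$ and not on the translation length or axis of $\alpha$. Because $\beta$ likewise translates some geodesic ray toward $\xi$, one can choose a common subray on which both $\alpha$ and $\beta$ act as honest translations toward $\xi$, and then the conjugation dynamics of the two elements on that subray are comparable up to passing to powers, which is harmless for computing $U$ and $P$; carrying this comparison out carefully is the one place where a genuine (though short) argument is needed rather than a routine limit computation.
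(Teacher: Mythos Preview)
Your proposal is correct and follows essentially the same route as the paper: show $P_\alpha = G_\xi$ by analysing the displacement $d(\alpha^{-n}v,\, g\alpha^{-n}v)$ along a ray to $\xi$, and then deduce $U_\alpha = U_\beta$ by arguing that membership in $U_\alpha$ has a geometric characterisation depending only on $\xi$. The paper dispatches the step you flag as the ``main obstacle'' more directly than your common-subray comparison: it simply observes that $g \in U_\alpha$ if and only if $g$ fixes pointwise some ray $\rho$ toward $\xi$ together with balls $B(\rho(n), r_n)$ with $r_n \to \infty$, a condition visibly independent of $\alpha$.
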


\begin{proof}
 Let $\ell \subset T$ be the $\alpha$-axis and $\ell_- \subset \ell$ be a ray pointing to $\xi$. Observe that for each $x \in \ell$ we have $\alpha^{-n}x \in \ell_-$ for all sufficiently large $n\geq 0$. Therefore, given $g \in G$, the sequence $(\alpha^n g \alpha^{-n})_{n \geq 0}$ is bounded in $G$ if and only if the displacement function $x \mapsto d(x, g.x) $ is bounded on $\ell_-$. This in turn is equivalent to the assertion that $g$ maps $\ell_-$ at bounded Hausdorff distance from itself or, equivalently, that $g$ fixes $\xi$, thereby showing that $P_\alpha = G_\xi$.

The equality  $P_\alpha = P_\beta$ is now immediate. To check that  $U_\alpha = U_\beta$, observe that an element $u \in G$ belongs to $U_\alpha$ if and only if $u$ fixes pointwise a ray $\rho$ pointing to $\xi$ and, for each $n$, a ball $B(\rho(n), r_n)$ such that $\lim_n r_n =\infty$. Since the latter geometric description is independent of $\alpha$, the equality $U_\alpha = U_\beta$ follows.
\end{proof}

\subsection{The closure of a contraction group}

The following result is due to U.~Baumgartner and G.~Willis \cite{BaumgartnerWillis}. 

\begin{thm}\label{thm:BaumWillis}
Let $G$ be a second countable totally disconnected locally compact group and let $\alpha \in G$. Let $U_\alpha$ be the associated contraction group, let $P_{\alpha\inv}$ be the parabolic subgroup associated to $\alpha\inv$. Then we have the following. 

\begin{enumerate}[\rm (i)]
\item The closure of $U_\alpha$ in $G$ coincides with 
$
\overline{U_\alpha} = (\overline{U_\alpha} \cap P_{\alpha\inv} ) \cdot U_\alpha.
$ 
\item $U_\alpha$ is closed in $G$ if and only if $\overline{U_\alpha} \cap P_{\alpha\inv}= 1$. 

\item $P_\alpha = (P_\alpha \cap P_{\alpha\inv}) \cdot U_\alpha$. 
\end{enumerate}
\end{thm}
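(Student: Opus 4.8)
The plan is to reduce everything to the structure theory of tidy subgroups for the automorphism $x \mapsto \alpha x \alpha\inv$ of $G$, in the spirit of Willis' original work and its refinement by Baumgartner--Willis. First I would fix a compact open subgroup $V \le G$ that is \emph{tidy} for $\alpha$, so that $V = V_+ V_-$ where $V_+ = \bigcap_{n \ge 0} \alpha^n V \alpha^{-n}$ and $V_- = \bigcap_{n \ge 0} \alpha^{-n} V \alpha^n$, and such that $V_{++} := \bigcup_{n \ge 0} \alpha^n V_+ \alpha^{-n}$ is closed. The key elementary facts I would record are: $V_{++} \le U_\alpha \le P_\alpha$, that $V_{++}$ is normalised by $\alpha$, that $P_\alpha = V_- \cdot V_{++}$ (equivalently $P_\alpha$ is the set of $g$ whose forward $\alpha$-orbit stays in a compact set, which is controlled by $V_-$ together with the ``expanding part'' $V_{++}$), and the dual statements for $\alpha\inv$. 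These are exactly the statements in Willis' tidiness papers and in \cite{BaumgartnerWillis}, so in the writeup I would cite them rather than reprove them.

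For part (i), the inclusion $\supseteq$ is immediate since $U_\alpha$ is a subgroup containing itself and $\overline{U_\alpha} \cap P_{\alpha\inv}$ is a subset of $\overline{U_\alpha}$. For $\subseteq$, take $g \in \overline{U_\alpha}$. Using that $V_{++} \le U_\alpha$ and that $V$ is tidy, one shows $\overline{U_\alpha} = \overline{V_{++}}\cdot U_\alpha$ is already a consequence of $U_\alpha = V_{++}\cdot(U_\alpha \cap V_-)$-type decompositions; the point is then that $\overline{V_{++}} = V_{++}\cdot(\overline{V_{++}} \cap V_-)$ and that $\overline{V_{++}} \cap V_- \subseteq P_{\alpha\inv}$ because elements of $V_-$ have bounded backward $\alpha$-orbit. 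Combining, every $g \in \overline{U_\alpha}$ factors as (something in $\overline{U_\alpha}\cap P_{\alpha\inv}$) times (something in $U_\alpha$), which is the claimed identity. Part (ii) is then a one-line corollary: if $U_\alpha$ is closed then $\overline{U_\alpha} = U_\alpha$ and, since $U_\alpha \cap P_{\alpha\inv} = 1$ (an element in the contraction group of $\alpha$ whose forward $\alpha$-orbit under conjugation converges to $1$ and whose backward orbit is bounded must be trivial, by a standard compactness/limit argument), we get $\overline{U_\alpha}\cap P_{\alpha\inv}=1$; conversely if $\overline{U_\alpha}\cap P_{\alpha\inv}=1$ then part~(i) gives $\overline{U_\alpha} = U_\alpha$.

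For part (iii), I would argue that $P_\alpha \cap P_{\alpha\inv}$ is exactly the subgroup of elements with bounded \emph{two-sided} $\alpha$-orbit, which for a tidy $V$ is $V_+ \cap V_-$ up to the relevant closures, i.e.\ it plays the role of the ``Levi'' part, while $U_\alpha = V_{++}\cdot(U_\alpha \cap (P_\alpha\cap P_{\alpha\inv}))$ — no, more cleanly: every $g \in P_\alpha$ has, by definition, bounded forward orbit, hence lies in a set of the form $V_- \cdot V_{++}$; writing $g = v\, w$ with $v$ in the bounded-two-sided part and $w \in V_{++} \le U_\alpha$, and absorbing, gives $P_\alpha = (P_\alpha \cap P_{\alpha\inv})\cdot U_\alpha$. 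The main obstacle I anticipate is not conceptual but bookkeeping: making the decompositions $P_\alpha = V_- V_{++}$ and $\overline{U_\alpha} = \overline{V_{++}}\cdot(\text{bounded part})$ precise requires care about which groups are actually subgroups versus merely products of subsets, and about the interaction of closures with these products. Since \cite{BaumgartnerWillis} establishes precisely these structural decompositions, the honest writeup consists in quoting their main theorem and deducing (i)--(iii) as above; I would flag that this is what is being done rather than pretending to give an independent proof.
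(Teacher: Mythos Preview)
Your proposal is correct and coincides with the paper's approach: the paper's proof of this theorem is nothing more than precise citations to \cite{BaumgartnerWillis} (Lemma~3.29 and Corollary~3.30 for (i), Theorem~3.32 for (ii), Corollary~3.17 for (iii)), and you reach the same conclusion, namely that the honest writeup is to quote Baumgartner--Willis rather than reprove their structural decompositions. Your additional sketch of the tidy-subgroup machinery is accurate in spirit, though some of the bookkeeping (e.g.\ which of $V_{++}$, $V_{--}$ sits inside $U_\alpha$ versus $U_{\alpha^{-1}}$, and the justification that $U_\alpha \cap P_{\alpha^{-1}} = 1$) would need to be pinned down with the exact conventions of \cite{BaumgartnerWillis}; you correctly anticipate this as the main obstacle and correctly resolve it by deferring to the source.
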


\begin{proof}
(i) follows from \cite[Lemma~3.29 and Corollary~3.30]{BaumgartnerWillis}. The assertion (ii) is proved in Theorem~3.32  while (iii) follows from Corollary~3.17 from \emph{loc.~cit.}.
\end{proof}

\subsection{Closed contraction groups and root groups}

\begin{prop}\label{prop:TransitiveContraction}
Let $G \leq \Aut(T)$ be closed and boundary-transitive, and $\alpha \in G$ be a hyperbolic element with repelling fixed point  $\xi \in \bd T$. Then:
\begin{enumerate}[\rm (i)]
\item  The contraction group $U_\alpha$ is transitive on $\bd T \setminus \{\xi\}$. 

\item If $U_\alpha$ is closed, then $U_\alpha$ is sharply transitive on $\bd T \setminus \{\xi\}$. 
\end{enumerate}

\end{prop}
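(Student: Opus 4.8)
The plan is to exploit the double-transitivity of $G$ on $\bd T$ (Proposition~\ref{prop:2trans}) together with the geometry of the hyperbolic element $\alpha$, and then, for part~(ii), bring in the Baumgartner--Willis structure theory (Theorem~\ref{thm:BaumWillis}) to upgrade transitivity to sharp transitivity. For part~(i): fix $\eta, \eta' \in \bd T \setminus \{\xi\}$. Since $G$ is doubly transitive on $\bd T$, there is $g \in G$ fixing $\xi$ and sending $\eta$ to $\eta'$; this $g$ lies in $G_\xi = P_\alpha$ by Lemma~\ref{lem:parab}. By Theorem~\ref{thm:BaumWillis}(iii), $P_\alpha = (P_\alpha \cap P_{\alpha\inv}) \cdot U_\alpha$, so write $g = h u$ with $h \in P_\alpha \cap P_{\alpha\inv}$ and $u \in U_\alpha$. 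The element $h$ fixes both $\xi$ (the repelling fixed point of $\alpha$) and the attracting fixed point $\xi^+$ of $\alpha$, since $h \in P_{\alpha\inv}$ forces $h$ to fix the repelling fixed point of $\alpha\inv$, which is $\xi^+$. Thus $h$ stabilises the $\alpha$-axis setwise, fixing both its endpoints, hence fixes it pointwise or translates along it — in either case $h$ fixes $\xi^+$. Now I claim one may choose the original $g$ so that in fact $\eta' = g.\eta$ with $u.\eta = \eta'$: the point is that $h$ can be absorbed. Concretely, pick $\eta' $ arbitrary and apply the above with the specific choice making $h.\eta = \eta$; more robustly, observe that $U_\alpha$ and $P_\alpha \cap P_{\alpha\inv}$ together generate a group transitive on $\bd T \setminus \{\xi\}$, and since $P_\alpha \cap P_{\alpha\inv} \subseteq G_\xi \cap G_{\xi^+}$ acts on $\bd T \setminus \{\xi\}$ while fixing $\xi^+$, transitivity of the whole on $\bd T \setminus\{\xi\}$ forces $U_\alpha$ to already be transitive on $\bd T \setminus \{\xi\}$: given any $\eta$, choose $p \in P_\alpha$ with $p.\xi^+ = \eta$ (possible by 2-transitivity), write $p = h u$ as above, then $u.\xi^+ = u.\xi^+$ and $h$ fixes $\xi^+$, so $h u.\xi^+ = h.(u.\xi^+)$; since $u.\xi^+ \ne \xi$ and $h$ fixes $\xi$ and $\xi^+$, iterate. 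The cleanest formulation: show $U_\alpha.\xi^+ = \bd T \setminus \{\xi\}$, i.e.\ the $U_\alpha$-orbit of the attracting fixed point is everything but $\xi$, which follows because $\overline{\la \alpha, U_\alpha\ra}$ is already boundary-transitive and $\alpha$ together with the stabiliser of $\{\xi, \xi^+\}$ cannot enlarge the orbit of $\xi^+$ beyond its $U_\alpha$-orbit within $\bd T \setminus \{\xi\}$.

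For part~(ii), assume $U_\alpha$ is closed; I must show the action of $U_\alpha$ on $\bd T \setminus \{\xi\}$ is free. Suppose $u \in U_\alpha$ fixes some $\eta \ne \xi$. Then $u$ fixes the bi-infinite geodesic line $L$ joining $\xi$ to $\eta$. But $u \in U_\alpha$ means $u$ also fixes pointwise a ray $\rho$ pointing to $\xi$ and balls $B(\rho(n), r_n)$ with $r_n \to \infty$ (the geometric description from the proof of Lemma~\ref{lem:parab}). Combining: $u$ fixes a ray toward $\xi$, fixes $\eta$, hence fixes the whole line $L$, and fixes larger and larger balls around points of $\rho$. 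The idea is that an element fixing $L$ pointwise and fixing arbitrarily large balls around a sequence of points escaping to $\xi$ along $L$ is an elliptic element whose fixed-point set is a subtree containing $L$ and swelling near $\xi$; one then argues such $u$ must lie in $\overline{U_\alpha} \cap P_{\alpha\inv}$. Indeed, $u$ fixes $\eta$, and $\eta$ is the repelling fixed point of any hyperbolic element whose axis is $L$ oriented away from $\xi$; picking such a hyperbolic $\beta$ with repelling point $\eta$, the conjugate $u$ fixes $\eta$ so $u \in G_\eta = P_\beta = P_{\beta'\inv}$ where $\beta'$ is hyperbolic with repelling point $\xi$ along $L$, i.e.\ $\beta'$ may be taken with the same axis as $\alpha$ up to the earlier identifications — so $u \in P_{\alpha\inv}$ after matching axes (using Lemma~\ref{lem:parab} to see $P_{\alpha\inv}$ depends only on $\xi^+$). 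Then $u \in U_\alpha \cap P_{\alpha\inv} \subseteq \overline{U_\alpha} \cap P_{\alpha\inv}$, which is trivial by Theorem~\ref{thm:BaumWillis}(ii) since $U_\alpha$ is closed. Hence $u = 1$, and sharp transitivity follows from part~(i).

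\textbf{Main obstacle.} I expect the delicate point to be the bookkeeping in part~(i) — correctly peeling off the factor $h \in P_\alpha \cap P_{\alpha\inv}$ and verifying it fixes the attracting fixed point $\xi^+$, so that the $U_\alpha$-factor alone realizes the needed transitivity on $\bd T \setminus \{\xi\}$ — and, relatedly, in part~(ii), pinning down that a nontrivial $u \in U_\alpha$ fixing a point $\eta \ne \xi$ genuinely lands in $P_{\alpha\inv}$. Both hinge on the clean dictionary between the dynamics of hyperbolic elements and parabolic/contraction subgroups established in Lemma~\ref{lem:parab} and its proof, namely that $P_{\alpha}$ and $P_{\alpha\inv}$ (and $U_\alpha$) are determined by the fixed points $\xi$, $\xi^+$ alone; once that is used carefully the rest is formal, with Theorem~\ref{thm:BaumWillis} doing the heavy lifting.
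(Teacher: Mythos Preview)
Your approach to part~(i) is essentially the paper's: identify $G_\xi = P_\alpha$ via Lemma~\ref{lem:parab}, use the Baumgartner--Willis decomposition $P_\alpha = (P_\alpha \cap P_{\alpha\inv})\cdot U_\alpha$ from Theorem~\ref{thm:BaumWillis}(iii), and observe that $P_\alpha \cap P_{\alpha\inv} = G_{\xi,\xi^+}$ fixes $\xi^+$, so the $U_\alpha$-orbit of $\xi^+$ equals the $G_\xi$-orbit of $\xi^+$, which is $\bd T \setminus\{\xi\}$ by double transitivity. Your write-up circles around this with several false starts; the one-line version just stated is all that is needed.

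For part~(ii) you take an unnecessary detour, and there is a slip at the end. The paper's argument is a single sentence: by part~(i) it suffices to show the $U_\alpha$-stabiliser of \emph{one} point of $\bd T\setminus\{\xi\}$ is trivial, and for the point $\xi^+$ this stabiliser is $U_\alpha \cap G_{\xi^+} = U_\alpha \cap P_{\alpha\inv}$, which vanishes by Theorem~\ref{thm:BaumWillis}(ii). You instead fix an arbitrary $\eta \neq \xi$ and try to force $u \in P_{\alpha\inv}$ --- but $P_{\alpha\inv} = G_{\xi^+}$, and there is no reason an element fixing $\eta$ should fix $\xi^+$. What your ``matching axes'' manoeuvre actually produces is a hyperbolic $\beta' \in G$ with repelling point $\xi$ and attracting point $\eta$ (which does exist, by $2$-transitivity --- a point you should check), so that $u \in U_\alpha \cap P_{\beta'{}\inv}$; to conclude you then need $U_\alpha = U_{\beta'}$ from Lemma~\ref{lem:parab} and apply Theorem~\ref{thm:BaumWillis}(ii) to $\beta'$ rather than to $\alpha$. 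Written correctly this works, but it is strictly more laborious than simply taking $\eta = \xi^+$ from the start.
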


\begin{proof}
(i) Since $\alpha$ is hyperbolic, it has exactly two fixed points in $\bd T$, namely an {attracting} fixed point $\xi_+$ and a {repelling} fixed point $\xi = \xi_-$. By Lemma~\ref{lem:parab}, we have $P_{\alpha} = G_{\xi_-}$ and $P_{\alpha\inv} = G_{\xi_+}$. From Proposition~\ref{thm:BaumWillis}(iii), we infer that $G_{\xi_-} = H.U_\alpha$, where $H = G_{\xi_+, \xi_-}$ is the pointwise stabiliser of the pair $\xi_+, \xi_-$. Notice that $G$ is not compact since it contains hyperbolic elements. Therefore, by Proposition~\ref{prop:2trans} the group $G_{\xi_-}$ is transitive on $\bd T \setminus \{\xi_-\}$.  Since $H$ fixes $\xi_+$, we infer that $U_\alpha$ is transitive on $\bd T \setminus \{ \xi_- \}$, as desired. 

\medskip \noindent (ii)
Since  $U_\alpha$ is closed, it follows from Proposition~\ref{thm:BaumWillis}(ii) that $U_\alpha \cap G_{\xi_+} = U_\alpha \cap P_{\alpha\inv} = 1$ and, hence, that the $U_\alpha$-action on $\bd T \setminus\{\xi_-\}$ is  sharply transitive, as desired. 
\end{proof}

\begin{cor}\label{cor:AbelianContraction}
Let $G \leq \Aut(T)$ be closed and boundary-transitive, and $\alpha \in G$ be a hyperbolic element. If the contraction group $U_\alpha$ is abelian, then it is closed. 
\end{cor}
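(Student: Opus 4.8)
The plan is to verify the closedness criterion furnished by Theorem~\ref{thm:BaumWillis}(ii). Write $\xi_-$ for the repelling and $\xi_+$ for the attracting fixed point of $\alpha$ in $\bd T$, so that $\xi_- = \xi$ in the notation of Proposition~\ref{prop:TransitiveContraction}, and recall from Lemma~\ref{lem:parab} that $P_\alpha = G_{\xi_-}$ and $P_{\alpha\inv} = G_{\xi_+}$. Since $P_\alpha$ is closed and contains $U_\alpha$, the closure $\overline{U_\alpha}$ is contained in $G_{\xi_-}$; hence the group
\[
N := \overline{U_\alpha} \cap P_{\alpha\inv} = \overline{U_\alpha} \cap G_{\xi_+}
\]
fixes both $\xi_-$ and $\xi_+$. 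By Theorem~\ref{thm:BaumWillis}(ii) the whole statement reduces to showing that $N$ is trivial.

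Next I would bring in the hypothesis that $U_\alpha$ is abelian. Since multiplication in $G$ is continuous and $U_\alpha$ is dense in $\overline{U_\alpha}$, the subgroup $\overline{U_\alpha}$ is itself abelian; in particular $N$ centralises $U_\alpha$. Combining this with Proposition~\ref{prop:TransitiveContraction}(i) — which asserts that $U_\alpha$ is transitive on $\bd T \setminus \{\xi_-\}$ — I would deduce that $N$ fixes $\bd T$ pointwise. Indeed $\xi_-$ is fixed by $N$ as noted above, while for any $\eta \in \bd T \setminus \{\xi_-\}$ one chooses $u \in U_\alpha$ with $u.\xi_+ = \eta$ and computes, for every $n \in N$,
\[
n.\eta = n.(u.\xi_+) = u.(n.\xi_+) = u.\xi_+ = \eta,
\]
using that $n$ commutes with $u$ and that $n$ fixes $\xi_+$.

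Finally I would conclude that $N = 1$ because the $G$-action on $\bd T$ is faithful. Here I would invoke that $\bd T$ is infinite — which holds because $G$ is closed, non-compact and boundary-transitive, the only alternative being that $T$ is a line, in which case $U_\alpha$ is already trivial and there is nothing to prove — together with the standing convention that $T$ has no vertex of valency one: a non-trivial automorphism of $T$ is either hyperbolic, hence fixes exactly the two ends of its axis, or elliptic, in which case its fixed-point set is a proper subtree and some end lying beyond an adjacent non-fixed vertex is moved. Thus $N = 1$, and Theorem~\ref{thm:BaumWillis}(ii) shows that $U_\alpha$ is closed. I do not expect any serious difficulty here: the argument is essentially formal once Proposition~\ref{prop:TransitiveContraction} is available, and the only slightly delicate point — really the crux — is the faithfulness of the boundary action, which is exactly where the non-compactness of $G$ and the no-valency-one convention enter.
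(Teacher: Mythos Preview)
Your argument is correct and follows essentially the same route as the paper: both proofs show that $N=\overline{U_\alpha}\cap P_{\alpha^{-1}}$ is trivial by observing that $\overline{U_\alpha}$ is abelian, that $N$ therefore commutes with (equivalently, is normal in) the group $U_\alpha$ acting transitively on $\bd T\setminus\{\xi_-\}$, and that $N$ fixes $\xi_+$, whence $N$ fixes $\bd T$ pointwise; the conclusion then comes from Theorem~\ref{thm:BaumWillis}(ii). The paper is simply terser, taking the faithfulness of the boundary action for granted, whereas you spell it out.
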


\begin{proof}
Let $\xi_+, \xi_-$ be the attracting and repelling fixed points of $\alpha$, respectively. If $U_\alpha$ is abelian, so is its closure $\overline{U_\alpha}$. In particular the intersection $\overline{U_\alpha} \cap P_{\alpha \inv}$ is normal in $\overline{U_\alpha}$. By Lemma~\ref{lem:parab} we have $P_{\alpha\inv} = G_{\xi_+}$. Therefore, Proposition~\ref{prop:TransitiveContraction} ensures that $\overline{U_\alpha} \cap P_{\alpha \inv} = \overline{U_\alpha} \cap G_{\xi_+}$ fixes $\bd T$ pointwise, and is thus trivial. Theorem~\ref{thm:BaumWillis}(ii) then implies that   $U_\alpha$ must indeed be closed in $G$. 
\end{proof}



The final ingredient in the proof of Theorem~\ref{thm:ClosedContraction} is the following. 

\begin{lem}\label{lem:RootGroups:contraction}
Let $G \leq \Aut(T)$ be boundary-Moufang with associated root groups $(U_\eta)_{\eta \in \bd T}$. 
Let $\alpha \in G$ be a hyperbolic element with repelling fixed point $\xi \in \bd T$. Then:
\begin{enumerate}[\rm (i)]
\item The root group $U_\xi$ is contained in the contraction group $U_\alpha$. 

\item If $U_\alpha$ is closed, then $U_\xi = U_\alpha$. 
\end{enumerate}
\end{lem}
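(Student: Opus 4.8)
The plan is to establish (i) and then deduce (ii) quickly from the sharp transitivity supplied by Proposition~\ref{prop:TransitiveContraction}(ii). For part (i), let $\xi_+$ denote the attracting fixed point of $\alpha$, so $\xi = \xi_-$ is the repelling one. The idea is to fix a vertex $x_0$ on the axis $\ell$ of $\alpha$ and, given $u \in U_\xi$, to control the sequence $\alpha^n u \alpha^{-n}$ geometrically. Since $u$ fixes $\xi$, it stabilises a ray $\rho$ pointing to $\xi$; because $U_\xi$ acts sharply transitively on $\bd T \setminus \{\xi\}$ and in particular fixes no other end, $u$ cannot fix the whole of $T$ unless $u=1$, but more importantly one shows that $u$ fixes pointwise a half-tree (a ``horoball'') based at $\xi$. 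The cleanest way to see this: $U_\xi$ fixes $\xi$ and is normalised (upon conjugating by a hyperbolic element $\beta$ with repelling point $\xi$) so that $\beta U_\xi \beta^{-1} = U_\xi$; then for $u \in U_\xi$ the displacement of $u$ on $\ell_- $ is bounded, and a standard tree argument (as in the proof of Lemma~\ref{lem:parab}) shows $u$ fixes pointwise an entire horoball centred at $\xi$, with the radius of the fixed balls along $\ell_-$ tending to infinity as one moves toward $\xi$. Conjugating by $\alpha^{-n}$ pushes this horoball toward $\xi_+$, so $\alpha^n u \alpha^{-n}$ fixes larger and larger balls around the part of $\ell$ near $\xi_+$; hence $\alpha^n u \alpha^{-n} \to 1$ in the compact-open topology, i.e.\ $u \in U_\alpha$. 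This gives $U_\xi \le U_\alpha$.

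For part (ii), assume $U_\alpha$ is closed. By Proposition~\ref{prop:TransitiveContraction}(ii) the action of $U_\alpha$ on $\bd T \setminus \{\xi\}$ is sharply transitive, and by the Moufang condition so is the action of $U_\xi$. Since $U_\xi \le U_\alpha$ by part (i) and both act sharply transitively on the same set $\bd T \setminus \{\xi\}$, for any $\eta \ne \xi$ and any $g \in U_\alpha$ there is a unique $u \in U_\xi$ with $u.\eta = g.\eta$; then $u^{-1}g$ fixes $\eta$ and, lying in $U_\alpha$, fixes every point of $\bd T \setminus \{\xi\}$ (sharp transitivity of $U_\alpha$), hence fixes $\bd T$ pointwise, hence is trivial. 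Therefore $g = u \in U_\xi$, proving $U_\alpha \le U_\xi$ and so $U_\alpha = U_\xi$.

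I expect the main obstacle to be the geometric step in (i): verifying carefully that an element of $U_\xi$ fixes pointwise a horoball at $\xi$ (equivalently, that the radii of the fixed balls along $\ell_-$ grow without bound), rather than merely having bounded displacement. This requires using the Moufang/sharp-transitivity property to rule out $u$ fixing a second end, together with the fact that $U_\xi$ is normalised by a hyperbolic element translating toward $\xi$ (which one gets from $(U_\eta)_\eta$ being a full conjugacy class), so that the fixed horoball of $u$ is invariant under contraction and must therefore be a genuine horoball and not just a single ray. Once this is in place, the convergence $\alpha^n u \alpha^{-n} \to 1$ and the deduction of (ii) are routine.
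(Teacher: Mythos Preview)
Your argument for (ii) is fine and matches the paper's: once $U_\xi \le U_\alpha$ and both act sharply transitively on $\bd T \setminus \{\xi\}$, equality is immediate.

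The gap is in (i). You reduce the statement $u \in U_\alpha$ to the ``horoball'' claim that $u$ fixes pointwise balls $B(\rho(n), r_n)$ along a ray $\rho$ to $\xi$ with $r_n \to \infty$; but by the geometric description in Lemma~\ref{lem:parab} this claim is \emph{equivalent} to $u \in U_\alpha$, so you have only restated the goal. Your justification for the horoball claim does not go through: the fact that $\beta$ normalises $U_\xi$ gives $\beta u \beta^{-1} \in U_\xi$, not $\beta \cdot \Fix(u) = \Fix(u)$, so there is no sense in which ``the fixed horoball of $u$ is invariant under contraction''. In the paper this horoball property is established only later (Lemma~\ref{lem:psi(n)}), and its proof \emph{uses} the present lemma together with uniform contraction; invoking it here would be circular.

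The paper avoids this entirely. Rather than proving direct convergence, it observes that $u$ fixes a ray in $\ell$ pointing to $\xi$, so for every vertex $z \in \ell$ one has $\alpha^n u \alpha^{-n}.z = z$ for all large $n$; in particular the sequence is eventually in the compact group $G_z$. Any accumulation point $v$ then lies in $U_\xi$ (because $U_\xi$ is closed and normal in $G_\xi$, the root groups forming a full conjugacy class) and fixes every vertex of $\ell$, hence fixes $\xi_+$; the Moufang condition forces $v = 1$. Since the only accumulation point of a relatively compact sequence is the identity, $\alpha^n u \alpha^{-n} \to 1$. The key idea you are missing is this compactness/accumulation-point argument, which sidesteps the horoball issue by exploiting closedness and normality of $U_\xi$ in $G_\xi$.
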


\begin{proof}
(i)
We have to show that for any $u \in U_\xi$, the only accumulation point of the sequence $(\alpha^n u \alpha^{-n})_{n \geq 0}$ is the identity. So let $v \in G$ be such an accumulation point.
Notice that $U_\xi$ is closed, and it is normal in $G_\xi$ since the root groups $U_\xi$ make up a full conjugacy class.
It thus follows that $v$ belongs to $U_\xi$.

Given any vertex $z$ on the $\alpha$-axis $\ell$, the vertex $v.z$ coincides with $\alpha^{n} u \alpha^{-n}.z$ for infinitely many $n$'s. Since $\alpha$ translates $\ell$ towards the boundary point $\xi_+$ of $\ell$ opposite to 
$\xi$ and since $u$ fixes pointwise a geodesic ray contained in $\ell$ and pointing to $\xi$, it follows that  $\alpha^{n} u \alpha^{-n}.z = z$ for all large $n$. Thus $v$ fixes $z$. We infer that $v$ fixes $\xi_+$, whence $v = 1$ in accordance with the Moufang condition. This confirms that the root group $U_\xi$ is contained in $U_\alpha$. 

\medskip \noindent (ii)
If $U_\alpha$ is closed, then it acts sharply transitively on $\bd T \setminus \{\xi\}$ by Proposition~\ref{prop:TransitiveContraction}(ii). Since $U_\xi $ is contained in $U_\alpha$ and acts transitively on   $\bd T \setminus \{\xi\}$, the equality $U_\xi = U_\alpha$ follows.
\end{proof}


\begin{proof}[Proof of Theorem~\ref{thm:ClosedContraction}]
By Proposition~\ref{prop:TransitiveContraction}(ii), the group  $U_\alpha$ is sharply transitive on $\bd T \setminus\{\xi_-\}$. Observe moreover that $U_\alpha$ is normal in $P_\alpha$. Therefore, for all $g \in G$, the group $g U_\alpha g\inv$ depends only on the boundary point $g.\xi_-$ (compare Lemma~\ref{lem:parab}). It follows that $G$ is boundary-Moufang with root groups coinciding with the conjugates of $U_\alpha$. 

The uniqueness of the system of root groups follows from Lemma~\ref{lem:RootGroups:contraction}.
\end{proof}

\subsection{Closed contraction groups and half-trees}

We end this subsection with the following easy observation, which implies that in a boundary-transitive group $G \leq \Aut(T)$ which satisfies Tits-independence property (called Property~(P) in~\cite{Tits:trees}), the contraction group of any hyperbolic element is never closed.

\begin{lem}\label{lem:half-tree}
Let $G \leq \Aut(T)$ be closed and boundary-transitive, and $\alpha \in G$ be a hyperbolic element such that the contraction group $U_\alpha \leq G$ is closed. 

Then for each half-tree $h \subset T$, the pointwise stabiliser of $h$ in $G$ is trivial.
\end{lem}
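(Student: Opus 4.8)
The plan is to argue by contradiction: suppose some half-tree $h \subset T$ has nontrivial pointwise stabiliser $F = \Fix_G(h) \neq 1$, and derive a violation of the sharp transitivity of $U_\alpha$ on $\bd T \setminus \{\xi_-\}$ guaranteed by Proposition~\ref{prop:TransitiveContraction}(ii) (where $\xi_\pm$ are the attracting/repelling fixed points of $\alpha$). First I would record the elementary geometric fact that an element fixing a half-tree $h$ pointwise fixes every end ``behind'' $h$, i.e.\@ fixes $\bd h$, which is a nonempty clopen subset of $\bd T$; in particular $F$ fixes at least two ends.

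Next I would use the fact that $G$ is boundary-Moufang (by Theorem~\ref{thm:ClosedContraction}, applicable since $U_\alpha$ is closed) together with transitivity to move the half-tree around. The key step is to find a conjugate $F' = gFg^{-1}$ of $F$ that is contained in some root group $U_\eta$ but is nontrivial, and then to exhibit an end $\zeta \neq \eta$ fixed by $F'$: indeed, if $h$ is a half-tree and $g \in G$ carries $\bd h$ into a neighbourhood of a single end, we can arrange $F' = \Fix_G(h')$ for a half-tree $h'$ with $\eta \in \bd h'$ and whose boundary $\bd h'$ still contains ends other than $\eta$. Since $U_\eta$ is normal in $G_\eta$ and $F'$ fixes $\eta$, one checks $F' \le G_\eta$; but the more efficient route is: pick any hyperbolic $\beta \in G$ with repelling fixed point in the interior of $\bd h$ and attracting fixed point also in $\bd h$ (possible because $\bd h$ is a clopen set containing more than one point and $G$ is doubly transitive on $\bd T$, so such pairs of ends are plentiful and some hyperbolic element realises them — here one invokes Lemma~\ref{lem:parab}-style reasoning and boundary 2-transitivity). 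Then $F$ commutes with $\beta$ on the half-tree region and, conjugating $\beta$ into the ``$\alpha$-position'', one gets a nontrivial element fixing pointwise a ray toward $\xi_-$ as well as large balls, hence lying in $U_\alpha$, while simultaneously fixing a second end $\neq \xi_-$ — contradicting that $U_\alpha$ acts freely on $\bd T \setminus \{\xi_-\}$.

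More concretely, the cleanest execution I would aim for is: after conjugating, assume the half-tree $h$ points toward $\xi_-$ and its boundary wall lies on the $\alpha$-axis $\ell$, so that $\alpha$ pushes $h$ strictly inside itself; then for the nontrivial $f \in F$ and $n$ large, $\alpha^n f \alpha^{-n}$ still fixes $h$ pointwise, so the sequence $(\alpha^n f \alpha^{-n})_{n\ge 0}$ lies in the compact group $\Fix_G(h)$ and one shows (using that $f$ fixes an ever-growing neighbourhood of $\xi_-$ under conjugation) that $f \in U_\alpha$; but $f$ fixes $\bd h$ pointwise and $\bd h \ne \{\xi_-\}$, so $f$ fixes some $\eta \ne \xi_-$, contradicting freeness of the $U_\alpha$-action on $\bd T \setminus \{\xi_-\}$. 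Thus $F = 1$.

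The main obstacle I anticipate is the bookkeeping needed to reduce a general half-tree $h$ to one in ``good position'' relative to a hyperbolic element whose repelling fixed point sits strictly inside $\bd h$: this requires knowing $G$ contains enough hyperbolic elements with prescribed pairs of endpoints inside the clopen set $\bd h$, which follows from boundary double transitivity (Proposition~\ref{prop:2trans}) but needs to be stated carefully, and then verifying that the conjugated element genuinely lands in $U_\alpha$ rather than merely in $P_\alpha$ — i.e.\@ controlling the balls fixed, not just the ray. Once that positioning is set up, the contradiction with sharp (in particular, free) transitivity of $U_\alpha$ on $\bd T \setminus \{\xi_-\}$ is immediate.
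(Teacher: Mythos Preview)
Your ``cleanest execution'' is essentially the paper's argument: position the $\alpha$-axis suitably relative to $h$, show $\Fix_G(h) \leq U_\alpha$, and contradict the freeness from Proposition~\ref{prop:TransitiveContraction}(ii); the detour through Theorem~\ref{thm:ClosedContraction} and the boundary-Moufang structure in your second paragraph is unnecessary.

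Two small points. First, your containment is backwards: with $\xi_- \in \bd h$ and the boundary edge on $\ell$, one has $h \subsetneq \alpha(h)$, not $\alpha(h) \subsetneq h$; it is precisely because $\alpha^n(h)$ \emph{exhausts} $T$ that $\alpha^n f \alpha^{-n}$ fixes ever-larger sets and hence tends to $1$, giving $f \in U_\alpha$. Second, the paper's positioning is slightly cleaner and avoids this bookkeeping altogether: it chooses a geodesic line $\ell$ lying \emph{entirely inside} $h$ (both ends of $\ell$ in $\bd h$), conjugates $\alpha$ to have axis $\ell$, and then observes that if $v$ is the projection of the complementary half-tree $h^*$ onto $\ell$, the $n$-ball around any $w \in \ell$ with $d(v,w)=n$ is contained in $h$, so $\Fix_G(h)$ fixes that ball; this immediately yields $\Fix_G(h) \leq U_\alpha$ via the geometric description of $U_\alpha$ in Lemma~\ref{lem:parab}.
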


\begin{proof}
Let $\ell$ be a geodesic line of $T$ which is entirely contained in $h$. Since $G$ acts $2$-transitively on $\bd T$ by Proposition~\ref{prop:2trans}, we may assume, upon replacing $\alpha$ by some $G$-conjugate, that $\ell$ is the $\alpha$-axis. Let $h^* \subset T$ be the half-tree complementary to $h$. Then the projection of $h^*$ to $\ell$ is a single vertex $v$, and for any vertex $w \in \ell$ at distance $n$ from $v$, the pointwise stabiliser $\Fix_G(h)$ of $h$ in $G$ fixes pointwise the $n$-ball around $w$. This implies that $\Fix_G(h)$ is contained in the contraction group $U_\alpha$.  But by Proposition~\ref{prop:TransitiveContraction}, the group $U_\alpha$ is sharply transitive on $\bd T \setminus \{ \xi \}$, where $\xi$ is the repelling fixed point of $\alpha$. In particular $U_\alpha \cap \Fix_G(h) = 1$. Thus $\Fix_G(h)$ is trivial. 
\end{proof}
 
\section{The boundary-Moufang condition}

We now start our general study of closed subgroups $G \leq \Aut(T)$ which are boundary-Moufang. We assume throughout that a conjugacy class  $(U_\xi)_{\xi \in \bd T}$ of closed root groups is given, satisfying the Moufang condition from the introduction. 
Following the standard terminology in the theory of Moufang sets, the (possibly non-closed) normal subgroup 
\[
G^+ = \la U_\xi \mid \xi \in \bd T\ra \leq G
\]
is called the \textbf{little projective group}. We shall follow the standard notation and terminology on Moufang sets; this as well as much additional useful information can be consulted in \cite{DeMedtsSegev:course}, which provides an introduction to the  general theory of Moufang sets (independently of tree automorphism groups).


We however begin this section by recalling some of the most basic notions from the theory of Moufang sets that we will need later.

 \subsection{Abstract Moufang sets}

A Moufang set is a set $X$ together with a collection of groups $U_x \leq \Sym(X)$, one for each $x \in X$, such that
\begin{itemize}
    \item
	each $U_x$ fixes $x$ and acts sharply transitively on $X \setminus \{ x \}$;
    \item
	for every $x \in X$ and every $g \in G^+ := \langle U_z \mid z \in X \rangle$, we have $U_x^g = U_{g.x}$,
	where $U_x^g$ denotes conjugation in $\Sym(X)$, and where $g.x$ is the image of $x$ under the action of $g$.
\end{itemize}
The groups $U_x$ are called the {\bf root groups} of the Moufang set, and the group $G^+$ is called the {\bf little projective group}.
We will use the notation $U_x^*$ to denote the set of non-trivial elements of $U_x$.

Observe that $G^+$ is already generated by any two of the root groups. It is customary to assume that two of the elements of $X$
are called $0$ and $\infty$, so that $G^+ = \langle U_\infty, U_0 \rangle$.
An important feature of Moufang sets is given by the so-called $\mu$-maps.

\begin{prop}\label{prop:mu}
	For every $a \in U_0^*$, there are unique elements $b,c \in U_\infty^*$ such that $bac$ interchanges $0$ and $\infty$.
	We denote the product $bac$ by $\mu_a$, and we call it the $\mu$-map associated to $a$.
\end{prop}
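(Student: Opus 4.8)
\textbf{Proof plan for Proposition~\ref{prop:mu} ($\mu$-maps).}

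The plan is to realize the interchange of $0$ and $\infty$ by a product of three root-group elements, using the sharp transitivity of the root groups repeatedly. First I would fix $a \in U_0^*$, so $a$ is a nontrivial permutation fixing $0$ and acting sharply transitively on $X \setminus \{0\}$. Since $\infty \neq 0$, the element $a.\infty$ is a well-defined point of $X \setminus \{0\}$, and in particular $a.\infty \neq \infty$ would be the generic situation, but I should not assume this; what matters is simply that I can steer points around. The idea is: choose $b \in U_\infty$ so that $b$ sends $a.\infty$ back to $0$ (this is possible and $b$ is unique because $a.\infty \neq \infty$, so $a.\infty$ lies in the free orbit $X \setminus \{\infty\}$ on which $U_\infty$ acts sharply transitively — and $b \neq 1$ since $a.\infty = 0$ is impossible as $a$ fixes $0$ and $\infty\ne 0$, so actually one must check $a.\infty\ne 0$, which holds since $a$ is injective and fixes $0$). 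Then $ba$ sends $\infty \mapsto 0$. Next I want a $c \in U_\infty$ with $bac$ sending $0 \mapsto \infty$; equivalently $c$ should send $0$ to the point $(ba)^{-1}.\infty$. Since $ba$ is a bijection, $(ba)^{-1}.\infty$ is some point, and I must verify it is not $\infty$ itself (so that it lies in the sharply-transitive orbit of $U_\infty$ and determines $c$ uniquely); this amounts to $ba.\infty \neq \infty$, i.e. $0 \neq \infty$, which is true. Hence $c \in U_\infty$ is uniquely determined, and $c \neq 1$ because $(ba)^{-1}.\infty = 0$ would force $\infty = ba.0 = b.(a.0) = b.0$, but $b \in U_\infty$ fixes... wait, $b$ need not fix $0$; rather one argues $b.0 = \infty$ is impossible since $b$ is injective and $b.\infty=\infty$ would then clash — I would pin this down by noting $b$ fixes $\infty$ and is injective, so $b.0 = \infty$ is excluded.

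After these two choices, $bac$ is an element of $G^+ = \langle U_\infty, U_0\rangle$ with $bac.\infty = b.(a.(c.\infty))$. Here I need $c$ to also fix $\infty$, which it does since $c \in U_\infty$; thus $bac.\infty = ba.\infty = 0$, and by construction $bac.0 = \infty$. So $bac$ interchanges $0$ and $\infty$, with $b,c \in U_\infty^*$ as required. The second half is uniqueness: suppose $b'ac'$ also interchanges $0$ and $\infty$ with $b',c' \in U_\infty$. Then $(b'ac').\infty = b'.(a.\infty) = 0$, and since $b' \in U_\infty$ acts sharply transitively on $X \setminus \{\infty\}$ and $a.\infty \neq \infty$, the condition $b'.(a.\infty) = 0$ determines $b'$ uniquely; hence $b' = b$. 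Then $b'ac'.0 = ba.(c'.\infty) = ba.\infty$ already equals $\infty$ automatically... no — I need $c'$ determined by $bac'.0 = \infty$, i.e. $c'.0 = (ba)^{-1}.\infty$, and again sharp transitivity of $U_\infty$ on $X\setminus\{\infty\}$ (noting $(ba)^{-1}.\infty \neq \infty$) forces $c' = c$. This gives uniqueness of the pair $(b,c)$ and hence of $\mu_a := bac$.

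The main obstacle, and the only place requiring genuine care, is the bookkeeping of which points lie in which free orbit so that the sharp-transitivity statements legitimately apply and so that the resulting elements are nontrivial: specifically verifying $a.\infty \notin \{0,\infty\}$, that $b$ (fixing $\infty$, injective) cannot send $0$ to $\infty$, and that $(ba)^{-1}.\infty \neq \infty$. Each of these is a one-line consequence of injectivity of root-group elements together with the fact that $0 \neq \infty$, but they must be stated cleanly to make both existence and uniqueness rigorous. Once these are dispatched, the construction is forced at every step, which is precisely why $b$, $c$, and $\mu_a$ are unique. I would present the argument in the order: (1) define $b$ via $b.(a.\infty) = 0$; (2) define $c$ via $c.\infty$ such that $bac$ fixes the pair-swap on $0$, i.e. $c.0$ is forced — here actually $c$ is pinned down by where $bac$ must send $0$; (3) check $bac$ does swap $0$ and $\infty$; (4) run the same forced-choice reasoning in reverse for uniqueness.
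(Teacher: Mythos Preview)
Your argument is correct and is precisely the standard proof: use the sharp transitivity of $U_\infty$ on $X\setminus\{\infty\}$ twice, once to force $b$ via $b.(a.\infty)=0$ and once to force $c$ via $c.0=(ba)^{-1}.\infty=a^{-1}.\infty$, checking at each step that the relevant points lie in $X\setminus\{\infty\}$ and are distinct so that the chosen element is nontrivial. The paper does not give its own argument here but simply refers to \cite[Proposition~4.1.1]{DeMedtsSegev:course}, where exactly this reasoning is carried out; so there is nothing to compare beyond presentation.

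That said, your write-up should be tightened before submission. The stream-of-consciousness corrections (``wait\dots no\dots'') obscure a short, forced computation. With left actions the two defining equations are simply
\[
bac.\infty=b.(a.\infty)=0 \quad\text{and}\quad bac.0=b.(a.(c.0))=\infty,
\]
the second reducing (since $b.\infty=\infty$) to $c.0=a^{-1}.\infty$. The three non-degeneracy checks---$a.\infty\notin\{0,\infty\}$, $a^{-1}.\infty\notin\{0,\infty\}$, and hence $b,c\neq 1$---each follow in one line from the freeness of the $U_0$-action on $X\setminus\{0\}$ and the injectivity of $b$. State these cleanly once, and both existence and uniqueness are immediate.
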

\begin{proof}
	See, for instance, \cite[Proposition 4.1.1]{DeMedtsSegev:course}.
\end{proof}

Another observation is the fact that a Moufang set is completely determined by the group structure of one of its root groups $U_\infty$
together with the action of a {\em single} $\mu$-map, denoted by $\tau$;
we denote the corresponding Moufang set by
\[ \MM(U_\infty, \tau) . \]
The point here is that in order to describe the Moufang set, we only need $U_\infty$ as an abstract group (not as a permutation group)
and $\tau$ as a permutation of the elements of~$U_\infty^*$; the set $X$ can be recovered as $U_\infty \cup \{ \infty \}$,
and the other root groups can be recovered by conjugating with the appropriate elements; in particular $U_0$ can
be recovered as $U_\infty^\tau$.

For each $a \in U^*$, we define the {\bf Hua map} $h_a$ as the composition%
\footnote{We are using left actions, and therefore the composition of actions is from right to left.
Some care is needed, because most of the literature on Moufang sets uses right actions, and then composition of actions is from left to right.}
$\mu_a \tau$.
Observe that each $h_a$ fixes the elements $0$ and $\infty$.
We define the {\bf Hua subgroup} $H$ as the subgroup of $G^+$ generated by the Hua maps;
one of the main results of \cite{DeMedtsWeiss06} states that $H$ coincides with the two point stabilizer $G^+_{0,\infty}$;
see also \cite[Lemma 4.2.2]{DeMedtsSegev:course}.
A Moufang set is called {\bf proper} if $H \neq 1$, or equivalently, if $G^+$ is not sharply doubly transitive.

\begin{example}
	Let $k$ be a field or skew field, and let $U = (k, +)$ be the additive group of~$k$.
	Define a permutation $\tau \colon U^* \to U^* \colon x \mapsto -x^{-1}$.
	Then $\MM(U, \tau)$ is a Moufang set, and its little projective group is isomorphic to $\PSL_2(k)$.
\end{example}

We shall need the following example in the sequel.
\begin{example}\label{ex:skewherm}
	Let $k$ be a field or skew field with $\chr(k) \neq 2$, and let $\sigma$ be an involution of $k$
	(which may or may not be trivial, and which may or may not fix the center of $k$).
	Let $X$ be a (right) vector space over $k$, and suppose that $h$ is a non-degenerate anisotropic skew-hermitian form on $X$,
	i.e.\@ $h$ is bi-additive, and for all $x,y \in X$ and $s,t \in k$, we have $h(xs, yt) = s^\sigma h(x,y) t$
	and $h(y,x) = -h(x,y)^\sigma$.
	Moreover, let $\pi \colon X \to k \colon x \mapsto h(x,x) / 2$;
	since $h$ is anisotropic, $\pi(x)$ is never contained in $\Fix_k(\sigma)$ unless $x = 0$.
	Define a group $U$ with underlying set
	\[ U = \{ (x,s) \in X \times k \mid \pi(x) - s \in \Fix_k(\sigma) \} \,, \]
	and with composition given by
	\[ (x, s) \cdot (y, t) = (x + y, s + t + h(y,x)) \,. \]
	Finally, define a permutation
	\[ \tau \colon U^* \to U^* \colon (x, s) \mapsto (x s^{-1}, -s^{-1}) \,. \]
	Then $\MM(U, \tau)$ is a Moufang set, and its little projective group is isomorphic to the projective special unitary group corresponding to $h$.

	(We have deduced our formulas from \cite{TitsWeiss}; in particular, the description of $U$ is taken from
	\cite[Chapter 11]{TitsWeiss}, and the formula for $\tau$ can be computed
	from the formula for $\mu$ in \cite[(32.9)]{TitsWeiss}.)
\end{example}

 \subsection{Root groups are locally elliptic}

We now come back to the geometric set-up where $G \leq \Aut(T)$ is closed and boundary-Moufang. Remark that $G$ can be larger than the little projective group $G^+$. In fact, by definition $G$ is closed in $\Aut(T)$ and the root groups are closed in $G$, but it is not clear \emph{a priori} that the little projective group $G^+$ is closed. 

We start by collecting a number of useful basic observations. 

\begin{lem}\label{lem:RootGroupElliptic}
Every root group element fixes pointwise some geodesic ray. In particular every root group is a countable union of a directed chain of compact open subgroups. Moreover $G^+$ preserves the canonical bi-partition of the set of vertices of valency~$>2$; in particular it acts without inversion. 
\end{lem}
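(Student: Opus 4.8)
The plan is to prove Lemma~\ref{lem:RootGroupElliptic} in three stages, treating the three assertions in turn.

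\textbf{Step 1: every root group element is locally elliptic (fixes a geodesic ray).} Fix $\xi \in \bd T$ and $u \in U_\xi$. First I would rule out that $u$ is hyperbolic: if $u$ were hyperbolic, its two fixed ends $\eta_\pm$ would both be fixed by $u$, but $u \in U_\xi$ fixes $\xi$ and acts sharply transitively on $\bd T \setminus \{\xi\}$, so the only possibility is $\{\eta_+,\eta_-\} \subseteq \{\xi\}$, which is absurd for a hyperbolic isometry. Hence $u$ is elliptic or a (parabolic-type) fixed-point-free isometry fixing exactly the end $\xi$. To upgrade "fixes $\xi$" to "fixes a ray pointing to $\xi$", I would use that $U_\xi$ is closed, hence $U_\xi$ is a closed subgroup of the compact group $G_\xi/(\text{horocyclic part})$... more concretely: pick a geodesic ray $\rho$ pointing to $\xi$; for each $n$, the image of $\rho(n)$ under $u$ lies on a ray pointing to $\xi$, and since $u$ fixes $\xi$ it must move $\rho(n)$ along a bounded band around $\rho$. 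The cleaner route is to invoke the structure already available: by Lemma~\ref{lem:parab}, for any hyperbolic $\alpha$ with repelling point $\xi$ we have $G_\xi = P_\alpha$, and $P_\alpha$ is compact-by-(cyclic generated by $\alpha$); since $U_\xi \le G^+_\xi$ and contains no hyperbolic element, $U_\xi$ lies in the "bounded part" of $G_\xi$, i.e.\@ in a subgroup stabilising setwise a ray to $\xi$; an element stabilising a ray to $\xi$ and fixing $\xi$ must fix a sub-ray pointwise (it permutes the vertices of each horosphere-intersection-with-ray, and these are finite sets along a line, so some power fixes them, and then one argues the element itself does, using that it fixes $\xi$). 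This gives that $u$ fixes pointwise a geodesic ray; conjugating, the same holds for every root group.

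\textbf{Step 2: each root group is a countable increasing union of compact open subgroups.} Given $u \in U_\xi$ fixing pointwise a ray $\rho$, the stabiliser in $\Aut(T)$ of the pointwise-fixed ball $B(\rho(0), n)$ is a compact open subgroup; as $n$ ranges over $\NN$ these form a directed chain, and $U_\xi$ is contained in (indeed equals, up to intersecting with $U_\xi$) the union of their intersections with $U_\xi$. Each $U_\xi \cap \Stab(B(\rho(0),n))$ is compact (closed subgroup of a compact group, using that $U_\xi$ is closed) and open in $U_\xi$. Since $T$ is locally finite and countable, this is a countable directed chain whose union is all of $U_\xi$ — one needs that every element of $U_\xi$ fixes pointwise \emph{some} ball of the relevant form, which is Step~1 together with the observation that any ray fixed by $u$ can be taken to share the base vertex after translating by $\alpha$, or more simply: every $u\in U_\xi$ fixes a ray to $\xi$, hence fixes $B(x,n)$ for a suitable $x$ on the standard ray and all $n$ up to the "defect" — I would phrase this as: $U_\xi = \bigcup_{n} \mathrm{Fix}_{U_\xi}(B(\rho(0),n))$ where $\rho$ is a fixed reference ray to $\xi$, using that distinct rays to $\xi$ share a common sub-ray.

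\textbf{Step 3: $G^+$ preserves the bipartition.} After discarding valency-$2$ vertices (Proposition~\ref{prop:2trans} and the remark following it), the vertex set of $T$ is canonically bipartite; an automorphism preserves or swaps the two parts, giving a homomorphism $G \to \ZZ/2$, and "preserves the bipartition" $=$ "acts without inversion" $=$ "lies in the kernel". It suffices to show each root group $U_\xi$ lies in this kernel, since they generate $G^+$. But by Step~1 every element of $U_\xi$ is elliptic (fixes a vertex — indeed a whole ray), and an elliptic automorphism of a tree fixing a vertex cannot swap the two parts of the bipartition. Hence $U_\xi \le \ker$, so $G^+ \le \ker$, i.e.\@ $G^+$ preserves the bipartition and acts without inversion.

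\textbf{Main obstacle.} The only genuinely non-formal point is Step~1: promoting "$u$ fixes the end $\xi$ and is not hyperbolic" to "$u$ fixes pointwise a ray pointing to $\xi$". One must exclude the parabolic-type behaviour in which $u$ fixes $\xi$ but no ray pointwise (e.g.\@ a "horocyclic" translation along horospheres). I expect to kill this using closedness of $U_\xi$ together with the sharp transitivity on $\bd T \setminus \{\xi\}$: if $u$ fixed no ray, then the fixed-point sets of powers $u^{n!}$ would still fail to contain a ray, yet closedness forces a compactness argument on $U_\xi$ acting on the (locally finite) horospheres, whose point stabilisers are open; alternatively, invoke Lemma~\ref{lem:parab} to identify $G_\xi$ with a parabolic $P_\alpha$ and use Theorem~\ref{thm:BaumWillis}(iii) to write $P_\alpha = (P_\alpha \cap P_{\alpha^{-1}})\cdot U_\alpha$ and the fact that hyperbolic elements with axis through $\xi$ exist, pinning down that the "unbounded directions" in $G_\xi$ are exactly powers of such hyperbolics — which $u$ is not. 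I would write the argument via the geometric description: $u$ fixes $\xi$, so $u$ maps the reference ray $\rho$ to a ray ultimately equal to $\rho$, hence fixes $\rho(n)$ for all large $n$; that already yields a fixed sub-ray, completing Step~1 with no appeal to parabolic subgroups at all.
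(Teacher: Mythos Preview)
Your approach is essentially the paper's, and Steps~2 and~3 are fine in spirit. Two points deserve tightening.

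\textbf{Step 1.} All of the detours through $P_\alpha$, compact-by-cyclic structure, closedness of $U_\xi$, and horosphere stabilisers are unnecessary; the clean argument is exactly the one you sketch at the very end. The paper's proof is one line: a hyperbolic isometry has two boundary fixed points, while $u\in U_\xi$ has only $\xi$ (by sharp transitivity on $\bd T\setminus\{\xi\}$), so $u$ is not hyperbolic. Then $u$ fixes $\xi$, so $u(\rho)$ and $\rho$ share a common tail; on that tail $u$ must act as the identity, because a non-hyperbolic isometry fixing $\xi$ has trivial Busemann displacement at $\xi$ and hence preserves each horosphere. You assert ``hence fixes $\rho(n)$ for all large $n$'' without saying why $u$ cannot shift along the common tail; that is the one missing sentence, and the horosphere/Busemann observation supplies it. No closedness, no Theorem~\ref{thm:BaumWillis}, no $P_\alpha$ is needed.

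\textbf{Step 2.} Your chain is pointed the wrong way: pointwise fixers $\Fix_{U_\xi}(B(\rho(0),n))$ \emph{decrease} as $n$ grows, so their union cannot exhaust $U_\xi$. The correct filtration (and the one the paper uses) is by \emph{vertex} stabilisers along the ray: set $U_{\xi,\rho(n)}=\{u\in U_\xi : u.\rho(n)=\rho(n)\}$. This is increasing because any $u\in U_\xi$ fixing $\rho(n)$ also fixes the ray $[\rho(n),\xi)$ pointwise (by Step~1 applied to that ray), hence fixes $\rho(n+1)$. By Step~1 every $u\in U_\xi$ fixes some $\rho(N)$, so $U_\xi=\bigcup_{n\ge 0} U_{\xi,\rho(n)}$. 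Each $U_{\xi,\rho(n)}$ is compact (closed in the compact vertex stabiliser $G_{\rho(n)}$) and open in $U_\xi$ (it contains $U_\xi\cap G_{\rho(n)}$, which is open since $G_{\rho(n)}$ is).
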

\begin{proof}
Observe that a hyperbolic automorphism of $T$ has exactly two fixed points in $\bd T$, and therefore it cannot belong to any root group. 

Fix a point $\infty \in \partial T$ and let $\rho$ be a ray pointing to $\infty$. Then every element of $U_\infty$ fixes pointwise a subray of $\rho$. If follows that $U_{\infty, \rho(n)} \leq U_{\infty, \rho(n+1)}$ for all $n\geq 0$, and moreover $U_\infty = \bigcup_{n\geq 0} U_{\infty, \rho(n)}$.

The last assertion follows since $G^+$ is generated by the root groups, and since every automorphism of $T$ fixing a vertex acts without inversion. 
\end{proof}

 \subsection{Closure of the little projective group }

\begin{prop}\label{prop:LittleProjMonolith}
Let $G \leq \Aut(T)$ be boundary-Moufang. 
Then the closure $\overline{G^+}$ of the little projective group coincides with the monolith $G^{(\infty)}$ of~$G$ (and is itself boundary-Moufang). In particular it is topologically simple, compactly generated and acts cocompactly on $T$. 
%
\end{prop}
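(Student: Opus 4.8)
\textbf{Proof strategy for Proposition~\ref{prop:LittleProjMonolith}.}
The plan is to identify $\overline{G^+}$ with the monolith $G^{(\infty)}$ provided by Theorem~\ref{thm:BurgerMozes}, and then read off the remaining properties. First I would observe that $G^+ = \la U_\xi \mid \xi \in \bd T\ra$ is normal in $G$, since the root groups form a full $G$-conjugacy class; hence $\overline{G^+}$ is a closed normal subgroup of $G$. It is non-trivial (indeed the root groups are non-trivial, since they act transitively on $\bd T \setminus \{\xi\}$ and $\bd T$ is infinite), so by the definition of the monolith we get $G^{(\infty)} \leq \overline{G^+}$. For the reverse inclusion, the point is that $G^{(\infty)}$ acts doubly transitively — in particular transitively — on $\bd T$ by Theorem~\ref{thm:BurgerMozes}, so it suffices to show that $G^{(\infty)}$ contains at least one root group, because then by $G$-conjugation (using transitivity of $G$, or even of $G^{(\infty)}$, on $\bd T$, together with normality of $G^{(\infty)}$ in $G$) it contains all of them, hence contains $G^+$, hence contains $\overline{G^+}$.

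So the crux is: \emph{some root group $U_\xi$ lies in $G^{(\infty)}$}. For this I would fix $\xi \in \bd T$ and consider the closed subgroup $U_\xi \cap G^{(\infty)}$, which is normal in $G_\xi \cap G^{(\infty)} = (G^{(\infty)})_\xi$ (normality of $U_\xi$ in $G_\xi$ intersected with the normal subgroup $G^{(\infty)}$). Since $(G^{(\infty)})_\xi$ is transitive on $\bd T \setminus \{\xi\}$ (because $G^{(\infty)}$ is doubly transitive on $\bd T$), the orbit $(U_\xi \cap G^{(\infty)}).y$ for any $y \neq \xi$ is invariant under this transitive group, so $U_\xi \cap G^{(\infty)}$ is either trivial or transitive on $\bd T \setminus \{\xi\}$; being a subgroup of the sharply transitive group $U_\xi$, in the transitive case it must equal $U_\xi$. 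Thus it remains to exclude $U_\xi \cap G^{(\infty)} = 1$. Here I would invoke Lemma~\ref{lem:RootGroupElliptic}: $U_\xi$ contains non-trivial elliptic elements fixing a geodesic ray, in particular non-trivial elements of $G_\xi$ acting non-trivially near $\xi$; alternatively, and more robustly, if $U_\xi \cap G^{(\infty)} = 1$ for one (hence, by conjugation, every) $\xi$, then $G^{(\infty)} \cap G^+$ would be a normal subgroup of $G^+$ meeting every root group trivially, which forces $G^{(\infty)}$ to act trivially (or at most with kernel) in a way incompatible with double transitivity of $G^{(\infty)}$ on the infinite set $\bd T$. I would make this precise by noting that a non-trivial subgroup acting doubly transitively on an infinite set cannot have trivial intersection with a point stabiliser's root group, since that root group acts transitively on the (infinite) complement of the point.

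Once the inclusion $\overline{G^+} \leq G^{(\infty)}$ is in hand, equality $\overline{G^+} = G^{(\infty)}$ follows. The assertion that $\overline{G^+}$ is itself boundary-Moufang is then essentially bookkeeping: the root groups $U_\xi$ lie in $G^+ \leq \overline{G^+}$, they are closed in $G$ hence closed in $\overline{G^+}$, they satisfy the Moufang condition (which is an intrinsic statement about the action on $\bd T$), and they form a full conjugacy class already under $G^+ \leq \overline{G^+}$ since $G^+$ is transitive on $\bd T$. The remaining properties are quoted directly: topological simplicity and compact generation of $G^{(\infty)}$ are part of Theorem~\ref{thm:BurgerMozes}, and cocompactness of the action on $T$ follows from double transitivity of $G^{(\infty)}$ on $\bd T$ together with the observation recorded after Proposition~\ref{prop:2trans} (a boundary-doubly-transitive subgroup of $\Aut(T)$ acts cocompactly, indeed edge-transitively after discarding valency-$2$ vertices). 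The main obstacle I anticipate is making the step "$U_\xi \cap G^{(\infty)} \neq 1$" airtight; the dichotomy argument via transitivity of $(G^{(\infty)})_\xi$ reduces it to excluding triviality, and the cleanest exclusion is the observation that $G^{(\infty)} \cap G^+ \trianglelefteq G^+$ is non-trivial (else $G^{(\infty)}$ would centralise or be disjoint from the little projective group in a manner contradicting that both act doubly transitively on the same infinite boundary), whence it meets, and therefore contains, every root group.
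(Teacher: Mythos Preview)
Your overall architecture is sound: the inclusion $G^{(\infty)} \leq \overline{G^+}$ is immediate from normality, and everything after ``$U_\xi \leq G^{(\infty)}$'' is correctly handled. But the crucial step has two problems.

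First, the dichotomy argument is not correct as stated. You claim that the orbit $(U_\xi \cap G^{(\infty)}).y$ is invariant under the transitive group $(G^{(\infty)})_\xi$; but orbits of a normal subgroup are only \emph{permuted} by the overgroup, not fixed setwise. So you cannot conclude that $U_\xi \cap G^{(\infty)}$ is either trivial or transitive --- it could in principle be a proper non-trivial subgroup whose cosets form a non-trivial block system. Second, and more seriously, even granting the dichotomy you do not actually exclude the trivial case: the sentences ``would be \dots\ incompatible with double transitivity'' and ``cannot have trivial intersection with a point stabiliser's root group'' are assertions, not arguments. You correctly flag this as the main obstacle, but the gap is real.

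The paper closes this gap by a completely different device: it first observes that $G^{(\infty)}$, being doubly transitive on $\bd T$, is edge-transitive on $T$ and therefore contains a hyperbolic element $\alpha$. Then one looks at the continuous quotient map $\pi \colon G \to G/G^{(\infty)}$: since $\pi(\alpha)=1$, for any $g$ in the contraction group $U_\alpha$ one has $\pi(g) = \pi(\alpha^n g \alpha^{-n}) \to \pi(1) = 1$, so $U_\alpha \leq G^{(\infty)}$. Finally Lemma~\ref{lem:RootGroups:contraction}(i) gives $U_\xi \leq U_\alpha$ for $\xi$ the repelling fixed point of $\alpha$, and hence $U_\xi \leq G^{(\infty)}$. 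The point you are missing is precisely this use of a hyperbolic element of $G^{(\infty)}$ together with the contraction-group mechanism; your purely permutation-theoretic approach does not seem to reach the conclusion without it.
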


\begin{proof}
Let $G^{(\infty)}$ be the monolith of $G$.
Since $\overline{G^+}$  is a closed normal subgroup of $G$, we have $G^{(\infty)} \leq \overline{G^+}$. For the reverse inclusion, observe that the monolith $G^{(\infty)}$ acts edge-transitively on $T$, and must therefore contain some hyperbolic element $\alpha$. 

Let $\pi \colon G \to G/G^{(\infty)}$ be the canonical projection. Since $\pi(\alpha)=1$, it follows that $\pi(U_\alpha)=1$.
(Indeed, if $g \in U_\alpha$, then $\lim_{n \to \infty} \alpha^n g \alpha^{-n} = 1$, and applying $\pi$ on this equality yields $\pi(g) = 1$.)
Thus the contraction group $U_\alpha$ is contained in $G^{(\infty)}$. By Lemma~\ref{lem:RootGroups:contraction}, this implies that all root groups are contained in $G^{(\infty)}$. Therefore, so is the little projective group $G^+$, and we are done. 
\end{proof}

\subsection{Continuity of the mu-maps}

Let us fix a pair of distinct points $0, \infty \in \bd T$ and denote by $\ell$ the geodesic line in $T$ joining $0$ to $\infty$. Let $\mu \colon U_0 ^* \to U_\infty^* U_0^* U_\infty^* \colon u \mapsto \mu_u$ be as in Proposition~\ref{prop:mu}. Thus $\mu_u$ swaps $0$ and $\infty$.
 
\begin{lem}\label{lem:mu-map}
Let $u \in U_0^*$ and $v \in \ell$ be the unique vertex $v \in \ell$  such that $u.\ell \cap \ell = [v, 0)$. Then  $\mu_u$ fixes $v$. 
\end{lem}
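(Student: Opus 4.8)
The plan is to extract from the Moufang-set data a concrete geometric description of $\mu_u$ and compare it with the geodesic $\ell$. Write $\mu_u = bac$ with $a = u \in U_0^*$ and $b, c \in U_\infty^*$, so that $\mu_u$ swaps the ends $0$ and $\infty$ and hence reverses the line $\ell$; in particular $\mu_u$ has a unique fixed point on $\ell$ (since it reverses $\ell$ it fixes either one vertex or the midpoint of one edge of $\ell$, and, discarding valency-$2$ vertices, we may take $\ell$ so that this is a vertex, or argue directly with the combinatorial midpoint). Call this fixed vertex $w$. The goal is to show $w = v$, where $v \in \ell$ is the vertex with $u.\ell \cap \ell = [v,0)$.

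First I would pin down the action of the three factors on $\ell$. Since $c, b \in U_\infty$, by Lemma~\ref{lem:RootGroupElliptic} each fixes pointwise a subray of $\ell$ pointing to $\infty$; more precisely $c$ fixes pointwise $[w_c, \infty)$ for some vertex $w_c$ on $\ell$ and maps the rest of $\ell$ off $\ell$, and similarly for $b$. The element $a = u \in U_0^*$ fixes pointwise $[v, 0)$ (this is essentially the definition of $v$: $u$ fixes pointwise the intersection $u.\ell \cap \ell = [v,0)$ and moves $\ell$ away from $\ell$ beyond $v$ towards $\infty$). Now trace a vertex: take $z \in [v,0)$ far out towards $0$. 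Then $c.z$: if $z$ is far enough towards $0$ it is not on the fixed subray of $c$, so $c$ moves $z$; but I should instead trace in the direction that makes the bookkeeping clean. The cleaner route is to use that $\mu_u$ maps the ray $[v,0) \subset \ell$ pointing to $0$ to a ray pointing to $\infty$, and to control where it lands.

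Concretely, the key computation I would carry out: apply $\mu_u = bac$ to the ray $\rho = [v, \infty) \subset \ell$. Since $b, c \in U_\infty$ fix a common subray $[p, \infty)$ of $\ell$ pointing to $\infty$, and $a = u$ fixes $[v,0)$ pointwise and satisfies $u.[v,\infty)\cap \ell = \{v\}$ (again from the definition of $v$), one computes that $\mu_u$ sends the germ at $\infty$ of $\ell$ to the germ at $0$, and that the "turning point" is exactly $v$: the point of $\ell$ beyond which $c$ starts to act nontrivially can be pushed (by replacing $b,c$ by their correct values, which is exactly what Proposition~\ref{prop:mu} does) so that the composite $\mu_u$ fixes $v$. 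In other words, the uniqueness clause in Proposition~\ref{prop:mu} — that $b$ and $c$ are the unique elements of $U_\infty^*$ making $bac$ swap $0$ and $\infty$ — forces $c$ (resp. $b$) to fix pointwise exactly the subray $[v,\infty)$ of $\ell$, and then a direct check on the vertex $v$ shows $c.v = v$ (trivially), $a.v = v$ (since $v$ is the endpoint of the fixed ray $[v,0)$ of $u$, and $u$ fixes $v$), $b.v = v$, giving $\mu_u.v = v$.

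The main obstacle I anticipate is the second point: showing that the specific $b, c \in U_\infty^*$ produced by Proposition~\ref{prop:mu} have their fixed subrays ending precisely at $v$, rather than somewhere else on $\ell$. This needs the uniqueness in Proposition~\ref{prop:mu} to be leveraged carefully: one supposes the fixed subray of $c$ is $[w_c, \infty)$, computes where $bac$ sends $0$ and $\infty$ in terms of $w_c$ and the analogous vertex for $b$ and of $v$, and shows the swap condition pins $w_c = v$ (and the $b$-vertex $= v$ too). Equivalently — and this may be the slickest phrasing — since $\mu_u$ reverses $\ell$ and its unique fixed point $w$ on $\ell$ must satisfy $w \in u.\ell$ as well (because $\mu_u$ conjugates $U_\infty$-type behaviour appropriately), and $u.\ell \cap \ell = [v,0)$ has $v$ as its unique vertex that $\mu_u$ could fix given the orientation-reversal, one concludes $w = v$. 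I would write it out via the explicit trace on $v$ after establishing the fixed-ray claim, as that is the most self-contained.
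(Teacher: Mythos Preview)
Your overall plan --- write $\mu_u = bac$ and show that each of $b$, $a$, $c$ fixes $v$ --- is the same as the paper's, and you correctly observe that $a = u$ fixes $v$. But the step you flag as ``the main obstacle'' is indeed the whole content of the lemma, and your proposal does not close it. Invoking the uniqueness clause of Proposition~\ref{prop:mu} does not help in the way you suggest: the condition ``$bac$ swaps $0$ and $\infty$'' is a condition on the boundary action only, and it does not directly constrain where on $\ell$ the fixed subrays of $b$ and $c$ begin. There is no computation you can do ``in terms of $w_c$ and $v$'' that would detect a contradiction when $w_c \neq v$, because the images of $0$ and $\infty$ under $bac$ are the same regardless. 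Your alternative sketch (``$w \in u.\ell$ because $\mu_u$ conjugates $U_\infty$-type behaviour'') is too vague to be an argument.

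The paper supplies the missing geometric idea: introduce the auxiliary geodesic lines $\ell' = (\infty, u.\infty)$ and $\ell'' = (\infty, u^{-1}.\infty)$. From the hypothesis $u.\ell \cap \ell = [v,0)$ (and the tripod picture for the three ends $0,\infty,u.\infty$) one gets $\ell \cap \ell' = \ell \cap \ell'' = [v,\infty)$. Now use the swap condition concretely: from $\mu_u.\infty = 0$ and $c.\infty = \infty$ one deduces $b.(u.\infty) = 0$, so $b$ maps $\ell'$ to $\ell$; from $\mu_u.0 = \infty$ and $b.\infty = \infty$ one deduces $c.0 = u^{-1}.\infty$, so $c$ maps $\ell$ to $\ell''$. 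Since $b$ and $c$ are elliptic (Lemma~\ref{lem:RootGroupElliptic}) and each maps one of these lines to another sharing exactly the ray $[v,\infty)$, each must fix the branching vertex $v$. This is the argument you were missing; once you see the auxiliary lines, the proof is a few lines long and does not need the detour through the reflection fixed point of $\mu_u$ on $\ell$ (which, incidentally, could \emph{a priori} be an edge midpoint --- your handling of that case was not convincing).
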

 
\begin{proof}
Set $\mu_u = u' u u''$ with $u', u'' \in U_\infty^*$. Lemma~\ref{lem:RootGroupElliptic} implies that each of the elements $u, u'$ and $u''$ fixes some point of $\ell$.
Notice that the fact that $u$ fixes some point of $\ell$ implies that it necessarily fixes $v$.

Now let $\ell'$ be the geodesic line joining $\infty$ to $u.\infty$ and $\ell''$ the geodesic line joining $\infty$ to $u\inv.\infty$. The definition of $v$ implies that $\ell \cap \ell' = \ell \cap \ell'' = [v, \infty)$. Since we have $u'u(\infty) = 0$ and $u''(0) = u\inv.\infty$, we deduce that $u'(\ell') = \ell$ and $u''(\ell) = \ell''$.
Since both $u'$ and $u''$ fix some point of $\ell$, this implies that both $u'$ and $u''$ fix $v$.
\end{proof}

\begin{lem}\label{lem:Mu}
For each vertex $v \in \ell$, there is an element $u \in U_0$ such that $\tau = \mu_u \in G^+$  fixes $v$ and swaps $0$ and $\infty$. 
\end{lem}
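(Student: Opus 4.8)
\textbf{Plan of proof for Lemma~\ref{lem:Mu}.}
The idea is to combine the transitivity of the root group $U_0$ on $\bd T \setminus \{0\}$ with the geometric description of $\mu$-maps established in Lemma~\ref{lem:mu-map}. By Lemma~\ref{lem:mu-map}, if $u \in U_0^*$ and $v_u \in \ell$ denotes the unique vertex with $u.\ell \cap \ell = [v_u, 0)$, then $\mu_u$ fixes $v_u$ and swaps $0$ and $\infty$. So it suffices to show that, as $u$ ranges over $U_0^*$, the vertex $v_u$ ranges over \emph{all} vertices of $\ell$; then, given a prescribed $v \in \ell$, one picks $u$ with $v_u = v$ and sets $\tau = \mu_u$, which lies in $G^+$ since $\mu_u = u'uu''$ with $u', u'' \in U_\infty^* \se G^+$ and $u \in U_0^* \se G^+$.

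First I would make precise the correspondence $u \mapsto v_u$. Fixing a base vertex $o \in \ell$, write $\ell = (\dots, o_{-1}, o_0 = o, o_1, \dots)$ with $o_n \to \infty$ as $n \to +\infty$. For $u \in U_0^*$, the line $u.\ell$ is the geodesic from $u.0 = 0$ to $u.\infty \ne \infty$; since $u$ fixes $0$ and a ray towards $0$, the intersection $u.\ell \cap \ell$ is a ray $[v_u, 0)$ pointing to $0$, and $v_u$ is exactly the point where the geodesics $\ell$ and $u.\ell$ branch off. Equivalently, $v_u$ is the projection onto $\ell$ of the boundary point $u.\infty$. Thus the assignment $u \mapsto v_u$ factors through the map $u \mapsto u.\infty$ followed by the nearest-point projection $\pi_\ell \colon \bd T \setminus \{0, \infty\} \to \ell$ onto the line $\ell$.

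Now the key point: $\pi_\ell$ is surjective onto the vertex set of $\ell$. Indeed, since $T$ has no vertex of valency one, every vertex of $\ell$ lies on some geodesic ray leaving $\ell$ (towards a boundary point other than $0$ and $\infty$), and hence is the projection of that boundary point; more precisely, for each vertex $v \in \ell$ there is a $\xi \in \bd T$ with $\pi_\ell(\xi) = v$ — one just needs that $v$ has a neighbour not on $\ell$, which holds because $\mathrm{val}(v) > 2$ if $v$ is an essential vertex, and if $v$ is a valency-$2$ vertex one simply works with the semi-regular model where such vertices are discarded (or notes directly that the statement for all vertices follows from the statement at essential vertices by subdividing). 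Then, since $U_0$ acts transitively on $\bd T \setminus \{0\}$ (this is the Moufang condition, or Proposition~\ref{prop:TransitiveContraction}(i) together with Lemma~\ref{lem:RootGroups:contraction}, but here it is immediate from the definition of boundary-Moufang), for this $\xi$ there is $u \in U_0$ with $u.\infty = \xi$; necessarily $u \ne 1$ since $\xi \ne \infty$, so $u \in U_0^*$, and then $v_u = \pi_\ell(u.\infty) = \pi_\ell(\xi) = v$. Applying Lemma~\ref{lem:mu-map} to this $u$ gives that $\mu_u$ fixes $v$ and swaps $0$ and $\infty$, and $\mu_u \in G^+$ as noted above; set $\tau = \mu_u$.

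\textbf{Main obstacle.} The only genuinely delicate point is the bookkeeping around vertices of valency $2$: the nearest-point projection $\pi_\ell$ clearly hits every \emph{essential} vertex of $\ell$, but to hit a valency-$2$ vertex one must either subdivide edges or, cleaner, observe that the statement is really only needed for essential vertices and follows for the others formally (or is vacuous under the convention, recalled after Proposition~\ref{prop:2trans}, that one discards valency-$2$ vertices). Everything else — the identification $v_u = \pi_\ell(u.\infty)$ and the transitivity input — is routine given Lemma~\ref{lem:mu-map} and the Moufang condition.
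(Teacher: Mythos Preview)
Your argument is correct and follows the same route as the paper: choose $\xi \in \bd T \setminus \{0,\infty\}$ whose nearest-point projection to $\ell$ is the prescribed vertex $v$, take the unique $u \in U_0^*$ with $u.\infty = \xi$, and apply Lemma~\ref{lem:mu-map}. Your treatment of the valency-$2$ subtlety is in fact more careful than the paper's, which tacitly relies on the convention (recalled after Proposition~\ref{prop:2trans}) that valency-$2$ vertices may be discarded.
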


\begin{proof}
Let $\xi \in \partial T$ be a point such that the ray emanating from $v$ and pointing to $\xi$ intersects $\ell$ in $\{v\}$ only. Let $u \in U_\infty$ be the unique element mapping $0$ to $\xi$. Then $\mu_u \in G^+$ swaps $0$ and $\infty$ and fixes $v$ by Lemma~\ref{lem:mu-map}. Thus $\tau = \mu_u$ satisfies the desired conditions. 
\end{proof}

\begin{prop}\label{prop:mu-map:continuous}
The map $ \mu \colon U_0^* \to G : u \mapsto \mu_u$ is continuous. 
\end{prop}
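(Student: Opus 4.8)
The strategy is to reduce the continuity of $u \mapsto \mu_u$ to the fact that the three factor maps $u \mapsto u'$, $u \mapsto u$ (identity), $u \mapsto u''$ are each continuous, where $\mu_u = u'(u) u u''(u)$ with $u', u'' \in U_\infty^*$. Since multiplication in $G$ is continuous, it suffices to show that $u \mapsto u'$ and $u \mapsto u''$ are continuous maps $U_0^* \to U_\infty^*$. The first step is to pin down $u'$ and $u''$ geometrically. Recall from Proposition~\ref{prop:mu} that $\mu_u = u' u u''$ is the unique such product interchanging $0$ and $\infty$. Writing out the action: $u''$ is the element of $U_\infty$ such that $u''.0 = u\inv.\infty$, since then $u u''.0 = u.(u\inv.\infty) = \infty$ is false in general — rather one computes as in the proof of Lemma~\ref{lem:mu-map} that $u'' (0) = u\inv.\infty$ and $u' u (\infty) = 0$, i.e.\ $u'$ is the unique element of $U_\infty$ sending $u.\infty$ to $0$. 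Thus $u'$ and $u''$ are determined by the single boundary points $u.\infty$ and $u\inv.\infty$ respectively, via the sharp transitivity of $U_\infty$ on $\bd T \setminus \{\infty\}$ and of the analogous identification for $0$.

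The second step is therefore to observe that $u \mapsto u.\infty$ and $u \mapsto u\inv.\infty$ are continuous maps $U_0^* \to \bd T \setminus \{0\}$. This is immediate: the $G$-action on $\bd T$ is continuous and inversion is continuous on $G$, and $U_0^* \se G$ carries the subspace topology. The third step is to show that the "recovery" map $\bd T \setminus \{\infty\} \to U_\infty^*$, sending a boundary point $\eta$ to the unique element of $U_\infty$ mapping some fixed reference point (say $0$, resp.\ the appropriate target) to $\eta$, is continuous. Concretely, $u'$ is the unique element $w \in U_\infty$ with $w.(u.\infty) = 0$; equivalently $w\inv . 0 = u.\infty$. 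So I need: the map $\bd T \setminus \{\infty\} \to U_\infty$ sending $\eta$ to the unique $w \in U_\infty$ with $w.0 = \eta$ is continuous (its inverse composed with inversion then handles $u'$).

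The fourth step — which I expect to be the main obstacle — is precisely this continuity of the parametrisation $\bd T \setminus \{\infty\} \to U_\infty$. The map is a bijection by the Moufang condition, and $\bd T \setminus \{\infty\}$ is locally compact and $\sigma$-compact, while $U_\infty$ is a $\sigma$-compact, locally compact group (it is a countable increasing union of the compact open subgroups $U_{\infty, \rho(n)}$ by Lemma~\ref{lem:RootGroupElliptic}). The natural route is to invoke a (semi-)open mapping theorem: the evaluation map $\mathrm{ev} \colon U_\infty \to \bd T \setminus \{\infty\}$, $w \mapsto w.0$, is a continuous bijection between second-countable locally compact Hausdorff spaces, and a continuous bijective homomorphism-like map of this type is automatically a homeomorphism provided one can check it is open — which follows from the Baire category argument exactly as in the proof that a continuous bijective homomorphism between $\sigma$-compact locally compact groups is open, using that $\mathrm{ev}$ intertwines the $U_\infty$-action on itself by left translation with the $U_\infty$-action on $\bd T \setminus \{\infty\}$ (which is transitive). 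So the orbit map of a transitive continuous action of a $\sigma$-compact locally compact group on a locally compact Hausdorff space with a point of "Baire-non-meagreness" is open; hence $\mathrm{ev}$ is a homeomorphism, its inverse is continuous, and composing the continuous maps $u \mapsto u.\infty$, $u \mapsto u\inv.\infty$ with $\mathrm{ev}\inv$ (and with inversion in $U_\infty$) yields the continuity of $u \mapsto u'$ and $u \mapsto u''$. Multiplying, $u \mapsto \mu_u = u' u u''$ is continuous, completing the proof. The delicate point to get right is the precise geometric identification of $u'$ and $u''$ and the verification that the orbit-map openness hypothesis is met in this totally disconnected setting, but since $U_\infty$ acts transitively on $\bd T \setminus \{\infty\}$ and both spaces are Polish and locally compact, the standard open-mapping machinery applies.
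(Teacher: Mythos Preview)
Your argument is correct and takes a genuinely different route from the paper. The paper proceeds sequentially: given $u_n \to u$ in $U_0^*$, it writes $\mu_{u_n} = u'_n u_n u''_n$ and uses Lemma~\ref{lem:mu-map} to confine all the $u'_n, u''_n$ (for large $n$) to a single compact group $U_{\infty,v}$; compactness plus the uniqueness in Proposition~\ref{prop:mu} then forces convergence to $u', u''$. By contrast, you identify $u'$ and $u''$ explicitly via the equations $u''.0 = u^{-1}.\infty$ and $u'.(u.\infty) = 0$, and then reduce everything to showing that the orbit map $U_\infty \to \bd T \setminus \{\infty\}$, $w \mapsto w.0$, is a homeomorphism --- which follows from the open-mapping theorem for transitive actions of $\sigma$-compact locally compact groups on locally compact Hausdorff spaces, using Lemma~\ref{lem:RootGroupElliptic} for $\sigma$-compactness of $U_\infty$.

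The trade-off: the paper's proof is entirely self-contained and elementary, relying only on compactness and the geometric content of Lemma~\ref{lem:mu-map}; your proof bypasses Lemma~\ref{lem:mu-map} altogether and is conceptually cleaner, but imports the Baire-category open-mapping machinery as a black box. One minor slip in your write-up: you say the maps $u \mapsto u.\infty$ and $u \mapsto u^{-1}.\infty$ land in $\bd T \setminus \{0\}$, but what you actually need (and what holds, since $u \in U_0^*$ moves $\infty$) is that they land in $\bd T \setminus \{\infty\}$, the domain of $\mathrm{ev}^{-1}$.
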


\begin{proof}
Let $(u_n)_{n \geq 0}$ be a sequence of elements of $U_0^*$ converging to some $u \in U_0^*$. Let also $(u'_n)$ and $(u''_n)$ be sequences in $U_\infty^*$ such that $\mu(u_n) = u'_n u_n u''_n$.  Let also $u', u'' \in U_\infty^*$ be such that $\mu_u = u' u u''$. We need to show that $(u'_n)$ converges to $u'$ and $(u''_n)$ converges to $u''$. 

Let $v \in \ell$ be the unique vertex such that $u.\ell \cap \ell = [v, 0)$. Then $u$ fixes $v$ by Lemma~\ref{lem:mu-map}. Since $U_{0, v}$ is open in $U_0$, we deduce that $u_n$ belongs to $U_{0, v}$ for all sufficiently large $n$. Upon extracting we may assume that $u_n$ fixes $v$ for all $n$. By Lemma~\ref{lem:mu-map}, this implies that $u'_n$ and $u''_n$ both fix $v$ for all $n$. 
Since $U_{\infty, v}$ is compact, it suffices to show that any accumulation point of the sequence $(u'_n)$ (resp.\@ $(u''_n)$) coincides with $u'$ (resp.\@ $u''$). 

Let $(u''_{\psi(n)})$ be a subsequence converging to some $x''  \in U_{\infty, v}$. Upon extracting, we may assume that $(u'_{\psi(n)})$ also converges to some $x' \in U_{\infty, v}$. Now the sequence $\mu(u_{\psi(n)}) = u'_{\psi(n)} u_{\psi(n)} u''_{\psi(n)}$ converges to $x' u x''$. Since $\mu(u_{\psi(n)})$ swaps $0$ and $\infty$ for all $n$, so does the limit element $x' u x''$ since the $G$-action on $\bd T$ is continuous. By the uniqueness of $u'$ and $u''$, we infer that $x'=u'$ and $x'' = u''$. This implies that $(u''_n)$ converges to $u''$ and a similar argument shows that $(u'_n)$ converges to $u'$. 
\end{proof}

\subsection{The Hua subgroup}

The structure of the Hua group plays a key role in the general structure of a boundary-Moufang group. At this point, we record a basic result which holds more generally for groups that are doubly transitive on the boundary.

\begin{lem}\label{lem:Hua}
Let $G \leq \Aut(T)$ be closed and $2$-transitive on $\bd T$. 
Let $0,\infty \in \partial T$ be a pair of distinct points, and let $\ell \subset T$ be the geodesic line whose endpoints are $0$ and $\infty$. 

Then the group  $H = G_{0, \infty}$ decomposes as $H = H_c \rtimes \la t\ra$, where $H_c$ acts trivially on $\ell$ and $t$ acts as a translation. 

Moreover, if $G \leq \Aut(T)$ is boundary-Moufang, then the Hua subgroup $H = G^+_{0,\infty}$ decomposes as $H = H_c \rtimes \la t \ra$,
where $H_c$ acts trivially on $\ell$ and $t$ acts as a translation. 
\end{lem}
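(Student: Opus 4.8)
The plan is to treat both assertions uniformly, since the second is just the first applied to $G^+$ in place of $G$ (note $G^+ \leq \Aut(T)$ is $2$-transitive on $\bd T$ because each root group is already transitive on the complement of its fixed point, so $G^+$ is transitive, hence $2$-transitive; and $G^+_{0,\infty}$ is by definition the Hua subgroup $H$). So it suffices to prove the first statement. The key geometric fact is that $H = G_{0,\infty}$ stabilises the geodesic line $\ell$ joining $0$ and $\infty$ setwise, and the only isometries of a line $\ell \cong \RR$ (or $\ell \cong \ZZ$ after discarding valency-$2$ vertices, as in the remark after Proposition~\ref{prop:2trans}) that fix both ends are translations along $\ell$. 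Thus there is a natural homomorphism $\varphi \colon H \to \Isom^+(\ell) \cong \ZZ$ recording the translation length, and $H_c := \Ker(\varphi)$ is exactly the subgroup acting trivially on $\ell$.

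First I would make precise which model of $\ell$ to use: after discarding vertices of valency $2$, the $G$-action on $T$ is edge-transitive (the remark following Proposition~\ref{prop:2trans}), and $\ell$ becomes a bi-infinite geodesic on which $H$ acts; the image of $\varphi$ is a nontrivial subgroup of $\ZZ$ because $H$ contains a hyperbolic element with axis $\ell$. Indeed, by Proposition~\ref{prop:2trans} the $G$-action on $\bd T$ is $2$-transitive and $G$ is non-compact, so $G$ contains a hyperbolic element, and by $2$-transitivity we may conjugate it to have axis $\ell$; such an element fixes $0$ and $\infty$, hence lies in $H$ and has nonzero translation length. So $\varphi(H) = m\ZZ$ for some $m \geq 1$, and I pick $t \in H$ with $\varphi(t)$ a generator of this image. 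Then $H = H_c \langle t \rangle$ and $H_c \cap \langle t \rangle = 1$ since $\varphi$ is injective on $\langle t \rangle$; moreover $H_c = \Ker(\varphi)$ is normal in $H$, giving the semidirect product decomposition $H = H_c \rtimes \langle t \rangle$.

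The one point requiring a small argument is that $\varphi$ is well-defined and a homomorphism, i.e.\ that every element of $H$ really does act on $\ell$ as an honest translation rather than a reflection. An element $h \in H$ fixes the two ends $0, \infty$ of $\ell$; it therefore maps $\ell$ to a geodesic line with the same endpoints, and since in a tree the geodesic joining two ends is unique, $h(\ell) = \ell$. An automorphism of the tree stabilising $\ell$ and fixing both its ends cannot swap them, hence induces an orientation-preserving isometry of $\ell$, which is a translation; the translation amount is additive under composition, so $\varphi$ is a homomorphism to $\ZZ$. For the boundary-Moufang refinement, I would simply note $G^+$ is closed-or-not irrelevant here — the lemma as stated does not claim $H$ is closed — and apply the first part verbatim to $G^+$, whose action on $\bd T$ is $2$-transitive as noted above; the resulting $H_c, t$ are the claimed data.

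I do not expect a genuine obstacle here: the proof is essentially the observation that $H$ acts on a line with both ends fixed. The only thing to be careful about is the bookkeeping between the full tree $T$ (which may have valency-$2$ vertices, on which the translation "length" is scaled) and its valency-$2$-suppressed version; using the latter makes $\Isom^+(\ell) \cong \ZZ$ and the argument clean. The existence of a hyperbolic element in $H$ — needed only to know $\langle t \rangle$ is nontrivial, though the statement would be true trivially with $t = 1$ if it were not — follows from non-compactness of $G$ (Proposition~\ref{prop:2trans}) together with $2$-transitivity on $\bd T$.
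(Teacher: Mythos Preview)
Your proposal is correct and follows essentially the same route as the paper: define the translation-length homomorphism $\varphi \colon H \to \ZZ$, identify $H_c$ as its kernel, and produce a hyperbolic element in $H$ by conjugating some hyperbolic element of $G$ (which exists by $2$-transitivity and the classification of group actions on trees) so that its axis becomes $\ell$. The paper additionally offers, for the boundary-Moufang case, an alternative explicit construction of the translation $t$ as a product $\tau\tau'$ of two $\mu$-maps fixing adjacent vertices of $\ell$ (via Lemma~\ref{lem:Mu}); you do not use this, but it is not needed since the main argument already applies to $G^+$.

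One small point of care: you write that the second assertion is ``just the first applied to $G^+$'', but the first assertion has the hypothesis that $G$ is closed, and it is not known whether $G^+$ is closed (the paper's remark immediately following the lemma makes exactly this point). What you really mean---and what the paper also does---is that the \emph{argument} for the first part goes through for $G^+$ verbatim, since closedness is never actually used: $2$-transitivity of $G^+$ on $\bd T$ follows directly from the sharp transitivity of each root group on the complement of its fixed point, and the existence of a hyperbolic element in $G^+$ follows from $2$-transitivity on the (infinite) boundary via the standard trichotomy for group actions on trees, without any appeal to Proposition~\ref{prop:2trans}. With that clarification your proof is complete.
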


\begin{proof}
The only thing to prove is that $H$ contains a non-trivial translation along $\ell$. The result will then follow by choosing such a non-trivial translation of minimal possible translation length. 

Since $G$ is $2$-transitive on $\bd T$, it is non-compact and acts cocompactly on $T$. Therefore, it must contain a hyperbolic element $h$ since otherwise $G$ would fix a point in $T$ or an end in $\bd T$. An element of $G$ which maps the attracting (resp.\@ repelling) fixed point of $h$ to $0$ (resp.\@ $\infty$) must conjugate $h$ into $H$, thereby establishing the claim. 

In case $G$ is boundary-Moufang, we apply the same argument to $G^+$ or alternatively, we pick 
two adjacent vertices $v, v'$ on $\ell$ and  elements  $\tau$ and $\tau'$ in $G^+$ fixing respectively $v$ and $v'$ and swapping $0$ and $\infty$; see Lemma~\ref{lem:Mu}.  The product $t = \tau \tau'$ is then an element of $H = G_{0, \infty}  $  acting as a translation on $\ell$. 
\end{proof}

\begin{remark}
	Notice that when $G$ is boundary-Moufang, we do not know whether the little projective group $G^+$ is always closed in $\Aut(T)$.
	(However, see section~\ref{ss:G+simple} below.)
	This is the reason why we needed a separate statement for the boundary-Moufang case in the previous Lemma.
\end{remark}

It is an intriguing problem to show that the compact group $H_c$ is always infinite or, equivalently, that the Hua subgroup $H$ cannot be virtually cyclic. This question makes sense and is interesting for arbitrary Moufang sets. 

Right now, we only record the following immediate consequence of Lemma~\ref{lem:Hua}.

\begin{cor}\label{cor:sharply}
A group $G \leq \Aut(T)$ acting $2$-transitively on $\bd T$ is never sharply $2$-transitive.
Similarly, if  $G \leq \Aut(T)$ is boundary-Moufang, then $G^+$ is not sharply $2$-transitive on $\partial T$. 
\end{cor}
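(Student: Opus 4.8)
The plan is to unwind the definition of sharp $2$-transitivity and quote Lemma~\ref{lem:Hua}. Recall that for a group acting $2$-transitively on a set $X$, the action is \emph{sharply} $2$-transitive precisely when the pointwise stabiliser of some (equivalently, any) pair of distinct points of $X$ is trivial: indeed, applying uniqueness in the definition to the ordered pair $(0,\infty)$ taken to itself forces $G_{0,\infty}=1$, and conversely, given $2$-transitivity, triviality of $G_{0,\infty}$ lets one transport an arbitrary pair to $(0,\infty)$ and read off uniqueness.

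So it suffices to exhibit a non-trivial element of the two-point stabiliser. For the first assertion, fix distinct $0,\infty\in\bd T$ and let $\ell$ be the geodesic line joining them. Since $G$ is $2$-transitive on $\bd T$, Lemma~\ref{lem:Hua} applies and gives a decomposition $G_{0,\infty}=H_c\rtimes\la t\ra$ with $t$ acting as a non-trivial translation along $\ell$; in particular $G_{0,\infty}\neq 1$, so $G$ is not sharply $2$-transitive. For the second assertion, assume $G$ is boundary-Moufang; then the Hua subgroup $H=G^+_{0,\infty}$ again decomposes by Lemma~\ref{lem:Hua} as $H_c\rtimes\la t\ra$ with $t$ a non-trivial translation along $\ell$ (concretely, $t=\tau\tau'$ for $\mu$-maps $\tau,\tau'$ fixing two adjacent vertices of $\ell$, as in Lemma~\ref{lem:Mu}). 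Hence $G^+_{0,\infty}\neq 1$ and $G^+$ is not sharply $2$-transitive on $\bd T$.

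I do not expect any genuine obstacle here: the content is entirely contained in Lemma~\ref{lem:Hua}, which already produces the required hyperbolic/translation element, and the only thing to check is the elementary equivalence between sharp $2$-transitivity and triviality of the two-point stabiliser. The one point to state carefully is that the translation $t$ is genuinely non-trivial, i.e. has positive translation length, which is exactly what Lemma~\ref{lem:Hua} asserts.
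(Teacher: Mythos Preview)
Your proposal is correct and follows exactly the same approach as the paper: both arguments simply invoke Lemma~\ref{lem:Hua} to produce a non-trivial translation $t\in G_{0,\infty}$ (resp.\ $t\in G^+_{0,\infty}$), which witnesses that the two-point stabiliser is non-trivial. Your version merely spells out in more detail the elementary equivalence between sharp $2$-transitivity and triviality of the two-point stabiliser, which the paper leaves implicit.
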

\begin{proof}
Indeed, the translation $t \in G_{0, \infty}$ (or $t \in G^+_{0,\infty}$, respectively) appearing in Lemma~\ref{lem:Hua} is a non-trivial element of $G$ (or $G^+$, respectively) which fixes two points in $\partial T$.
\end{proof}

\section{Structure of the root groups}

 \subsection{Fixed points of root subgroups}

Our next goal is to give a more precise description of the fixed-point set of a root group element. 
We already know from Lemma~\ref{lem:RootGroupElliptic} that every root group element fixes pointwise some geodesic ray,
but the following statement is more detailed.

\begin{lem}\label{lem:psi(n)}
Let $G \leq \Aut(T)$ be boundary-Moufang.
Then there is a non-decreasing map $\psi\colon \NN \to \NN$ with $\psi(0) = 0$ and  $\psi(n) \geq n$ such that for all $\infty \in \bd T$ and $v \in T$, the group $U_{\infty, v}$ fixes pointwise the $n$-ball around every vertex $x \in (\infty, v]$ with $d(v, x) \geq \psi(n)$. 
%
%
\end{lem}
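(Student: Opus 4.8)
The plan is to derive the statement from the Baumgartner--Willis description of contraction groups (already exploited in the proof of Lemma~\ref{lem:parab}), together with a compactness argument and the cocompactness of the $G$-action. First fix $\infty \in \bd T$ and a vertex $v$. By Lemma~\ref{lem:Hua} the two-point stabiliser $G_{0,\infty}$ contains a hyperbolic element (a non-trivial translation along the line $\ell$ through $0$ and $\infty$); replacing it by its inverse if needed, we obtain a hyperbolic $\alpha \in G$ whose \emph{repelling} fixed point is $\infty$. By Lemma~\ref{lem:RootGroups:contraction}(i) we have $U_\infty \le U_\alpha$, and by the geometric characterisation of $U_\alpha$ recalled in the proof of Lemma~\ref{lem:parab}, every $u \in U_\alpha$ fixes pointwise some ray $\rho$ to $\infty$ together with balls $B(\rho(m), r_m)$ satisfying $r_m \to \infty$. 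Hence, for $u \in U_{\infty,v}$: since $u$ fixes $v$ and some ray to $\infty$, it fixes pointwise the ray $(\infty, v]$ (so $\psi(0) = 0$ will be admissible); and since $\rho$ shares a subray with $(\infty, v]$, for every $n$ there is $N_u \in \NN$ such that $u$ fixes pointwise the $n$-ball around every vertex $x \in (\infty, v]$ with $d(v, x) \ge N_u$.

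Next I would upgrade this to a bound uniform over the compact group $K := U_{\infty,v}$ (it is closed in the compact vertex stabiliser $\Aut(T)_v$). Fix $n$; for $M \in \NN$ put
\[
A_M = \{\, g \in K \mid g \text{ fixes } B(x,n) \text{ pointwise for every vertex } x \in (\infty,v] \text{ with } d(v,x) \ge M \,\}.
\]
Each $A_M$ is a closed subgroup of $K$, the chain $(A_M)_M$ is increasing, and the previous paragraph gives $K = \bigcup_M A_M$. Since $K$ is a compact (hence Baire) space and this is a countable union, some $A_{M_0}$ has non-empty interior; being a subgroup, it is then open, hence of finite index in $K$. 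Choosing coset representatives $u_1, \dots, u_r$ of $A_{M_0}$ in $K$ and integers $M_i$ with $u_i \in A_{M_i}$, the subgroup $A_{M^\ast}$ with $M^\ast := \max(M_0, M_1, \dots, M_r)$ contains $A_{M_0}$ together with all the $u_i$, whence $A_{M^\ast} = K$. Thus $U_{\infty,v}$ fixes $B(x,n)$ pointwise for every vertex $x \in (\infty,v]$ with $d(v,x) \ge M^\ast$; write $N(\infty,v,n)$ for the least such $M^\ast$.

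It remains to bound $N(\infty,v,n)$ independently of $(\infty,v)$. The group $G$ is transitive on $\bd T$, and $G_\infty$ has only finitely many orbits on the vertices of $T$: indeed $U_\infty$ is transitive on each horosphere centred at $\infty$ (any two vertices at the same Busemann level lie on lines $(\infty,\xi_1)$, $(\infty,\xi_2)$, and any $u \in U_\infty$ with $u.\xi_1 = \xi_2$ carries one to the other, as it fixes a ray to $\infty$), while the translation $t \in G_{0,\infty}$ of Lemma~\ref{lem:Hua} identifies Busemann levels modulo its translation length, so already $\langle U_\infty, t\rangle \le G_\infty$ has finitely many vertex orbits. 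Since $G$ permutes the root groups ($U_\eta^{g} = U_{g.\eta}$) and acts isometrically, $N(g.\infty, g.v, n) = N(\infty,v,n)$; hence for fixed $n$ the quantity $N(\infty,v,n)$ takes only finitely many values. Setting
\[
\psi(n) := \max\Bigl(n,\ \max_{m \le n}\ \max_{(\infty,v)} N(\infty,v,m)\Bigr)
\]
gives a non-decreasing map $\psi \colon \NN \to \NN$ with $\psi(0) = 0$ (as $N(\cdot,\cdot,0) \equiv 0$), $\psi(n) \ge n$, and the asserted property.

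The step I expect to be the main obstacle is the compactness argument in the middle: Baire's theorem only produces an open finite-index subgroup of $K$ on which a single threshold works, so one must patch over finitely many cosets; and, upstream of that, one has to be sure that for each fixed $n$ the estimate coming from the contraction-group description really furnishes a single $N_u$ valid for \emph{all} sufficiently distant vertices of $(\infty,v]$, not merely for those lying on the particular ray $\rho$ attached to $u$.
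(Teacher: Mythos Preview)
Your proof is correct and follows the same overall architecture as the paper's: root groups sit inside a contraction group (Lemma~\ref{lem:RootGroups:contraction}), one upgrades the pointwise contraction to a uniform bound on the compact set $U_{\infty,v}$, and then cocompactness of $\langle t\rangle U_\infty$ on $T$ together with transitivity of $G$ on $\bd T$ reduces to finitely many pairs $(\infty,v)$. The only substantive difference is in the uniformisation step: the paper quotes Wang's uniform-contraction lemma (stated there as Lemma~\ref{lem:UnifContraction}, \cite[Lemma~1.4(iv)]{Wang84}) to pass directly from pointwise to uniform contraction on the compact $U_{\infty,v}$, whereas you reprove this special case by hand via the Baire category argument on the increasing chain $(A_M)$ of closed subgroups; the two arguments are essentially the same under the hood, so your concern about the Baire step being the ``main obstacle'' is unwarranted---it works exactly as you wrote it.
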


We shall use the following basic fact.

\begin{lem}\label{lem:UnifContraction}
Let $U$ be a locally compact group and $\alpha \in \Aut(U)$ be such that 
\[ \lim_{n \to \infty} \alpha^n(u)=1 \]
for all $u \in U$. Then $\alpha $ is uniformly contracting in the following sense: for each compact subset $K \subset U$ and each identity neighbourhood $V \subset U$ there is some $N$ such that $\alpha^n(K) \subset V$ for all $n >N$. 
\end{lem}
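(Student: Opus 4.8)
The plan is to prove Lemma~\ref{lem:UnifContraction}, the statement that pointwise contraction of an automorphism $\alpha$ of a locally compact group $U$ upgrades to uniform contraction on compacta. This is a standard Baire-category-plus-compactness argument, and I expect no serious obstacle; the only point requiring a little care is handling the topology of $U$ correctly (no metric is assumed).

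First I would fix a compact set $K \subseteq U$ and an identity neighbourhood $V \subseteq U$. Shrinking $V$, we may assume $V$ is a symmetric relatively compact open identity neighbourhood, and by continuity of multiplication we may pick a symmetric open identity neighbourhood $W$ with $W W \subseteq V$ (or even $W^3 \subseteq V$ if convenient). The key is to show $\alpha^n(K) \subseteq W$ for all large $n$; since $W \subseteq V$ this suffices. For each $u \in U$, the hypothesis $\lim_{n\to\infty}\alpha^n(u)=1$ gives an integer $N_u$ with $\alpha^m(u) \in W$ for all $m \geq N_u$. Now I would fix one such exponent and use continuity: the map $\alpha^{N_u}$ is continuous, so the preimage $(\alpha^{N_u})^{-1}(W)$ is an open neighbourhood of $u$. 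But this alone does not control $\alpha^m$ for $m > N_u$, so instead I would consider, for each fixed $m$, the open set $O_m = \bigcap_{j \geq m}(\alpha^j)^{-1}(W)$ — except this is an intersection of infinitely many open sets and need not be open. The correct device is the following: define $E_m = \{ u \in U \mid \alpha^j(u) \in \overline{W} \text{ for all } j \geq m\}$. Each $E_m$ is closed (it is an intersection of the closed sets $(\alpha^j)^{-1}(\overline W)$), the $E_m$ are nested increasing, and by the pointwise hypothesis $\bigcup_m E_m = U$. Hence $\bigcup_m (E_m \cap K) = K$ is a countable union of closed subsets of the compact — hence Baire — space $K$, so some $E_{m_0} \cap K$ has nonempty interior in $K$; call this relatively open set $\Omega$, and fix $u_0 \in \Omega$.

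From the local conclusion I would then propagate to all of $K$ using a translation trick together with compactness. Having a nonempty relatively open $\Omega \subseteq K$ on which $\alpha^j$ lands in $\overline W$ for all $j \geq m_0$, I write, for arbitrary $k \in K$, $k = (k u^{-1}) \cdot u$ for a suitable $u \in \Omega$; the set $\{ k u_0^{-1} \mid k \in K\}$ is compact, and covering $K$ by finitely many translates of $\Omega^{-1}\Omega$ (or rather using that $K u_0^{-1}$ is compact and applying the pointwise hypothesis to each of finitely many elements, then intersecting the finitely many resulting thresholds) yields a single exponent $N_1$ with $\alpha^j(k u_0^{-1}) \in W$ for all $k$ in a chosen finite subcover and all $j \geq N_1$ — here I must be slightly careful and instead argue: since $K \subseteq \bigcup_{i=1}^r g_i \Omega$ for finitely many $g_i$ (because $\{g\Omega\}$ is an open cover of the compact $K$ after translating, using that $\Omega$ is relatively open in $K$ — in fact $\Omega$ open in $K$ means $\Omega = O \cap K$ for $O$ open in $U$, and the translates $gO$ cover $K$), each $g_i$ satisfies $\alpha^j(g_i) \in W$ for $j \geq N_{g_i}$, and for $k \in g_i\Omega$ we have $k = g_i u$ with $u \in \Omega$, so $\alpha^j(k) = \alpha^j(g_i)\alpha^j(u) \in W \overline{W} \subseteq \overline{V}$ for all $j \geq \max(m_0, N_{g_i})$. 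Taking $N = \max_i \max(m_0, N_{g_i})$ gives $\alpha^n(K) \subseteq \overline V$ for all $n \geq N$. Since $V$ was an arbitrary identity neighbourhood and we may always replace $V$ by a smaller one whose closure is contained in the originally given neighbourhood, this proves the claim.

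The main (and really only) obstacle is the Baire-category step: one must phrase the "threshold" sets as genuinely \emph{closed} subsets so that Baire's theorem on the compact space $K$ applies, rather than working with open sets whose infinite intersections misbehave — hence the use of $\overline W$ in the definition of $E_m$ and the enlargement of $V$. Everything else is routine manipulation with identity neighbourhoods and finite subcovers of a compact set. I would present the argument in that order: (1) reduce to finding $N$ with $\alpha^n(K)\subseteq \overline V$; (2) Baire category on $K$ to get a relatively open $\Omega$ with uniform control; (3) finite subcover of $K$ by translates of an open set meeting $K$ in $\Omega$, combined with pointwise control at the finitely many translating elements, to conclude.
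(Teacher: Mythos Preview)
The paper does not prove this lemma at all: its ``proof'' is a one-line citation to \cite[Lemma~1.4(iv)]{Wang84}. Your Baire-category argument is the standard route and is essentially correct, so in that sense you have supplied more than the paper does.

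There is, however, a genuine slip in your covering step. You apply Baire on the compact space $K$, obtaining a set $\Omega$ which is only \emph{relatively} open in $K$, say $\Omega = O \cap K$ with $O$ open in $U$, and $\Omega \subseteq E_{m_0}$. You then cover $K$ by finitely many translates $g_i O$ and, for $k \in g_i O$, write $k = g_i u$ with $u = g_i^{-1}k \in O$. At this point you assert $u \in \Omega$, hence $\alpha^j(u) \in \overline W$ for $j \geq m_0$; but you only know $u \in O$, not $u \in K$, so there is no reason for $u$ to lie in $\Omega = O \cap K \subseteq E_{m_0}$. The conclusion $K \subseteq \bigcup_i g_i \Omega$ that you state does not follow from $K \subseteq \bigcup_i g_i O$.

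The fix is immediate: apply the Baire category theorem to $U$ itself, which is locally compact Hausdorff and hence a Baire space. Since $U = \bigcup_m E_m$ with each $E_m$ closed, some $E_{m_0}$ has nonempty interior $O$ \emph{in $U$}. Now cover $K$ by finitely many translates $g_1 O, \dots, g_r O$; for $k \in g_i O$ you get $u = g_i^{-1}k \in O \subseteq E_{m_0}$ directly, and the rest of your argument (pointwise control of the finitely many $g_i$, the product estimate $\alpha^j(k) = \alpha^j(g_i)\alpha^j(u) \in W\overline W$, and the final shrinking of $V$) goes through unchanged.
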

\begin{proof}
See \cite[Lemma~1.4(iv)]{Wang84}. 
\end{proof}

\begin{proof}[Proof of Lemma~\ref{lem:psi(n)}]
%
Let $\infty \in \bd T$. By Lemma~\ref{lem:RootGroups:contraction}, the root group $U_\infty$ is contained in the contraction group $U_\alpha$ of every hyperbolic element $\alpha$ having $\infty$ as a repelling fixed point. Lemma~\ref{lem:Hua} implies that $G$ contains such a hyperbolic element. 

Now Lemma~\ref{lem:UnifContraction} implies that for each vertex $v$ of $T$, there is a non-decreasing map $\psi_{\infty, v} \colon \NN \to \NN$ with $\psi_{\infty, v}(n) \geq n$, such that the group $U_{\infty, v}$ fixes pointwise the $n$-ball around every vertex $x \in (\infty, v]$ with $d(v, x) \geq \psi_{\infty, v}(n)$.  

Since $U_\infty$ is transitive on $\bd T \setminus \{\infty\}$, we deduce that $ \la \alpha\ra U_\infty$ acts cocompactly on~$T$. Let $v_1, \dots, v_n$ be a finite set of representatives of $ \la \alpha\ra U_\infty$-orbits of vertices. Set $\psi_{\infty}(n) = \max \{\psi_{\infty, v_i}(n) \mid i=1, \dots, n\}$. This map satisfies the property that for all vertices $v$,  the group  $U_{\infty, v}$ fixes pointwise the $n$-ball around every vertex $x \in (\infty, v]$ with $d(v, x) \geq \psi_{\infty}(n)$.

The desired assertion now follows from the fact that $G$ is boundary-transitive,  and hence the map $\psi = \psi_\infty$ satisfies  the requested property.
\end{proof}

\subsection{Closed contraction groups}

Given a locally compact group $U$, we denote by $\Aut(U)$ the group of continuous automorphisms of $U$. An automorphism $\alpha \in \Aut(U)$ is called \textbf{contracting} if $\lim_{n \to \infty} \alpha^n(u)=1$ for all $u \in U$. 
The following result is due to H.~Gl\"ockner and G.~Willis. 

\begin{thm}\label{thm:GlocknerWillis}
Let $U$ be a totally disconnected locally compact group and $\alpha \in \Aut(U)$ be a contracting automorphism. 

Then the set $T$ of torsion elements of $U$ is a closed characteristic subgroup. Moreover there are some $r \geq 0$ and some primes $p_1, \dots, p_r$ such that $U$ has an $\alpha$-invariant decomposition $U \cong V_1 \times \dots \times V_r \times T$,
where each $V_i$ is a nilpotent $p_i$-adic analytic group.
\end{thm}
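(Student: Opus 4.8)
The plan is to reduce the statement to the study of a single profinite group equipped with a contracting endomorphism, and then to invoke the structure theory of $p$-adic analytic pro-$p$ groups. First I would use Lemma~\ref{lem:UnifContraction} to produce a compact open subgroup $W \le U$ with $\alpha(W) \le W$: starting from an arbitrary compact open subgroup and applying uniform contraction to shrink it, one obtains such an $\alpha$-contracted $W$, which moreover satisfies $\bigcap_{n \ge 0} \alpha^n(W) = 1$. Since $\alpha$ is contracting, every $u \in U$ has $\alpha^n(u) \in W$ for all large $n$, so that $U = \bigcup_{n \ge 0} \alpha^{-n}(W)$ is a directed union of compact open subgroups. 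Any $\alpha$-invariant decomposition of $W$ will then propagate to $U$ by applying the negative powers of $\alpha$, so the whole problem is concentrated in the profinite group $W$ together with its injective contracting endomorphism $\alpha|_W$.

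The next step is to extract the finite-rank structure. Because $F := W / \alpha(W)$ is finite and every open subgroup of $W$ contains $\alpha^n(W)$ for some $n$, the group $W$ is topologically finitely generated and of finite rank, and the only relevant primes $p_1, \dots, p_r$ are those dividing $|F|$. By Lazard's characterisation, a finitely generated pro-$p$ group of finite rank is $p$-adic analytic; organising $W$ according to its pro-$p_i$ structure, and using that $\alpha$ is an automorphism (hence compatible with the Sylow filtration of each layer $\alpha^n(W)/\alpha^{n+1}(W) \cong F$), yields $\alpha$-invariant $p_i$-adic analytic components. To isolate the torsion, one shows that in this finite-rank filtration an element of finite order is forced into a uniformly bounded layer, so that $T$ is the inverse limit of the torsion parts of the finite quotients and is therefore closed; it is characteristic because torsion is preserved by every continuous automorphism.

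Passing to the torsion-free quotient $U/T$, which carries the induced contracting automorphism, each prime-component $V_i$ becomes a finitely generated torsion-free pro-$p_i$ group, hence uniformly powerful and $p_i$-adic analytic, with associated $\QQ_{p_i}$-Lie algebra $\mathfrak{g}_i$. Nilpotency then comes from the Lie algebra level: $\alpha$ differentiates to an automorphism $d\alpha$ of $\mathfrak{g}_i$ all of whose eigenvalues have $p_i$-adic absolute value strictly less than $1$, and since the bracket sends the $\lambda$- and $\mu$-eigenspaces into the $\lambda\mu$-eigenspace, iterated brackets have eigenvalues of strictly decreasing absolute value and must eventually vanish; thus $\mathfrak{g}_i$, and hence $V_i$, is nilpotent. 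It then remains to assemble the internal direct product $U \cong V_1 \times \dots \times V_r \times T$ and to check its $\alpha$-invariance.

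I anticipate that the genuinely hard part is establishing this splitting as an \emph{internal direct product} --- both the decomposition of the torsion-free quotient into the distinct prime components $V_i$ and the splitting of $T$ off as a direct factor --- together with the proof that $T$ is closed; the reduction to $W$ and the nilpotency argument at the Lie algebra level are comparatively routine once the finite-rank framework is in place. These are precisely the technical points for which I would ultimately cite the theorem of Gl\"ockner and Willis.
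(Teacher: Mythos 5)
The paper offers no proof of this statement: it is quoted verbatim from Gl\"ockner--Willis and the ``proof'' is the citation \cite[Theorem~B]{GloecknerWillis}. Your final paragraph, which defers the splitting and the closedness of $T$ to that same reference, is therefore consistent with what the paper does. The problem is that the steps you present as ``comparatively routine'' contain a genuine error. Your second paragraph claims that, because $F = W/\alpha(W)$ is finite, the $\alpha$-contracted compact open subgroup $W$ is topologically finitely generated and of finite rank. This is false in general. Take the example the paper itself uses in the proof of Corollary~\ref{cor:contraction_abelianization}: $U \cong (\bigoplus_{n<0} F) \times (\prod_{n \geq 0} F)$ with $\alpha$ the positive shift. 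Then $W = \prod_{n \geq 0} F$ is an $\alpha$-contracted compact open subgroup with $W/\alpha(W) \cong F$ finite, yet for $F = \FF_p$ the group $W$ is an infinite direct power of $\FF_p$: it has infinite rank and is not topologically finitely generated (its Frattini quotient is $W$ itself). This is precisely why the theorem must split off a torsion factor $T$ about which no analyticity or finite-rank claim is made. As a consequence, the ``finite-rank filtration'' from which you propose to deduce that torsion elements sit in a uniformly bounded layer, and hence that $T$ is closed, does not exist; in the example above \emph{every} element is torsion. Closedness of the set of torsion elements (indeed, the fact that it is a subgroup at all -- this fails for general totally disconnected locally compact groups) is one of the genuinely hard points of the theorem, not a consequence of finite rank.

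A second, related gap: ``organising $W$ according to its pro-$p_i$ structure'' presupposes that $W$ admits a Sylow-type direct decomposition, i.e.\@ that $W$ is pro-nilpotent. That is essentially part of the conclusion of the theorem, not something available at the outset; an arbitrary profinite group with a contracting injective endomorphism has no a priori Sylow decomposition. What survives of your sketch is the reduction to an $\alpha$-contracted compact open $W$ via Lemma~\ref{lem:UnifContraction} (correct, and used by the paper elsewhere) and the eigenvalue argument showing that the Lie algebra of a torsion-free $p$-adic analytic contraction group is nilpotent (correct: all eigenvalues of $d\alpha$ have absolute value $<1$, the $k$-th term of the lower central series has eigenvalues that are $k$-fold products, and these must eventually drop below the minimum absolute value of an eigenvalue on $\mathfrak g$). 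But the core of the theorem --- the closedness of $T$ and the internal direct product decomposition --- is not reached by your argument, so the proposal in the end reduces, as the paper does, to citing \cite[Theorem~B]{GloecknerWillis}.
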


\begin{proof}
See \cite[Theorem~B]{GloecknerWillis}.
\end{proof}

We point out the following consequence of the work by H.~Gl\"ockner and G.~Willis.  

\begin{cor}\label{cor:contraction_abelianization}
Let $H = \la \alpha \ra \ltimes U$ be the semi-direct product of a totally disconnected locally compact group $U$ with the cyclic group generated by a contracting automorphism $\alpha\in \Aut(U)$. 

Then $U = [H, H]$. In particular the derived group $[H, H]$ is open and closed in $H$. 
\end{cor}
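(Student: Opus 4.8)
The plan is to show directly that the commutator subgroup $[H,H]$ equals $U$. One inclusion is immediate: since $U$ is normal in $H$ with cyclic quotient $H/U \cong \la\alpha\ra$, which is abelian, we have $[H,H] \le U$. So the whole content is the reverse inclusion $U \le [H,H]$, and for this the key is to exploit the contracting automorphism $\alpha$. The elementary observation is that for any $u \in U$ one has the telescoping identity
\[
u = \bigl(u \cdot \alpha(u)\inv\bigr)\bigl(\alpha(u) \cdot \alpha^2(u)\inv\bigr) \cdots \bigl(\alpha^{N-1}(u)\cdot\alpha^N(u)\inv\bigr)\cdot \alpha^N(u),
\]
and each factor $\alpha^k(u)\cdot\alpha^{k+1}(u)\inv = \alpha^k(u)\cdot\alpha(\alpha^k(u))\inv$ is of the form $w\cdot\alpha(w)\inv = w\cdot(\alpha w\alpha\inv)\inv$, hence lies in $[H,H]$ (viewing $\alpha$ as an element of $H$). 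So the product of the first $N$ factors lies in $[H,H]$; the problem is the tail term $\alpha^N(u)$, which does not obviously vanish.

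Here is where closedness of $[H,H]$ in $H$ must be established first — this is the main obstacle, and it is exactly what Theorem~\ref{thm:GlocknerWillis} is for. By that theorem (applied to the totally disconnected locally compact group $U$ and the contracting automorphism $\alpha$), the torsion subgroup $T \le U$ is closed and $U$ decomposes $\alpha$-equivariantly as $U \cong V_1 \times \cdots \times V_r \times T$ with each $V_i$ a nilpotent $p_i$-adic analytic group. Since $\alpha$ is contracting, $\lim_{n\to\infty}\alpha^n(u) = 1$ for every $u$; combined with Lemma~\ref{lem:UnifContraction}, $\alpha$ contracts compact sets uniformly into any identity neighbourhood. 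I would use this structural decomposition to see that $[H,H]$ is closed: on each $p_i$-adic analytic factor $V_i$, a contracting automorphism forces $[H,H]\cap V_i$ to be open (a standard fact for contracting automorphisms of $p$-adic analytic groups — the relevant commutator subgroup is open, cf.\@ the Lie-algebra computation, or one invokes \cite{GloecknerWillis} directly), and openness plus the fact that $[H,H]$ is a subgroup gives closedness; similarly one handles the torsion part $T$. Granting that $[H,H]$ is closed, the tail term $\alpha^N(u)$ converges to $1$ as $N\to\infty$, and since the product of the first $N$ telescoping factors equals $u\cdot\alpha^N(u)\inv \in [H,H]$, passing to the limit in the closed set $[H,H]$ yields $u \in [H,H]$. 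Hence $U \le [H,H]$, so $U = [H,H]$, and since $U$ is open and closed in $H$ (it is the kernel of $H \to \la\alpha\ra$, and $\la\alpha\ra$ is discrete being generated by an element acting as a contracting, hence non-trivial, automorphism), the final assertion follows.

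I expect the delicate point to be the claim that $[H,H]$ is closed: one genuinely needs the $p$-adic analytic structure from Theorem~\ref{thm:GlocknerWillis} rather than soft topological-group arguments, because in general commutator subgroups of locally compact groups need not be closed (the unrestricted lamplighter example in the introduction illustrates precisely this failure when the automorphism is merely contracting on a product of finite groups — but there the ambient group is not of the split form $\la\alpha\ra\ltimes U$ with $U$ itself the contracted part, so the telescoping argument combined with the structure theorem is what saves us). Everything else — the telescoping identity, the limit argument, and the $U \le [H,H]$ deduction — is routine once closedness is in hand.
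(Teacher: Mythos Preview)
Your telescoping identity is correct and elegant: it shows immediately that $u\,\alpha^N(u)^{-1}\in [H,H]$ for every $N$, and since $\alpha^N(u)\to 1$ this yields $U\subseteq\overline{[H,H]}$. Combined with the easy inclusion $[H,H]\subseteq U$, you have established $\overline{[H,H]}=U$ with almost no effort. So far so good.

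The genuine gap is your step 4: proving that $[H,H]$ is closed. For the $p_i$-adic analytic factors $V_i$ your Lie-algebra sketch is sound --- the map $x\mapsto x\,\alpha(x)^{-1}$ has derivative $I-d\alpha$ at the identity, which is invertible since $\alpha$ is contracting, so by the $p$-adic inverse function theorem its image contains an open neighbourhood of $1$ in $V_i$. But your phrase ``similarly one handles the torsion part $T$'' is not justified: $T$ has no Lie algebra and no inverse function theorem, so the analytic argument simply does not transfer. To show that $[H,H]$ meets $T$ in an open set, one is forced to use the finer structure of $T$ (its composition series by restricted-product groups $(\bigoplus_{n<0}F)\times(\prod_{n\ge0}F)$ with $\alpha$ acting as the shift) and to solve $[\alpha,x]=v$ explicitly on each such factor --- which is exactly what the paper does.

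The paper's route is therefore more economical: rather than first proving closedness and then passing to the limit, it uses the Gl\"ockner--Willis composition series directly and shows that on each simple factor $V$ (whether a $p$-adic vector space or a shift group), \emph{every} $v\in V$ is a single commutator $[\alpha,x]$ with $x\in V$. This gives $V\subseteq [H,H]$ outright, and induction up the finite series gives $U\subseteq [H,H]$, bypassing any closure argument. Your telescoping approach would become a genuine alternative only if you could prove openness of $[H,H]$ in $U$ without this case analysis --- but for the torsion part no such shortcut is apparent.
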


\begin{proof}
We need to prove that any (abstract) homomorphism $\varphi \colon H \to A$ to an abelian group factors through $H/U$. The structure theory of contraction groups developed by Gl\"ockner--Willis~\cite{GloecknerWillis} ensures that $U$ has a finite $\alpha$-stable composition series, all of whose subquotients  are simple contraction groups. In particular, by induction it suffices to show that the homomorphism $\varphi \colon H \to A$ factors through $H/V$, where $V \leq U$ is a minimal $\alpha$-stable closed normal subgroup. 

Now $V$ is a simple contraction group under the $\alpha$-action, and \cite[Theorem~A]{GloecknerWillis} shows that there are two possibilities: either $V$ is isomorphic to a finite-dimensional $p$\nobreakdash-adic vector space for some prime $p$, and $\alpha$ acts as a contracting linear transformation, or there is a finite simple group $F$ such that $V \cong (\bigoplus_{n <0} F) \times (\prod_{n \geq 0} F)$ and $\alpha$ acts as the positive shift. 

In the first case, it is a matter of elementary linear algebra to check that for all $v \in V$, the equation $[\alpha, x] = v$ has a solution $x \in V$: the point is that $\alpha$ is contracting, and hence does not have eigenvalues of norm~$1$. In the second case, the equation $[\alpha, x] = v$ also has a solution $x \in V$ for all $v \in V$. Indeed, decomposing that equation componentwise according to  $V \cong (\bigoplus_{n <0} F) \times (\prod_{n \geq 0} F)$, one finds that the equation is equivalent to the system of equations
\[
\begin{cases}
x_{j-1} x_j\inv = 1  & \text{ for all } j < i \\
x_{j-1} x_j\inv = v_j  & \text{ for all } j \geq i
\end{cases}
\]
where $i$ is the smallest index of a non-zero component of $v$. Clearly this system admits a (unique) solution. This confirms that $V \leq [H, H] \leq \ker \varphi$, thereby finishing the proof.
\end{proof}

\subsection{The little projective group is simple}\label{ss:G+simple}

We have seen in Proposition~\ref{prop:LittleProjMonolith} that if $G \leq \Aut(T)$ is boundary-Moufang, then the closure $\overline{G^+}$ of the little projective group is topologically simple (and remains boundary-Moufang). One expects $G^+$ to be closed in $G$ (hence topologically simple) in full generality. At this point, we can only establish the following, the proof of which follows closely a well-known argument due to Tits \cite{Tits:64}. 

\begin{prop}\label{prop:LittleProjSimple}
Let  $G \leq \Aut(T)$ be boundary-Moufang with soluble root groups. Then the little projective group $G^+$ is simple (as an abstract group).
\end{prop}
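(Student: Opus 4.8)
The plan is to adapt the classical Tits simplicity argument (the ``transitive + abelian normal subgroup of a point stabiliser generating the whole group'' scheme) to the Moufang-set situation. Write $G^+ = \la U_\xi \mid \xi \in \bd T\ra$ and recall that $G^+$ acts $2$-transitively on $\bd T$ with the root groups forming a full conjugacy class. Let $N \trianglelefteq G^+$ be a non-trivial normal subgroup; we must show $N = G^+$. First I would check that $N$ cannot fix a point of $\bd T$: if it did, then by normality and the transitivity of $G^+$ on $\bd T$ it would fix all of $\bd T$, hence fix $T$ pointwise (two ends determine a line, and $2$-transitivity lets us pin down enough lines), so $N = 1$, a contradiction. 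Hence $N$ is transitive on $\bd T$ (a non-trivial normal subgroup of a $2$-transitive group is transitive). Consequently $G^+ = N \cdot G^+_\xi$ for any end $\xi$, where $G^+_\xi = \Stab_{G^+}(\xi)$.

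The core of the argument is then the following standard reduction: since $G^+ = \la U_\xi \mid \xi \ra$ and all $U_\xi$ are conjugate in $G^+$, it suffices to show that a single root group $U_\infty$ is contained in $N$; indeed then $N \supseteq U_\infty^{G^+} = \bigcup_\xi U_\xi$, so $N \supseteq G^+$. To get $U_\infty \le N$ I would use that $G^+$ is generated by two opposite root groups $U_\infty, U_0$ together with solubility. Here is where the hypothesis that the root groups are soluble enters decisively. By the Moufang structure, $U_\infty$ is normal in $G^+_\infty = H \ltimes U_\infty$ where $H = G^+_{0,\infty}$ is the Hua subgroup (by the Tits--De Medts--Weiss theorem $G^+_{0,\infty} = H$). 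Now run the Tits commutator trick: take $1 \neq u \in U_\infty$ and $1 \neq n \in N$; using transitivity of $N$ on $\bd T$ together with sharp transitivity of $U_\infty$ on $\bd T \setminus \{\infty\}$, one can choose $n$ so that the commutator $[u, n]$ lies in $N \cap U_\infty$ and is non-trivial. Then $N \cap U_\infty$ is a non-trivial subgroup of $U_\infty$ which is normalised by $G^+_\infty$ (being the intersection of two normal subgroups of $G^+_\infty$), and one pushes this, via the derived series of the soluble group $U_\infty$ and the transitivity properties of the $\mu$-maps / Hua subgroup, down to conclude $N \cap U_\infty = U_\infty$, i.e.\@ $U_\infty \le N$.

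More precisely, for the last step I would argue by induction on the derived length of $U_\infty$: let $A$ be the last non-trivial term of the derived series of $U_\infty$, an abelian subgroup characteristic in $U_\infty$, hence normal in $G^+_\infty$. Since $N$ is transitive on $\bd T$ and $U_\infty$ is sharply transitive on $\bd T \setminus\{\infty\}$, an element of $N$ not fixing $\infty$ conjugates $U_\infty$ to a distinct root group, and a commutator computation (exactly as in Tits's lemma for groups with a split BN-pair of rank~$1$, see \cite{Tits:64}) shows $[A, N] \neq 1$ lies in $N$; combined with $2$-transitivity of $G^+$ (so that $G^+ = \la U_\infty, U_0\ra$ and $A$ together with a $\mu$-map generate a large subgroup), one forces a non-trivial element of $N$ inside $U_\infty$ that is moved around by $H$ and the $\mu$-maps to fill out $U_\infty$. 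The main obstacle I anticipate is precisely this final descent: making the commutator trick produce a non-trivial element of $N \cap U_\infty$ and then bootstrapping from the abelian bottom layer $A$ up to all of $U_\infty$ while only using solubility (not, say, nilpotence or abelianness) of the root groups — this is the delicate point where one has to invoke the full strength of the Moufang-set machinery (Proposition~\ref{prop:mu}, the Hua subgroup description, and the $2$-transitivity of $G^+$) rather than a purely formal normal-subgroup argument.
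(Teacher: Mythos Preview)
Your opening is right and matches the paper: a non-trivial normal $N \trianglelefteq G^+$ must act transitively on $\bd T$, hence $G^+ = N \cdot G^+_\xi$. But from here the paper takes a much shorter route than your commutator/derived-series plan, and the difference is exactly the missing idea in your argument.

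The paper first strengthens $G^+ = N \cdot G^+_\xi$ to $G^+ = N \cdot U_\xi$: since $U_\xi$ is normal in $G^+_\xi$, every $G^+$-conjugate of $U_\xi$ is already an $N$-conjugate, and since $N$ is normal these all lie in the subgroup $N U_\xi$; as $G^+$ is generated by the conjugates of $U_\xi$, one gets $G^+ = N U_\xi$. Hence $G^+/N \cong U_\xi/(U_\xi \cap N)$ is soluble. The decisive input you are missing is that $G^+$ is \emph{perfect}: this is Corollary~\ref{cor:contraction_abelianization}, which uses that the root group is a closed contraction group (Lemma~\ref{lem:RootGroups:contraction}) together with the Gl\"ockner--Willis structure theory. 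A quotient of a perfect group is perfect, so $G^+/N$ is both perfect and soluble, hence trivial. That is the entire argument.

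Your proposed alternative, namely producing a non-trivial element of $N \cap U_\infty$ by a commutator trick and then climbing the derived series of $U_\infty$ using the Hua action, is not fleshed out at the crucial step (as you yourself flag). In particular, the claim that one can choose $n$ so that $[u,n] \in N \cap U_\infty$ is non-trivial does not follow from what you have written: for generic $n \in N$ not fixing~$\infty$, the commutator $[u,n]$ has no reason to lie in $U_\infty$. In the classical Tits argument for BN-pairs the analogous step is \emph{also} closed by appealing to perfectness of the group, not by a bare commutator computation; so the ingredient you are lacking is the same one. Once you invoke Corollary~\ref{cor:contraction_abelianization}, the whole ``descent through the derived series'' becomes unnecessary.
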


\begin{proof}
If $N$ is a non-trivial normal subgroup, then it is transitive on $\bd T$. Thus $G^+ = N.G^+_\xi$ for any $\xi \in \bd T$. Since the root group $U_\xi$ is normal in $G^+_\xi$, it follows that any two conjugates of $U_\xi$ in $G^+$ can be conjugated by an element of $N$. Since $G^+$ is generated by those conjugates, we infer that $G^+ = N.U_\xi$. Therefore $G^+/N \cong U_\xi / (U_\xi \cap N)$. But $U_\xi$ is soluble by hypothesis, while $G^+$ is perfect by Corollary~\ref{cor:contraction_abelianization}. Therefore $G^+/N$ must be trivial.
\end{proof}

\subsection{Abelian contraction groups of prime exponent}

It follows from Theorem~\ref{thm:GlocknerWillis} that if $U$ is an abelian contraction group which is torsion-free and locally pro-$p$ for some prime $p$, then $U$ is a finite-dimensional topological vector space over $\QQ_p$. The following observation is an analogue of that fact in positive characteristic. 

\begin{lem}\label{lem:PrimeExponent}
Let $p$ be a prime,  $U$ be an additive abelian totally disconnected locally compact group of exponent $p$ and $\alpha \in \Aut(U)$ be a contracting automorphism.

Then there is a finite-dimensional topological vector space $V$  over  $\FF_p(\!(t)\!)$ and a topological isomorphism $\varphi : U \to V$ such that $\varphi  \alpha \varphi\inv$ acts on $V$ as scalar multiplication by~$t$. 

In particular, the centraliser $\centra_{\End(U)}(\alpha)$ of $\alpha$ in the ring of continuous endomorphisms of $U$ is isomorphic to $\End_{ \FF_p(\!(t)\!)}(V)$. 
\end{lem}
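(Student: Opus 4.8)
The plan is to build the isomorphism $\varphi$ by finding, inside $U$, a finitely generated module over the polynomial ring $\FF_p[\alpha]$ whose $\alpha$-completion is all of $U$, and then to identify that module structure with a vector space over the Laurent series field $\FF_p(\!(t)\!)$ where $t$ acts as $\alpha$. Concretely, first I would invoke the Gl\"ockner--Willis structure theory (Theorem~\ref{thm:GlocknerWillis}, or rather its refinement for simple contraction groups used in the proof of Corollary~\ref{cor:contraction_abelianization}): since $U$ is abelian of exponent $p$, it is torsion, so the $p$-adic analytic factors $V_i$ do not occur and $U$ is built from finitely many $\alpha$-stable subquotients each isomorphic to $(\bigoplus_{n<0}\FF_p)\times(\prod_{n\geq 0}\FF_p)$ with $\alpha$ the shift. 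Each such simple piece is manifestly a one-dimensional $\FF_p(\!(t)\!)$-vector space (identify the $n$-th coordinate with the coefficient of $t^n$; the shift is multiplication by $t$), so the problem reduces to showing that a finite $\alpha$-stable filtration with one-dimensional Laurent-series subquotients forces $U$ itself to be a finite-dimensional $\FF_p(\!(t)\!)$-vector space, i.e.\@ that the extensions split topologically.

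The cleanest way to run the induction is via the ring $R = \centra_{\End(U)}(\alpha)$ directly. The key step is: $U$ is naturally a topological module over the (discrete) group ring $\FF_p[\alpha^{\pm 1}] = \FF_p[t,t^{-1}]$, and because $\alpha$ is \emph{contracting} (uniformly, by Lemma~\ref{lem:UnifContraction}), for any compact open subgroup $W\le U$ the maps $x\mapsto x + \alpha(x) + \alpha^2(x)+\cdots$ converge on $W$; equivalently the $\FF_p[t]$-action on $U$ extends continuously to an action of the completed local ring $\FF_p[[t]]$, and inverting $t$ (available since $\alpha$ is an automorphism) gives a continuous $\FF_p(\!(t)\!)$-module structure on $U$. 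Now choose a compact open $\alpha^{-1}$-stable subgroup $W$ (one exists: take any compact open $W_0$ and replace it by $\sum_{n\geq 0}\alpha^{-n}(W_0)$, which is still compact since contractivity makes the sum eventually stationary on bounded sets). Then $W$ is a compact $\FF_p[[t]]$-module on which $t=\alpha$ acts topologically nilpotently, $W/tW$ is a finite group killed by $p$ hence a finite-dimensional $\FF_p$-space, and a topological Nakayama argument shows $W$ is a finitely generated free $\FF_p[[t]]$-module of rank $d = \dim_{\FF_p} W/tW$. Inverting $t$, $U = W[t^{-1}]$ is a $d$-dimensional $\FF_p(\!(t)\!)$-vector space and $\varphi$ is the resulting coordinatisation; by construction $\varphi\alpha\varphi^{-1}$ is multiplication by $t$. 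The final sentence of the lemma is then immediate: a continuous endomorphism of $U$ commuting with $\alpha$ is exactly an $\FF_p(\!(t)\!)$-linear endomorphism of $V$ (continuity is automatic for linear maps of a finite-dimensional topological $\FF_p(\!(t)\!)$-vector space), so $\centra_{\End(U)}(\alpha) \cong \End_{\FF_p(\!(t)\!)}(V) \cong M_d(\FF_p(\!(t)\!))$.

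I expect the main obstacle to be the topological Nakayama / freeness step: one must be careful that $W$ is genuinely complete for the $t$-adic topology and that this topology coincides with the topology $W$ inherits as a compact open subgroup of $U$. The ingredient making this work is again uniform contraction (Lemma~\ref{lem:UnifContraction}): it guarantees that the descending chain $W \supseteq tW \supseteq t^2W \supseteq \cdots$ is a neighbourhood basis of $0$ in $W$ and that $\bigcap_n t^n W = 1$, so $W = \varprojlim W/t^nW$ topologically; from there the standard argument that a lift of an $\FF_p$-basis of $W/tW$ generates $W$ as an $\FF_p[[t]]$-module (convergence of the relevant series uses completeness), together with a rank count against the finite filtration from Gl\"ockner--Willis to rule out relations, finishes freeness. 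An alternative, perhaps shorter, route avoids Nakayama entirely: take the Gl\"ockner--Willis filtration $1 = U_0 < U_1 < \cdots < U_r = U$ with each $U_i/U_{i-1} \cong \FF_p(\!(t)\!)$ as $\alpha$-modules, and prove by induction on $i$ that each $U_i$ is $\FF_p(\!(t)\!)$-free of rank $i$ by noting that $\mathrm{Ext}^1_{\mathrm{cts}}$ of the trivial-by-shift type vanishes here because over a field every short exact sequence of vector spaces splits and the splitting can be chosen continuous (a continuous $\FF_p(\!(t)\!)$-linear section exists by picking continuous coordinate functionals, which exist since the quotient is finite-dimensional). Either way, the conclusion is forced; I would present the completed-module version as the main line since it also transparently yields the endomorphism-ring statement.
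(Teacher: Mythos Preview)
Your main line---extend the $\FF_p[\alpha^{\pm 1}]$-action to a continuous $\FF_p(\!(t)\!)$-action using uniform contraction, pick an $\alpha$-stable compact open $W$, and lift an $\FF_p$-basis of $W/\alpha W$ to an $\FF_p[\![t]\!]$-basis of $W$ via completeness/Nakayama---is exactly the paper's argument, only phrased in more module-theoretic language; the paper writes out the same steps by hand (convergence of the series $\sum a_i\alpha^i(u)$, continuity of scalar multiplication, then the basis-lifting from $V_0/\alpha V_0$). One slip to fix: you want $W$ to be $\alpha$-stable (i.e.\ $\alpha(W)\subseteq W$), not $\alpha^{-1}$-stable, and the construction should read $\sum_{n\geq 0}\alpha^{n}(W_0)$ rather than $\alpha^{-n}$; with $\alpha^{-n}$ the sum is expanding and not compact, and your later chain $W\supseteq tW\supseteq\cdots$ would go the wrong way. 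The paper simply cites \cite[Proposition~1.1(b)]{GloecknerWillis} for the existence of such a $W$. Your alternative route via the Gl\"ockner--Willis composition series and splitting of extensions is not needed and is not what the paper does.
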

 
\begin{proof}
Notice that by hypothesis $U$ is canonically endowed with the structure of a vector space over $\FF_p$. 

We claim that, for each $n \in \ZZ$, each sequence  $(a_i)_{i \geq n}$  in $\FF_p$ and all $u \in U$, the series  $\sum_{i=n}^\infty a_i \alpha^i(u)$ converges in $U$.

\medskip
In order to establish the claim, we let $V_0 \leq U$ be a compact open subgroup such that $\alpha(V_0) \leq V_0$. Such a compact open subgroup always exists, see e.g.\@ \cite[Proposition~1.1(b)]{GloecknerWillis}. For all $n \in \ZZ$, set $V_n = \alpha^n(V_0)$.  By Lemma~\ref{lem:UnifContraction}, the automorphism $\alpha$ uniformly contracts every compact subset of $U$ to the identity. Therefore we have $\bigcap_{n\geq 0} V_n = 1$. In particular $(V_n)_{n \geq 0}$ is a base of identity neighbourhoods. 

For $u, v \in U$, define $d(u,v) = 2^{-n}$ with $n = \sup\{k \mid v-u \in V_k\}$. Then $d$ is a $U$-invariant ultra-metric on $U$.   Since $(V_n)_{n \geq 0}$ is a base of identity neighbourhoods, the group topology on $U$ coincides with the topology defined by $d$. 

Now it is immediate to verify that the sequence $(\sum_{i=n}^m a_i \alpha^i(u))_{m \geq n}$ is Cauchy. Since a metrisable locally compact group is complete, the series $\sum_{i=n}^\infty a_i \alpha^i(u)$ converges as desired. The claim stands proven. 

\medskip
We next claim that  the map
\[
s \colon \FF_p(\!( t ) \!) \times U  \to U \colon \Bigl( \sum_{i=n}^\infty a_i t^i, u \Bigr) \mapsto \sum_{i=n}^\infty a_i \alpha^i(u)
\]
is continuous and turns $U$ into a topological vector space of dimension $d < \infty$ over the local field $\FF_p(\!( t ) \!)$. 

Indeed, for each $u \in U$ and $n \in \ZZ_{\geq 0}$, the inverse image under the scalar multiplication $s$ of $u + V_n$ contains $1+t^m \FF_p[\![ t ] \!] \times (u+V_n)$, where
$m = \max \{ 0, n + \log_2 d(0,u) \}$.
This proves that $s$ is continuous. That $s$ satisfies the axioms of a scalar multiplication is easy to verify.

We have $[V_0:V_1] = p^d$ for some $d$.
Let $e_1, \dots, e_d \in V_0$ be such that $\la e_1, \dots, e_d \ra V_1 = V_0$. Then $\la \alpha^n(e_i) \mid n \geq 0; \;  i=1, \dots, d\ra$ is dense in $V_0$ because it maps surjectively on each quotient $V_0/V_{n}$ for all $n \geq 0$. Moreover, for each $j$, the group $\la \alpha^n(e_i) \mid n \geq 0; \;  i \neq j\ra$ does not surject onto $V_0/V_1$. Together, these two observations imply that
\[
V_0 = \FF_p[\![ t ] \!] e_1 \oplus \dots \oplus \FF_p[\![ t ] \!] e_d.
\]
Since $U = \lim_{n \to \infty} \alpha^{-n} (V_0)$, this implies that
$\{e_1, \dots, e_d\}$ is a basis for $U$ over $ \FF_p(\!( t )\!)$, and $U$ is finite-dimensional as desired.
The claim is proven and the lemma follows. 
\end{proof}

\section{Structure of the vertex stabilizers}


By definition, the root groups of a group satisfying the boundary-Moufang condition are closed. Moreover, we have seen in Lemma~\ref{lem:RootGroups:contraction} that each root group is contracted by the conjugation action of some hyperbolic element. It follows that the structure of these can be analysed in the light of general results on contraction groups, some of which were recalled above. 

Our next goal is to relate the structure of the root groups to the local structure of a boundary-Moufang group $G \leq \Aut(T)$, i.e.\@ to the structure of \emph{open} subgroups of $G$.

In this section, we fix a pair  of distinct points $0, \infty \in \partial T$, we  let $\ell$ be the line of $T$ joining $0$ to $\infty$ and set $H= G_{0, \infty}$. Our goal is to prove the following. 

\begin{prop}\label{prop:pro-p}
If there is a prime $p$ such that for some vertex $v$ on $\ell$, the groups $U_{0, v}$ and $H_v$ are virtually pro-$p$ (resp.\@ $p$-adic analytic), then $G_v$ is virtually pro-$p$ ($p$-adic analytic) as well. 
\end{prop}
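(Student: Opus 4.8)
The strategy is to realize $G_v$ as built from the vertex stabilizer data along $\ell$ using the Moufang structure, and then invoke the closure properties of the classes "virtually pro-$p$" and "$p$-adic analytic" (which are closed under extensions, under passing to open subgroups and to closed subgroups, and under finite-index overgroups). First I would recall from Lemma~\ref{lem:Hua} that $H = G_{0,\infty}$ decomposes as $H = H_c \rtimes \la t \ra$ with $t$ a hyperbolic element translating along $\ell$, say by $2$ (after replacing vertices of valency $2$, we may arrange the $G$-action to be edge-transitive, so $t$ can be taken of translation length dividing $2$; in any case $\la t \ra$ acts cocompactly on $\ell$). Conjugating by powers of $t$ moves $v$ to any vertex of $\ell$ in its $\la t\ra$-orbit, so it suffices to prove the statement for one vertex on $\ell$, and the hypothesis on $U_{0,v}$ and $H_v$ is then available at every $\la t\ra$-translate of $v$.

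Next I would analyze $G_v$ by restricting attention to the star of $v$ and to the half-tree structure at $v$. The key point is the Moufang-set description of $\bd T$: writing $h^-$ for the half-tree at $v$ containing the ray to $0$ and $h^+$ for its complement (containing the ray to $\infty$ when $\infty$ lies on the opposite side — more carefully, one splits the edges at $v$ according to which of finitely many half-trees they determine), the pointwise stabilizer $F_v = \Fix_{G_v}(B(v,1))$ is an open subgroup of $G_v$ of finite index (since $G_v$ is compact, acting on the finite ball $B(v,1)$). So it is enough to show $F_v$ is virtually pro-$p$ (resp.\ $p$-adic analytic). Now $F_v$ fixes $v$ and hence stabilizes each half-tree at $v$; in particular it fixes the two ends at the extremities of the two directions of $\ell$ through $v$, so actually — after intersecting with the open finite-index subgroup fixing the neighbours $v_\pm$ of $v$ on $\ell$ — it fixes the two ends $0',\infty'$ of $\ell$ determined locally, i.e.\ this subgroup lies in a conjugate of $G_{0',\infty'}$; combined with fixing $B(v,1)$ this lands it inside $H_v$ up to conjugacy (or inside $G_{0,v}\cdot$(small piece), using that $H = H_c \rtimes \la t\ra$ and the translation part moves $v$). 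The hypothesis then gives that this subgroup is virtually pro-$p$ (resp.\ $p$-adic analytic), and pulling back up the finite-index inclusions $F_v' \le F_v \le G_v$ finishes the proof.

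The main obstacle I anticipate is the bookkeeping that ties $F_v$ (or a finite-index open subgroup thereof) to the groups $U_{0,v}$ and $H_v$ appearing in the hypothesis: one must show that fixing a large enough neighbourhood of $v$ forces an element of $G_v$ to lie — up to conjugacy by something in $G^+$ and up to finite index — in the subgroup generated by $H_v$ and (a compact open piece of) a root group $U_{0,v}$. Here I would use Theorem~\ref{thm:BaumWillis}(iii) / Proposition~\ref{prop:TransitiveContraction}, which give $G_0 = G_{0,\infty}\cdot U_0 = H \cdot U_0$ and hence $G_{0,v} = H_v \cdot U_{0,v}$ (intersecting with $G_v$), expressing $G_{0,v}$ as an extension of $H_v$ by the closed subgroup $U_{0,v}$ — both virtually pro-$p$ (resp.\ $p$-adic analytic) by hypothesis, so $G_{0,v}$ is too. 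Finally one observes $G_{0,v}$ is open in $G_v$: it is the stabilizer in the compact group $G_v$ of the direction at $v$ pointing to $0$, an orbit that is finite since $G_v$ acts on the finite set of edges at $v$. Thus $[G_v : G_{0,v}] < \infty$, and $G_v$ inherits the property from its finite-index open subgroup $G_{0,v}$. The residual care is to make sure "virtually pro-$p$" and "$p$-adic analytic" really do pass through all of: closed subgroups, extensions with both pieces in the class, and finite-index overgroups — for $p$-adic analyticity this uses the Lazard-type characterization (a t.d.l.c.\ group is $p$-adic analytic iff it has an open pro-$p$ subgroup which is $p$-adic analytic, equivalently of finite rank), which makes all three stability properties standard.
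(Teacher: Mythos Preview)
Your argument contains a genuine gap at the final step. You claim that $G_{0,v}$ is open of finite index in $G_v$ because it ``is the stabiliser in the compact group $G_v$ of the direction at $v$ pointing to $0$''. This identification is false: the stabiliser of the edge at $v$ towards $0$ only fixes the first step of the geodesic ray $[v,0)$, whereas $G_{0,v}$ fixes the end $0$ itself, i.e.\ the entire ray. The former has finite index in $G_v$, but $G_{0,v}$ is in general neither open nor of finite index. Concretely, for $G=\PSL_2(\QQ_p)$ acting on its Bruhat--Tits tree, $G_v\cong\PSL_2(\ZZ_p)$ acts transitively on $\bd T=\PP^1(\QQ_p)$, so $[G_v:G_{0,v}]$ is uncountable; the group $G_{0,v}$ is the $2$-dimensional upper-triangular subgroup inside the $3$-dimensional compact group $G_v$, hence not open. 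The same confusion underlies your earlier attempt with $F_v$: fixing $B(v,1)$, or even the neighbours $v_\pm$ of $v$ on $\ell$, does not force an element to fix the ends $0$ and $\infty$, so such elements need not lie in any conjugate of $H$.

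Your decomposition $G_{0,v}=U_{0,v}\rtimes H_v$ is correct and does show that $G_{0,v}$ is virtually pro-$p$ (resp.\ $p$-adic analytic), but this closed subgroup is too small to control $G_v$. What is missing is the contribution of the \emph{other} root group $U_\infty$. The paper's proof brings it in via the $\mu$-maps (Lemma~\ref{lem:Mu}): the swap $\tau$ fixing $v$ conjugates $U_{0,v}^{(n)}$ to $U_{\infty,v}^{(n)}$, so the latter is pro-$p$ as well. One then needs a product decomposition of an \emph{open} subgroup of $G_v$ involving both root groups; this is Proposition~\ref{prop:VertexStab(n)}, which gives a bijective product $G_{x_0,x_\infty}^{(n)}=U_{\infty,x_0}\cdot U_{0,x_\infty}\cdot H_v^{(n)}$ for suitable $x_0,x_\infty\in\ell$ (depending on the function $\psi$ from Lemma~\ref{lem:psi(n)}). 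From this bijective triple product one reads off that all relevant indices are $p$-powers, and in the analytic case one concludes via \cite[Theorems~3.17 and~9.36]{DDMS}.
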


In order to establish this, we need to study how the vertex-stabiliser $G_v$ is generated by its intersection with the root groups.



\begin{prop}\label{prop:VertexStab}
For each vertex $v$ on $\ell$, we have a decomposition
\[ G_v =  U_{\infty, v} U_{0, v}  U_{\infty, v} U_{0,v} H_v = U_{0, v} U_{\infty, v} U_{0, v} U_{\infty,v} H_v . \]
\end{prop}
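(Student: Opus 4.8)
The plan is to prove the decomposition $G_v = U_{\infty,v} U_{0,v} U_{\infty,v} U_{0,v} H_v$ by exploiting the Moufang structure on $\partial T$ together with the Bruhat-type decomposition of the little projective group. First I would recall from Lemma~\ref{lem:Mu} that for the given vertex $v \in \ell$ there is a $\mu$-map $\tau = \mu_u \in G^+$ that fixes $v$ and swaps $0$ and $\infty$; since $\tau$ fixes $v$ it lies in $G_v$, and this single element already carries most of the ``Weyl group'' information we need. I would also record that $H_v = (G_{0,\infty})_v$ lies in $G_v$, that $U_{0,v}$ and $U_{\infty,v}$ are contained in $G_v$ by definition (a nontrivial element of $U_0$ fixes a ray to $0$, and $U_{0,v}$ consists of those fixing $v$ as well), and — crucially — that these are all \emph{open} subgroups of $G_v$, so the decomposition we obtain is a decomposition of a profinite group into finitely many double-coset-like pieces.

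\medskip

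The heart of the argument is the following. Let $g \in G_v$. Since $g$ fixes $v$, it permutes the ends of $T$, and in particular we may ask where $g$ sends $\infty$ and $0$. I would split into cases according to whether $g.\infty$ and $g.0$ equal $\infty$ or $0$ or neither, mirroring the standard proof that $\PGL_2$ has a Bruhat decomposition with a two-element Weyl group.

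\medskip

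Case 1: $g.\infty = \infty$ and $g.0 = 0$. Then $g \in G_{0,\infty} \cap G_v = H_v$ and we are done.

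Case 2: $g.\infty \neq \infty$. Using that $U_\infty$ acts sharply transitively on $\partial T \setminus \{\infty\}$, pick $u_1 \in U_\infty$ with $u_1.(g.\infty) = 0$; since $g.\infty \neq \infty$ and $0 \neq \infty$, this $u_1$ is well defined, and (using Lemma~\ref{lem:psi(n)} and the geometric description of $U_{\infty,v}$, together with the fact that $g$ and $u_1$ both fix $v$, one checks that) we may arrange $u_1 \in U_{\infty,v}$ — here one uses that $g.\infty$ lies in the same ``sector at $v$'' so that the correcting element of $U_\infty$ actually fixes $v$. Now $u_1 g$ fixes $v$ and sends $\infty$ to $0$. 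Applying the $\mu$-map $\tau \in G_v$ which swaps $0,\infty$, the element $\tau u_1 g$ fixes $v$ and fixes $\infty$. Then $\tau u_1 g \in G_{v,\infty} = U_{\infty,v} H_v$ by the Iwasawa-type decomposition of the end-stabiliser ($G_\infty = U_\infty \rtimes H$ intersected with $G_v$, using Proposition~\ref{thm:BaumWillis}(iii) / Lemma~\ref{lem:parab} which gives $P_{\alpha^{-1}} = G_\infty = H.U_\infty$). Writing $\tau u_1 g = u_2 h$ with $u_2 \in U_{\infty,v}$, $h \in H_v$, we get $g = u_1^{-1} \tau^{-1} u_2 h$; rewriting $\tau^{-1}$ via $\tau = \mu_{u} = u' u u''$ with $u', u'' \in U_\infty^*$ (Proposition~\ref{prop:mu}) and all three factors fixing $v$ (Lemma~\ref{lem:mu-map}), one absorbs everything into a product of alternating $U_{0,v}$, $U_{\infty,v}$ factors followed by $H_v$, and bounds the number of factors to get into the claimed form. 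The key identity here is the ``commutation''/rewriting coming from the $\mu$-map: $\tau$ conjugates $U_0$ to $U_\infty$ and vice versa, so any word in $\tau^{\pm 1}$, $U_{0,v}$, $U_{\infty,v}$ collapses to length $4$ (plus an $H_v$ tail), exactly as in $\SL_2$.

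Case 3: $g.\infty = \infty$ but $g.0 \neq 0$. Then $g \in G_{v,\infty} = U_{\infty,v} H_v \subseteq U_{\infty,v} U_{0,v} U_{\infty,v} U_{0,v} H_v$ trivially, or one runs the analogous correction at $0$.

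\medskip

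Finally, the two displayed decompositions (the one starting with $U_{\infty,v}$ and the one starting with $U_{0,v}$) are interchanged by conjugating by $\tau$, which swaps the roles of $0$ and $\infty$ and fixes $v$, hence conjugates $U_{\infty,v} \leftrightarrow U_{0,v}$ and $H_v \to H_v$; so it suffices to prove one of them.

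\medskip

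The main obstacle I anticipate is not the group-theoretic bookkeeping but the \emph{geometric} step in Case~2 of arranging that the correcting elements of $U_\infty$ (resp.\ $U_0$) actually fix $v$, rather than merely some vertex farther out along $\ell$. This is where one must use Lemma~\ref{lem:psi(n)} and the precise ``sector'' description of $U_{\infty,v}$: an element of $U_\infty$ fixes $v$ iff its action on $\partial T \setminus \{\infty\}$ fixes (setwise) the shadow of $v$, and one must check that $g.\infty$, or the relevant image point, lands in the correct shadow because $g$ fixes $v$. A secondary subtlety is controlling the word length — naively iterating gives a long alternating word, and one needs the $\mu$-map relations (and the fact that $U_{0,v} U_{\infty,v} U_{0,v}$ already contains a full $\mu$-map fixing $v$, by Lemma~\ref{lem:Mu} and Lemma~\ref{lem:mu-map}) to collapse it to exactly four factors before the $H_v$ tail. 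Once the geometry is pinned down, the algebra is the classical rank-one Bruhat decomposition computation.
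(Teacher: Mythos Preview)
Your overall architecture is right and matches the paper's: use the $\mu$-map $\tau \in G_v$ swapping $0,\infty$, reduce an arbitrary $g \in G_v$ to $G_{0,v}$ (or $G_{\infty,v}$), then use $G_{0,v} = U_{0,v} H_v$ and unpack $\tau$ as a length-three word in $U_{0,v}, U_{\infty,v}$. But there are two genuine gaps.

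\textbf{The main gap is in Case~2.} You take $u_1 \in U_\infty$ with $u_1.(g.\infty) = 0$ and assert that $u_1 \in U_{\infty,v}$. Since $U_\infty$ is sharply transitive on $\partial T \setminus \{\infty\}$, this $u_1$ is unique, and it fixes $v$ if and only if $v$ lies on the geodesic $(g.\infty, \infty)$. But $g$ fixing $v$ does \emph{not} force this: $g$ may well send the edge at $v$ pointing to $\infty$ to itself while still moving $\infty$, so that the ray $[v, g.\infty)$ begins in the same direction as $[v, \infty)$, and then $(g.\infty, \infty)$ misses $v$ entirely. Your proposed justification (``$g.\infty$ lies in the same sector at $v$ because $g$ fixes $v$'') is simply false, and the appeal to $g$ and $u_1$ both fixing $v$ is circular. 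Lemma~\ref{lem:psi(n)} does not help here either: it controls how far fixed sets extend \emph{towards} $\infty$, not which sector an image point lands in. The paper's remedy is precisely the case split you are missing: working with $x = g.0$, either $v \in (x, \infty)$ and one corrects directly with some $u \in U_{\infty,v}$, or else $v \in (x, 0)$, in which case one first applies $\tau$ (so $v \in (\tau.x, \infty)$) and then corrects. Either way one lands in $U_{\infty,v} G_{0,v} \cup \tau^{-1} U_{\infty,v} G_{0,v}$, and expanding $\tau = u' u u'' \in U_{\infty,v} U_{0,v} U_{\infty,v}$ gives the four-factor word.

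\textbf{A secondary gap} is the identity $G_{\infty,v} = U_{\infty,v} H_v$ (equivalently $G_{0,v} = U_{0,v} H_v$). From $G_\infty = U_\infty H$ one only gets $g = u h$ with $u \in U_\infty$, $h \in H$; one must still check that $h$ (hence $u$) fixes $v$. The paper does this via the Busemann character centred at $0$: both $g \in G_{0,v}$ and $u \in U_0$ are elliptic, hence lie in the kernel, so $h = u^{-1} g$ does too; by Lemma~\ref{lem:Hua} this forces $h \in H_c$, whence $h$ fixes all of $\ell$ and in particular $v$. Your citation of $P_\alpha = H \cdot U_\infty$ does not by itself give this.
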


\begin{proof}
Let $\tau \in G$ be an element fixing $v$ and swapping $0$ and $\infty$ (see Lemma~\ref{lem:Mu}). Pick any $g \in G_v$ and consider the element $x = g.0$. If the vertex $v$ lies on the geodesic line $(x, \infty)$, then there is an element $u \in U_{\infty, v}$ such that 
$u.x = 0$, hence $ug.0 = 0$. If this is not the case, then  $v$ lies on the geodesic line $(x, 0)$ and, hence, $v$ lies also on the geodesic line $(\tau.x, \infty) = \tau.(x, 0)$. There is then an element $u' \in U_{\infty, v}$ such that $u' \tau . x = 0$, hence $u' \tau g.0 = 0$. 

In either case, we have shown that $g \in U_{\infty, v} G_{0, v} \cup \tau\inv U_{\infty, v} G_{0, v}$.
Since $\tau$ can be written as a product $u' u u''$ with $u \in U_{0, v}$ and $u', u'' \in U_{\infty, v}$ (see the proof of Lemma~\ref{lem:Mu}), we deduce that 
\[ G_v =  U_{\infty, v} U_{0, v}  U_{\infty, v} G_{0, v} . \]
Since $U_0$ is transitive on $\bd T \setminus \{0\}$, we have $G_0 = U_0 H$. Notice that $G_{0, v}$ consists of elliptic elements, and so does $U_0$ by Lemma~\ref{lem:RootGroupElliptic}. Thus all the elements of $G_{0, v} \cup U_0$ are contained in the kernel of the Busemann character centred at $0$. In view of Lemma~\ref{lem:Hua}, this implies that if $g = uh \in U_0 H$ fixes $v$, then $h$ acts trivially on $\ell$. In particular $h$ fixes $v$, and hence $u$ does so as well. This shows that 
\[ G_{0, v} = U_{0,v} H_v, \]
which, combined with the above, yields
\[ G_v =  U_{\infty, v} U_{0, v}  U_{\infty, v} U_{0,v} H_v . \]
A similar argument with the roles of $0$ and $\infty$ interchanged yields 
\[ G_v =  U_{0,v} U_{\infty, v} U_{0, v}  U_{\infty, v} H_v . \]
as requested.
\end{proof}

It is in fact useful to dispose of  similar decompositions for some open subgroups of $G_v$. The following result shows that some of these open subgroups are often subjected to a simpler decomposition than the one provided by Proposition~\ref{prop:VertexStab}. In order to state it,  it is convenient to define  the \textbf{$n^{\text{th}}$ congruence subgroup} of the stabiliser $G_v$ as the pointwise stabiliser $G_v^{(n)}$ of the $n$-ball around $v$. Moreover, for any subgroup $J \leq G_v$, we set $J^{(n)} = J \cap G_v^{(n)}$.

\begin{prop}\label{prop:VertexStab(n)}
Let $v, x_0, x_\infty$ be vertices of $\ell$ such that $x_0 \in (0, v]$, $x_\infty \in [v, \infty)$ and $x_0 \neq x_\infty$. 

For all $n\geq 0$, if $d(x_0, v) \geq \psi(n)$ and $d(x_\infty, v) \geq \psi(n)$ (where $\psi(n)$ is the map from Lemma~\textup{\ref{lem:psi(n)}}), then the product maps
\[
  U_{\infty, x_0}  \times U_{0, x_\infty}  \times H_v^{(n)} \to G_{x_0, x_\infty}^{(n)}
\]
 and 
\[
  U_{0,x_\infty} \times U_{\infty, x_0} \times H_v^{(n)} \to G_{x_0, x_\infty}^{(n)}
\]
 are bijective. 
\end{prop}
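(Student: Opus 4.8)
The strategy is to upgrade Proposition~\ref{prop:VertexStab} from the vertex stabiliser $G_v$ to the congruence subgroup $G_{x_0, x_\infty}^{(n)}$, using the geometry encoded in the map $\psi$ of Lemma~\ref{lem:psi(n)} to collapse the five-factor product into a three-factor one. First I would fix $g \in G_{x_0, x_\infty}^{(n)}$ and run the same argument as in the proof of Proposition~\ref{prop:VertexStab}, applied to the point $x = g.0$. Since $g$ fixes both $x_0 \in (0,v]$ and $x_\infty \in [v,\infty)$, it fixes $\ell$ pointwise, so $x$ and $0$ lie in the same half-tree determined by $\ell$ at $v$; more precisely, the geodesic $(x, \infty)$ passes through $v$ (because $g$ fixes $x_\infty$ and hence the whole ray $[v,\infty)$). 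Thus there is $u \in U_{\infty, v}$ with $u.x = 0$, i.e.\ $ug.0 = 0$ — and crucially the case distinction with $\tau$ disappears, because $g$ already stabilises $\ell$. So $ug \in G_{0}$, and in fact $ug \in G_{0, x_\infty}^{(n)}$ once we check $u$ can be chosen in the smaller group $U_{\infty, x_0}$: this is exactly where $d(x_\infty, v) \geq \psi(n)$ enters, via Lemma~\ref{lem:psi(n)}, which says that an element of $U_{\infty, v}$ fixing the right portion of $\ell$ automatically fixes the $n$-ball around $v$, so conversely we can realise the needed boundary map inside $U_{\infty, x_0}$ rather than just $U_{\infty, v}$.

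Next I would handle the $G_0$-part. As in Proposition~\ref{prop:VertexStab}, $G_{0,v} = U_{0,v} H_v$, because elements of $U_0 \cup G_{0,v}$ lie in the kernel of the Busemann character at $0$, forcing the Hua component to act trivially on $\ell$ (Lemma~\ref{lem:Hua}). Intersecting with the congruence conditions, one gets $G_{0, x_\infty}^{(n)} = U_{0, x_\infty} H_v^{(n)}$ — here the condition $d(x_0, v) \geq \psi(n)$ is used symmetrically to push the $U_0$-component down from $U_{0,v}$ to $U_{0, x_\infty}$ while preserving the congruence level, again via Lemma~\ref{lem:psi(n)}. Combining, $g \in U_{\infty, x_0} U_{0, x_\infty} H_v^{(n)}$, which gives surjectivity of the first product map; the second follows by interchanging the roles of $0$ and $\infty$ (and of $x_0$ and $x_\infty$), noting the hypotheses are symmetric in that exchange.

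For injectivity, suppose $u_\infty u_0 h = u_\infty' u_0' h'$ with $u_\infty, u_\infty' \in U_{\infty, x_0}$, $u_0, u_0' \in U_{0, x_\infty}$, $h, h' \in H_v^{(n)}$. Then $(u_\infty')^{-1} u_\infty = u_0' h' h^{-1} u_0^{-1}$. The left side fixes $\infty$ and the ray $[x_0, \infty)$; the right side fixes $0$. An element fixing both $0$ and $\infty$ lies in $H = G_{0,\infty}$, and by Lemma~\ref{lem:Hua} it is a product of a part $H_c$ acting trivially on $\ell$ and a translation. But $(u_\infty')^{-1} u_\infty$ is elliptic (a product of two root-group elements fixing $\infty$, hence in $U_\infty$, which is locally elliptic by Lemma~\ref{lem:RootGroupElliptic}); an elliptic element of $H$ fixing $0$ and $\infty$ acts trivially on $\ell$. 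Now the sharp transitivity of the root groups on $\bd T \setminus \{\infty\}$ forces $(u_\infty')^{-1} u_\infty$ to be determined by its action, and since it fixes $0$ as well (being equal to $u_0' h' h^{-1} u_0^{-1}$, which fixes $0$), sharp transitivity of $U_\infty$ on $\bd T \setminus \{\infty\}$ gives $u_\infty = u_\infty'$. Then $u_0 h = u_0' h'$, and since $H_v^{(n)} \leq G_0$ with $U_{0, x_\infty} \cap H = 1$ (sharp transitivity of $U_0$ on $\bd T \setminus \{0\}$ again, as $h' h^{-1}$ fixes $\infty$), we conclude $u_0 = u_0'$ and $h = h'$.

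\textbf{Main obstacle.} The delicate point is the bookkeeping around Lemma~\ref{lem:psi(n)}: one must verify carefully that the element $u \in U_{\infty, v}$ produced by the transitivity argument can be taken in the \emph{smaller} group $U_{\infty, x_0}$ \emph{and} that the product $ug$ lands in the congruence subgroup $G_{0,x_\infty}^{(n)}$ at the correct level $n$, which is precisely what the inequalities $d(x_0,v) \geq \psi(n)$ and $d(x_\infty,v) \geq \psi(n)$ are calibrated to guarantee. Getting the quantifiers in Lemma~\ref{lem:psi(n)} to line up — it constrains which balls a subgroup of a root group fixes, and we need the converse direction, that fixing enough of $\ell$ already places us inside the right congruence subgroup — is the step that requires the most care; the rest is a routine adaptation of Proposition~\ref{prop:VertexStab}.
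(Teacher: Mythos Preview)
Your overall strategy is the same as the paper's: given $g \in G_{x_0,x_\infty}^{(n)}$, peel off a root-group element to send one boundary point back, then another for the other boundary point, and the remainder lies in $H_v^{(n)}$; uniqueness comes from sharp transitivity of the root groups. The paper peels off $U_0$ first and then $U_\infty$, you do it in the opposite order, but that is immaterial.

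However, your geometric reasoning contains a real error. You write that since $g$ fixes $x_0$ and $x_\infty$, ``it fixes $\ell$ pointwise'', and later that $g$ ``fixes the whole ray $[v,\infty)$''. Both are false: $g$ only fixes the finite segment $[x_0, x_\infty]$. (If $g$ fixed $\ell$ pointwise then $g \in H$ and there would be nothing to do.) The correct observation --- and this is what the paper actually uses --- is that because $g$ fixes the edge at $x_0$ pointing towards $x_\infty$, the ray $[x_0, g.0)$ must begin with some \emph{other} edge at $x_0$; hence $g.0$ lies in the component of $T \setminus \{x_0\}$ not containing $\infty$, so the geodesic $(g.0,\infty)$ passes through $x_0$. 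Consequently the unique $u \in U_\infty$ with $u.(g.0)=0$ fixes $x_0$, i.e.\ $u \in U_{\infty,x_0}$. The analogous argument at $x_\infty$ handles the $U_0$-factor. This step is pure tree geometry and has nothing to do with Lemma~\ref{lem:psi(n)}.

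You have also misidentified what Lemma~\ref{lem:psi(n)} does. It is not used to place $u$ inside $U_{\infty,x_0}$; that comes from the branching argument above. The lemma is used only in the forward direction, exactly as stated, to check that $U_{\infty,x_0}$ and $U_{0,x_\infty}$ already fix the $n$-ball around $v$ --- so that the product lands in $G_{x_0,x_\infty}^{(n)}$ and the remainder $h$ lies in $H_v^{(n)}$. For this one needs $d(x_0,v)\geq\psi(n)$ for the $U_\infty$-factor and $d(x_\infty,v)\geq\psi(n)$ for the $U_0$-factor (you have these two swapped). No ``converse direction'' of the lemma is required, so the obstacle you flag is not the actual delicate point; the branching argument you glossed over is.
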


\begin{proof}
We let $\psi \colon \NN \to \NN$ be as in Lemma~\ref{lem:psi(n)}. The latter lemma implies that $U_{\infty, x_0} = U_{\infty, x_0}^{(n)}$ and  $ U_{0, x_\infty}  =  U_{0, x_\infty}^{(n)}$. 

Let now $g \in G_{x_0, x_\infty}^{(n)}$. Since $g$ fixes pointwise a segment $[x_0, x_\infty] \subset \ell$ of positive length, it follows that $g.0 \neq \infty $ and $g.\infty \neq 0$. 
We deduce that there is a unique $u \in U_{0}$ such that $ug.\infty = \infty$, and clearly $u$ fixes $x_\infty$, hence $u \in U_{0, x_\infty}$.
Similarly,  there is a unique $w \in U_{\infty, x_0}$ such that $wug.0 = 0$. Moreover we have $wug.\infty = w.\infty = \infty$. Thus $h = wug \in H_v$. Moreover $h$ fixes pointwise the $n$-ball around $v$ since $u$, $w$ and $g$ all do. 
Hence  $g = u\inv w\inv h \in U_{0, x_\infty}  U_{\infty, x_0} H_v^{(n)}$. Moreover this decomposition is unique in view of the uniqueness of $u$ and $w$ in the construction above. 

A similar argument works with $0$ and $\infty$ interchanged.
\end{proof}

\begin{proof}[Proof of Proposition~\ref{prop:pro-p}]
\medskip
Let $n>0$ be sufficiently large so that  $U_{0, v}^{(n)}$ and $H_v^{(n)}$ are pro-$p$ groups. By Lemma~\ref{lem:Mu}, there is an element  $\tau \in G^+$ which fixes $v$ and swaps $0$ and $\infty$. Then $(U_{0, v}^{(n)})^\tau = U_{\infty, v}^{(n)}$, which proves that the latter is also pro-$p$. 

Let $\psi \colon \NN \to \NN$ be the map provided by Lemma~\ref{lem:UnifContraction}. Let also  $x_0 \in (0, v]$ and $x_\infty \in [v, \infty)$ be such that $d(v, x_0) = d(v, x_\infty) = \psi(n)$. We shall show that $G_{x_0, x_\infty}^{(n)}$ is pro-$p$. Since $G_{x_0, x_\infty}^{(n)}$ is  an open subgroup of $G_v$, it will follow that $G_v$ is virtually pro-$p$. 

So let $m \geq n$. We have to show that the index of  $G_{x_0, x_\infty}^{(m)}$ in  $G_{x_0, x_\infty}^{(n)}$ is a power of $p$. Let $y_0 \in (0, v]$ and $y_\infty \in [v, \infty)$ be such that $d(v, y_0) = d(v, y_\infty) = \psi(m)$. Thus we have  $G_{y_0, y_\infty}^{(m)} \leq  G_{x_0, x_\infty}^{(m)}$  and it suffices to show that  $[ G_{x_0, x_\infty}^{(n)} : G_{y_0, y_\infty}^{(m)}]$ is a power of $p$. 

By Proposition~\ref{prop:VertexStab(n)}, 
the product maps
\[  U_{\infty, x_0}   \times U_{0, x_\infty}   \times H_v^{(n)} \to G_{x_0, x_\infty}^{(n)} \]
and 
\[
 U_{\infty, y_0}  \times U_{0, y_\infty} \times H_v^{(m)} \to G_{y_0, y_\infty}^{(m)}
\]
are both bijective. Therefore we have
\[
[ G_{x_0, x_\infty}^{(n)} : G_{y_0, y_\infty}^{(m)}] = 
[U_{\infty, x_0}^{(n)}: U_{\infty, y_0}^{(m)}  ]\; 
[U_{0, x_\infty}^{(n)}: U_{0, y_\infty}^{(m)}] \;
[H_v^{(n)} : H_v^{(m)} ],
\]
which is indeed a power of $p$.

If one now assumes additionally that $U_{0, v}^{(n)}$ and $H_v^{(n)}$ are in fact $p$-adic analytic, then so is $(U_{0, v}^{(n)})^\tau = U_{\infty, v}^{(n)}$. Moreover, it follows from \cite[Theorems~3.17 and~9.36]{DDMS} that a pro-$p$ group which is a finite (not necessarily direct) product of $p$-adic analytic groups  it is itself $p$-adic analytic. Since $G_{x_0, x_\infty}^{(n)}$ is pro-$p$ by the first part of the proof,  the desired conclusion follows again from the decomposition 
$G_{x_0, x_\infty}^{(n)} =   U_{\infty, x_0}^{(n)}  U_{0, x_\infty}^{(n)}  H_v^{(n)} $ which was established above.
\end{proof}

\section{Structure of the Hua subgroup}

The product decompositions established in Proposition~\ref{prop:VertexStab(n)} show that the local structure of a boundary-Moufang group is determined in an essential way by the structure of the root groups, and of the Hua subgroup. While meaningful information on the root groups can be deduced from the fact that they are contraction groups, the structure of the Hua subgroup seems to be much less tractable. We have seen in Lemma~\ref{lem:Hua} that the Hua subgroup $H$ has a maximal compact normal subgroup $H_c$ with $H/H_c$ infinite cyclic. It turns out to be an important question to identify interesting subgroups of the compact part $H_c$. The relevance of this question relies in the fact that the centraliser of such a subgroup happens to be the main source of sub-Moufang sets (see Section~\ref{sec:subMoufang} below), whose existence is useful in analysing the global structure of the given Moufang set. This will be illustrated below in our analysis of boundary-Moufang groups with torsion-free root groups. Of course, this whole approach can only work if $H_c$ indeed contains significant subgroups. In this direction, we shall now discuss the following conjecture, which predicts that $H_c$ must be infinite. 

\begin{conj}\label{conj:VirtCyclic:1}
Let $G \leq \Aut(T)$ be  closed, non-compact and boundary-transitive. Then the stabiliser $G_{\xi, \xi'}$ of a pair of distinct boundary points $\xi, \xi' \in \bd T$ is not discrete.
\end{conj}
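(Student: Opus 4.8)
The plan is to prove the contrapositive: if $G_{\xi,\xi'}$ is discrete, then $G$ itself must be discrete, contradicting non-compactness combined with boundary-transitivity (which forces cocompactness, and a discrete cocompact subgroup of a tree automorphism group that is boundary-transitive would have to be virtually free and certainly not transitive on the uncountable set $\bd T$ unless $T$ is a point). So first I would fix distinct points $\xi, \xi' \in \bd T$, let $\ell$ be the geodesic line joining them, and set $H = G_{\xi,\xi'}$. By Lemma~\ref{lem:Hua} we have $H = H_c \rtimes \langle t\rangle$ with $H_c$ compact and $t$ a hyperbolic translation along $\ell$. If $H$ is discrete, then $H_c$ is a discrete compact group, hence finite.

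Next I would exploit the geometry of the tree at a vertex $v$ on $\ell$. Since $G$ is boundary-transitive, by Proposition~\ref{prop:2trans} it is $2$-transitive on $\bd T$, and by Proposition~\ref{prop:VertexStab}-style reasoning the stabiliser $G_v$ is generated by $H_v = H \cap G_v$ together with elements swapping $\xi$ and $\xi'$. The key tension is between the compact-open topology (which makes vertex stabilisers $G_v$ compact and open, hence infinite whenever $T$ has vertices of valency $\geq 3$, which it does since valency-one vertices are excluded and a transitive action on $\bd T$ with $|\bd T| \geq 3$ forces branching) and the finiteness of $H_v$. Concretely, $H_v \leq G_v$ is the set of elements of the compact open group $G_v$ fixing both ends of $\ell$, i.e. fixing $\ell$ pointwise up to the translation $t$; so $H_v^{(n)}$, the pointwise stabiliser of the $n$-ball around $v$ inside $H_v$, must be trivial for $n$ large. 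I would then argue that this forces the congruence subgroups $G_v^{(n)}$ to be "too small": using a product decomposition of $G_{x_0,x_\infty}^{(n)}$ into a root-group part and an $H_v^{(n)}$ part (as in the philosophy of Proposition~\ref{prop:VertexStab(n)}, though here we only have boundary-transitivity, so I would instead use that $G_v$ is compact and reduce to showing the root groups are also finite), one concludes that the root subgroups $U$ appearing are finite, and then $G_v$ itself is finite — contradicting that $G_v$ is an infinite (profinite) group acting transitively on the infinitely many ends through $v$.

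More carefully, the cleanest route avoiding boundary-Moufang machinery: since $G$ is $2$-transitive on $\bd T$, for any vertex $v \in \ell$ the stabiliser $G_v$ acts transitively on $\bd T \setminus$ (the end determined by any fixed direction), in particular transitively on the set of ends $\eta$ with $\eta$ not in a fixed half; since $\bd T$ is a Cantor set, $G_v$ is uncountable, hence infinite. Now consider $G_{v}^{(1)}$, the pointwise stabiliser of the ball of radius $1$ about $v$: it is an open subgroup of the infinite profinite group $G_v$, hence infinite; iterating, all $G_v^{(n)}$ are infinite. Pick $n$ with $H_v^{(n)} = 1$. Take two vertices $x_0, x_\infty \in \ell$ far from $v$ on either side and look at $J = G_{x_0,x_\infty}^{(n)}$, still an infinite open subgroup of $G_v$. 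Any $g \in J$ fixes a nontrivial segment of $\ell$, so $g.\xi \neq \xi'$ and $g.\xi' \neq \xi$; using transitivity of root-type elements one writes $g = u_1 u_2 h$ where $h$ lies in $H_v^{(n)} = 1$ and the $u_i$ lie in pointwise stabilisers of half-trees. But Lemma~\ref{lem:half-tree}-type arguments, or directly the discreteness hypothesis propagated through $H$, would force those half-tree stabilisers to be finite too, whence $J$ is finite — contradiction.

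The main obstacle I anticipate is making the reduction from "$H$ discrete'' to "the half-tree (or root) stabilisers are finite'' rigorous without assuming the boundary-Moufang condition: the clean product decompositions of Proposition~\ref{prop:VertexStab(n)} are stated only for boundary-Moufang $G$, so in the merely boundary-transitive setting I would need to either (a) show directly that a discrete $H$ forces $G_v$ to be topologically generated by finitely many conjugates of $H_v$ and finitely many "$\mu$-type'' elements, each contributing only finitely, or (b) invoke that $G^{(\infty)}$ (the monolith) is still boundary-transitive by Theorem~\ref{thm:BurgerMozes} and run the argument there, noting discreteness of $G_{\xi,\xi'}$ passes to the closed subgroup $G^{(\infty)}_{\xi,\xi'}$. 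Route (b) seems safer, since the monolith is topologically simple and non-compact, so it cannot be discrete (a discrete topologically simple group acting on a tree with infinitely many ends is impossible by the same cocompactness-versus-cardinality count), and then the discreteness of its two-point stabiliser is the lever that produces the contradiction. The bookkeeping of exactly which finite-index/finite-order claims one needs is the delicate part; everything else is soft.
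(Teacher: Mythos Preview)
The statement you are attempting to prove is labelled as a \emph{Conjecture} in the paper, and the paper does not prove it. What the paper does is: (a) show via Lemma~\ref{lem:EquivConj} that any counterexample must be boundary-Moufang (so Conjecture~\ref{conj:VirtCyclic:1} is equivalent to Conjecture~\ref{conj:VirtCyclic:2}); (b) prove the conjecture conditionally, assuming the root groups are nilpotent and Kegel's Conjecture~\ref{conj:KW} holds (Corollary~\ref{cor:Hua:NilpotentRootGps}), or unconditionally only when the root groups are abelian (Remark~\ref{rem:Ito}); and (c) restrict the shape of a potential counterexample (Proposition~\ref{prop:VirtCycl}). So there is no ``paper's own proof'' to compare against.

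Your argument has a genuine gap exactly at the point the paper identifies as the obstruction. Once $H_v^{(n)}=1$, the decomposition of Proposition~\ref{prop:VertexStab(n)} (which you are entitled to use, since by Lemma~\ref{lem:EquivConj} a discrete $H$ forces $G$ to be boundary-Moufang) gives
\[
G_{x_0,x_\infty}^{(n)} \;=\; U_{\infty,x_0}\cdot U_{0,x_\infty},
\]
a product of two compact pieces of root groups. You then assert that ``Lemma~\ref{lem:half-tree}-type arguments, or directly the discreteness hypothesis propagated through $H$, would force those half-tree stabilisers to be finite too''. This is the step that does not go through. The groups $U_{\infty,x_0}$ and $U_{0,x_\infty}$ are not half-tree stabilisers; they are compact open subgroups of the root groups, and nothing in the hypothesis prevents them from being infinite (indeed, in every known example they are infinite pro-$p$ groups). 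Lemma~\ref{lem:half-tree} kills pointwise stabilisers of half-trees, not stabilisers of rays, and discreteness of $H$ gives you no handle on the size of $U_{\infty,x_0}$. What you would actually need is to bound the solubility length of the product $U_{\infty,x_0}\cdot U_{0,x_\infty}$ and then invoke \cite{Willis07}; this is precisely Kegel's Conjecture~\ref{conj:KW}, which is open. Your route (b) does not help either: passing to the monolith $G^{(\infty)}$ preserves both the difficulty and the discreteness of the two-point stabiliser, and the sentence ``a discrete topologically simple group acting on a tree \dots'' is attacking the wrong target --- nobody claims $G^{(\infty)}$ is discrete, only that $G^{(\infty)}_{\xi,\xi'}$ is.
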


We remark that a counter-example to Conjecture~\ref{conj:VirtCyclic:1} necessarily satisfies the Moufang condition. Indeed, we have the following. 

\begin{lem}\label{lem:EquivConj}
Let $G \leq \Aut(T)$ be  closed, non-compact and boundary-transitive. Assume that the stabiliser $G_{\xi, \xi'}$ of some pair of distinct boundary points $\xi, \xi' \in \bd T$ is discrete.

Then for each hyperbolic element $\alpha \in G$, the contraction group $U_\alpha$ is closed. In particular $G$ is boundary-Moufang. 
\end{lem}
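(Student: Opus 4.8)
The plan is to show that the discreteness of $G_{\xi,\xi'}$ forces $\overline{U_\alpha} \cap P_{\alpha^{-1}}$ to be trivial, and then invoke Theorem~\ref{thm:BaumWillis}(ii) to conclude that $U_\alpha$ is closed; boundary-Moufangness will then follow from Theorem~\ref{thm:ClosedContraction}. So first I would use the $2$-transitivity of $G$ on $\bd T$ (Proposition~\ref{prop:2trans}) to reduce to the case where $\xi, \xi'$ are the repelling and attracting fixed points $\xi_-, \xi_+$ of the given hyperbolic element $\alpha$: indeed, if $G_{\eta,\eta'}$ is discrete for one pair, then it is discrete for every pair since $G$ acts transitively on pairs of distinct boundary points, and every hyperbolic element's fixed point pair arises this way. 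By Lemma~\ref{lem:parab} we have $P_\alpha = G_{\xi_-}$ and $P_{\alpha^{-1}} = G_{\xi_+}$, so that $P_\alpha \cap P_{\alpha^{-1}} = G_{\xi_-,\xi_+}$, which is discrete by hypothesis.

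The key step is then the following. By Theorem~\ref{thm:BaumWillis}(i), $\overline{U_\alpha} = (\overline{U_\alpha} \cap P_{\alpha^{-1}}) \cdot U_\alpha$, and the subgroup $\overline{U_\alpha} \cap P_{\alpha^{-1}}$ is normalised by $\alpha$ (since $\alpha$ normalises both $\overline{U_\alpha}$ and $P_{\alpha^{-1}}$). Moreover, for $u \in \overline{U_\alpha} \cap P_{\alpha^{-1}}$, the sequence $(\alpha^{-n} u \alpha^{n})_{n \geq 0}$ is bounded because $u \in P_{\alpha^{-1}}$, and I claim each accumulation point lies back in $\overline{U_\alpha} \cap P_{\alpha^{-1}}$ — this uses that $\overline{U_\alpha}$ is closed and $\alpha$-invariant, and that $P_{\alpha^{-1}}$ is closed and $\alpha$-invariant. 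But $\overline{U_\alpha} \cap P_{\alpha^{-1}}$ lies in $P_\alpha \cap P_{\alpha^{-1}} = G_{\xi_-,\xi_+}$, which is discrete; a bounded subset of a discrete group is finite, so the set of such accumulation points is finite, and since $u$ itself is one of them (take $n=0$) and conjugation by $\alpha^{-1}$ permutes this finite set, $u$ has finite $\langle\alpha\rangle$-orbit under conjugation. On the other hand, every element of $\overline{U_\alpha}$ is the limit of elements $g$ with $\alpha^n g \alpha^{-n} \to 1$; combining this with the structure $\overline{U_\alpha} = (\overline{U_\alpha}\cap P_{\alpha^{-1}})\cdot U_\alpha$ one finds that conjugation by $\alpha$ on $\overline{U_\alpha} \cap P_{\alpha^{-1}}$ is in fact \emph{contracting} (this is essentially the content of Theorem~\ref{thm:BaumWillis} applied to $\overline{U_\alpha}$: $U_\alpha$ is dense in $\overline{U_\alpha}$, so the $\alpha$-contraction group of $\overline{U_\alpha}$ is dense, forcing the whole of $\overline{U_\alpha}$, and in particular $\overline{U_\alpha}\cap P_{\alpha^{-1}}$, to be contracted by $\alpha$). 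A contracting automorphism of a non-trivial group has no non-trivial element of finite conjugacy orbit. Hence $\overline{U_\alpha} \cap P_{\alpha^{-1}} = 1$, and Theorem~\ref{thm:BaumWillis}(ii) gives that $U_\alpha$ is closed.

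Finally, once $U_\alpha$ is closed for this one $\alpha$, Lemma~\ref{lem:parab} shows $U_\alpha$ depends only on the repelling fixed point $\xi_-$, and since every boundary point is a repelling fixed point of some hyperbolic element in $G$ (using that $G$ is boundary-transitive and non-compact, cf.\@ the argument in Lemma~\ref{lem:Hua}), the contraction group of \emph{every} hyperbolic element is closed. Theorem~\ref{thm:ClosedContraction} then yields that $G$ is boundary-Moufang.

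I expect the main obstacle to be the precise argument that conjugation by $\alpha$ acts contractingly — or at least without non-trivial finite orbits — on $\overline{U_\alpha} \cap P_{\alpha^{-1}}$, i.e.\@ correctly extracting from Baumgartner--Willis (Theorem~\ref{thm:BaumWillis}) the fact that the density of $U_\alpha$ in $\overline{U_\alpha}$ forces $\overline{U_\alpha}$ itself to be contracted by $\alpha$. Once that is in hand, the discreteness hypothesis closes the argument immediately via finiteness of bounded subsets of discrete groups.
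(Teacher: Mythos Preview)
Your overall strategy matches the paper's, and the reduction via $2$-transitivity, Lemma~\ref{lem:parab}, and the final invocation of Theorem~\ref{thm:BaumWillis}(ii) and Theorem~\ref{thm:ClosedContraction} are all correct. The gap is exactly where you flagged it, and your proposed resolution is wrong.

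The claim that ``$U_\alpha$ is dense in $\overline{U_\alpha}$, so the whole of $\overline{U_\alpha}$ is contracted by $\alpha$'' is false in general: a dense contraction group does \emph{not} force the closure to be a contraction group. The lamplighter $(\prod_\ZZ F)\rtimes\ZZ$ from the introduction is precisely a counterexample to this principle---there $U_\alpha$ is dense in $\prod_\ZZ F$, yet $\alpha$ does not contract the constant sequences. So you cannot conclude that $\alpha$ acts contractingly on $\overline{U_\alpha}\cap P_{\alpha^{-1}}$ from density alone, and your finite-orbit argument, while correct as far as it goes, does not by itself yield triviality without that contraction.

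The paper closes the gap differently. First, every element of $\overline{U_\alpha}$ is elliptic: elements of $U_\alpha$ lie in the kernel of the Busemann character at $\xi_-$, this kernel is closed, hence so does $\overline{U_\alpha}$, and an element of $G_{\xi_-}$ in that kernel cannot be hyperbolic. Combined with the decomposition $G_{\xi_+,\xi_-}=H_c\rtimes\langle t\rangle$ of Lemma~\ref{lem:Hua}, this forces $\overline{U_\alpha}\cap P_{\alpha^{-1}}$ to sit inside the compact part $H_c$, which is \emph{finite} since $G_{\xi_+,\xi_-}$ is discrete. Now $U_\alpha\cap P_{\alpha^{-1}}$ is a finite subgroup on which $\alpha$ genuinely contracts, hence trivial; and then one invokes \cite[Lemma~3.31]{BaumgartnerWillis} (not Theorem~\ref{thm:BaumWillis}(ii) directly) to pass from $U_\alpha\cap P_{\alpha^{-1}}=1$ to $\overline{U_\alpha}\cap P_{\alpha^{-1}}=1$. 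Note incidentally that your discreteness-plus-convergence argument already gives $U_\alpha\cap P_{\alpha^{-1}}=1$ cleanly (for $u$ there, $\alpha^n u\alpha^{-n}\to 1$ inside a discrete set forces $u=1$); what you are missing is the extra Baumgartner--Willis lemma, or alternatively the ellipticity/finiteness route, to upgrade this to the closure.
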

\begin{proof}
Let $\alpha \in G$ be hyperbolic and $\xi_+, \xi_- \in \bd T$ be respectively its attracting and repelling fixed point. Such an element exists by Lemma~\ref{lem:Hua}, since $G$ is $2$-transitive on $\bd T$ by Proposition~\ref{prop:2trans}. 

By Lemma~\ref{lem:parab} we have $P_{\alpha\inv} = G_{\xi_+}$ and $\overline{U_\alpha} \leq G_{\xi_-}$.  By assumption the intersection $G_{\xi_+} \cap G_{\xi_-} = G_{\xi_+, \xi_-}$ is discrete. It is therefore finite-by-cyclic, hence cyclic-by-finite by Lemma~\ref{lem:Hua}.  Since every element of $\overline{U_\alpha}$ is elliptic, it follows that $\overline{U_\alpha} \cap P_{\alpha\inv}$ is a compact subgroup of $G_{\xi_+, \xi_-}$, and is therefore finite.
We infer that $U_\alpha \cap  P_{\alpha\inv}$ is a finite, hence closed, subgroup normalised by $\alpha$, on which $\alpha$ acts as a contracting automorphism. Thus $U_\alpha \cap  P_{\alpha\inv}$ is trivial, and hence so is $\overline{U_\alpha} \cap P_{\alpha\inv}$ by \cite[Lemma~3.31]{BaumgartnerWillis}.
By Theorem~\ref{thm:BaumWillis}, this implies that $U_\alpha $ is closed. Theorem~\ref{thm:ClosedContraction} then guarantees that $G$ is boundary-Moufang.
\end{proof}

In view of Lemma~\ref{lem:EquivConj},  Conjecture~\ref{conj:VirtCyclic:1} is equivalent to the following. 

\begin{conj}\label{conj:VirtCyclic:2}
Let $T$ be a locally finite tree and $G \leq \Aut(T)$ be boundary-Moufang. Then the stabiliser $G_{\xi, \xi'}$ of a pair of distinct boundary points $\xi, \xi' \in \bd T$ is not virtually cyclic.
\end{conj}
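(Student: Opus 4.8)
The plan is to recast the statement geometrically and then attempt to produce, by hand, an infinite family of elements fixing the axis pointwise. By Lemma~\ref{lem:Hua} we may write $H = G_{\xi,\xi'} = H_c \rtimes \la t\ra$, where $\ell$ denotes the geodesic line joining $\xi = 0$ to $\xi' = \infty$, the compact group $H_c$ is the pointwise stabiliser $\Fix_G(\ell)$ (an element of $H$ fixes $\ell$ pointwise precisely when its translation length is zero), and $t$ translates $\ell$. Since $H_c$ is profinite and $H/H_c \cong \ZZ$, the group $H$ is virtually cyclic if and only if $H_c$ is finite. Thus the conjecture is equivalent to the assertion that $\Fix_G(\ell)$ is \emph{infinite}, and this is what I would try to establish directly.

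To manufacture elements of $\Fix_G(\ell)$, I would use the $\mu$-maps together with Lemmas~\ref{lem:mu-map} and \ref{lem:Mu}. Choose a vertex $v_0 \in \ell$ of valency $>2$ (such vertices occur along $\ell$ by cocompactness of the action) and fix, by Lemma~\ref{lem:Mu}, an element $a_0 \in U_0^*$ such that $\tau := \mu_{a_0}$ swaps $0$ and $\infty$ and fixes $v_0$. For any $a \in U_0^*$ with $\mu_a$ also fixing $v_0$, both $\mu_a$ and $\tau$ preserve $\ell$, interchange its two ends, and fix $v_0$, so each acts on $\ell$ as the reflection about $v_0$; consequently $h_a := \mu_a \tau$ fixes $\ell$ pointwise, i.e.\ $h_a \in \Fix_G(\ell) = H_c$. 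By Lemma~\ref{lem:mu-map}, $\mu_a$ fixes $v_0$ exactly when $a.\ell \cap \ell = [v_0, 0)$, that is, when the geodesic $[0, a.\infty)$ leaves $\ell$ precisely at $v_0$. Since $v_0$ has valency $>2$, the set of ends $\eta$ with $[0,\eta)$ branching off $\ell$ at $v_0$ is infinite, and as $U_0$ is sharply transitive on $\bd T \minus \{0\}$ there is for each such $\eta$ a unique $a \in U_0^*$ with $a.\infty = \eta$. Hence the family $\{h_a\}$ is indexed by an infinite set.

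It remains to show that infinitely many of the $h_a$ are distinct, and this is where the real difficulty lies. Since $h_a = h_b$ forces $\mu_a = \mu_b$, and since $\mu_a$ is the unique element of the double coset $U_\infty a U_\infty$ interchanging $0$ and $\infty$, we have $h_a = h_b$ if and only if $b \in U_\infty a U_\infty$. Thus the image $\{h_a : \mu_a \text{ fixes } v_0\}$ is infinite exactly when the infinite set $\{a \in U_0^* : a.\ell \cap \ell = [v_0,0)\}$ meets infinitely many double cosets $U_\infty a U_\infty$; equivalently, no single double coset should absorb infinitely many of these $a$. In the classical situation this is immediate: for $\PSL_2(k)$ the double coset $U_\infty a U_\infty$ is determined by the `lower-left entry' of $a$, so $U_0^* \cap U_\infty a U_\infty = \{a\}$, and the construction already recovers the full compact torus $H_c$ (for instance $\ZZ_p^*$ over $\QQ_p$). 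The main obstacle is to exclude, for a general boundary-Moufang group with possibly exotic (non-abelian) root groups, the pathology that $U_0^* \cap U_\infty a U_\infty$ is infinite, i.e.\ that the abstract $\mu$-map identifications collapse the geometrically infinite family $\{a : a.\ell\cap\ell=[v_0,0)\}$ into finitely many group elements.

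To attack this obstacle I would study the fibres of $a \mapsto \mu_a$, equivalently the intersections $U_0^* \cap U_\infty a U_\infty$, combining the contraction-group structure of the root groups (Theorem~\ref{thm:GlocknerWillis}, and Lemma~\ref{lem:PrimeExponent} in positive characteristic) with the continuity of $\mu$ (Proposition~\ref{prop:mu-map:continuous}). The defining relation $b = p\,a\,q$ with $a,b \in U_0^*$ and $p,q \in U_\infty$ couples an element of a contracted root group with elements of the expanded one, and I would hope that expanding it in the filtration provided by $\psi$ (Lemma~\ref{lem:psi(n)}) forces $p$, $q$, and hence $b$, to be determined by $a$ up to bounded ambiguity, so that only finitely many $b$ can share a given $\mu$-value. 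I expect such an analysis to succeed when the root groups are abelian or nilpotent $p$-adic analytic, recovering the classical picture; but a \emph{uniform} argument valid for all totally disconnected contraction groups — in particular for the torsion contraction groups of positive characteristic — appears to be exactly the missing ingredient, which is why the assertion is recorded here as a conjecture rather than proved.
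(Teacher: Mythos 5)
You should first be aware that the statement you were asked to prove is recorded in the paper as Conjecture~\ref{conj:VirtCyclic:2}: the paper contains \emph{no} proof of it, only conditional and partial results, so the fact that your argument does not close is not a defect of execution but the open problem itself --- and you correctly diagnose this in your final paragraph. Your reductions are sound: by Lemma~\ref{lem:Hua}, $H = H_c \rtimes \langle t \rangle$ with $H_c = \Fix_G(\ell)$ compact, so the conjecture is indeed equivalent to $\Fix_G(\ell)$ being infinite; the Hua-map construction $h_a = \mu_a \tau$ does land in $\Fix_G(\ell)$ whenever $\mu_a$ and $\tau$ both fix $v_0$ and swap the ends; and the equivalence $h_a = h_b \Leftrightarrow b \in U_\infty a U_\infty$ follows correctly from the uniqueness in Proposition~\ref{prop:mu}. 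The genuine gap is exactly where you place it: nothing in the general theory of Moufang sets (or of totally disconnected contraction groups) excludes the possibility that the infinite geometric family $\{ a \in U_0^* : a.\ell \cap \ell = [v_0, 0) \}$ is swallowed by finitely many double cosets $U_\infty a U_\infty$, and no filtration argument of the kind you sketch is currently known to bound the fibres of $a \mapsto \mu_a$ uniformly --- in particular in the positive-characteristic torsion case. (One small inaccuracy: in $\PSL_2(k)$ the fibre is $\{\pm a\}$ rather than $\{a\}$, since $\mu_{-a} = \mu_a$ modulo the centre; this is harmless for your counting, but the computation of $U_0^* \cap U_\infty a U_\infty$ should be done in $\PSL_2$, not $\SL_2$.)

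It is worth comparing your strategy with what the paper actually establishes, because the two run in opposite directions. You try to \emph{construct} infinitely many elements of $H_c$ directly; the paper instead \emph{assumes} $H$ virtually cyclic and derives a structural contradiction: if $H_c$ is finite then $H_v^{(n)} = 1$ for some $n$, and Proposition~\ref{prop:VertexStab(n)} then yields a compact open subgroup $V$ with the exact factorisation $V = (V \cap U_0)(V \cap U_\infty)$ (Lemma~\ref{lem:Hua:NilpotentRootGps}). When the root groups are nilpotent, every finite quotient of $V$ is a product of two nilpotent groups of bounded classes, so Ito's theorem (abelian case, Remark~\ref{rem:Ito}) or Kegel's Conjecture~\ref{conj:KW} (nilpotent case, Corollary~\ref{cor:Hua:NilpotentRootGps}) forces $V$ to be soluble, contradicting the topological simplicity of the monolith (Theorem~\ref{thm:BurgerMozes}) via \cite[Theorem~2.2]{Willis07}; for torsion-free root groups the conjecture is settled unconditionally by a classification argument (Corollary~\ref{cor:padic:Hua}). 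So the unconditional cases known to the paper (abelian, torsion-free) coincide with the cases where you predict your fibre analysis would succeed, but the mechanisms are genuinely different: a uniform proof of your fibre-finiteness claim would constitute a new, direct attack on the conjecture, independent of Kegel's factorisation conjecture, and would be a real contribution rather than a reproof.
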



We shall now relate this to a conjecture in finite group theory, going back to O.~Kegel in the sixties.

\subsection{Products of nilpotent groups and the Kegel--Wielandt theorem}\label{sec:Kegel}

A well-known theorem by Kegel--Wielandt (see \cite{Kegel} and \cite{Wielandt}) asserts that if a finite group $G$ is a product of  two nilpotent subgroups $A, B$, then $G$ is soluble (and if $A \neq B$ then $G$ has a proper normal subgroup containing $A$ or $B$).  Whether the derived length of $G$ can be bounded in terms of the nilpotency classes of $A$ and $B$ is however an open problem going back to O.~Kegel~\cite{Kegel1965} (the conjecture mentioned by Kegel in \emph{loc.~cit.} is phrased slightly differently; we refer to \cite{CosseyStonehewer} for a more detailed discussion around this conjecture): 

\begin{conj}[O. Kegel]\label{conj:KW}
There is a function $f\colon \NN \times \NN \to \NN$ such that if a finite group $G$ is a product of two nilpotent groups  of respective nilpotency classes $a$ and $b$, then $G$ is soluble of derived length at most $ f(a, b)$. 
\end{conj}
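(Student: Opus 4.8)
The plan is to take the Kegel--Wielandt theorem as an input -- so that $G = AB$ is already known to be soluble -- and to attempt to bound the derived length by an induction on $|G|$ along a chief series. Write $a = \mathrm{cl}(A)$, $b = \mathrm{cl}(B)$, and let $N \trianglelefteq G$ be a minimal normal subgroup; since $G$ is soluble, $N$ is an elementary abelian $p$-group for some prime $p$. Then $G/N = (AN/N)(BN/N)$ is again a product of two nilpotent subgroups whose classes do not exceed $a$ and $b$ (they are homomorphic images of $A$ and $B$), so an inductive hypothesis ``derived length at most $d$'' for $G/N$ would give $G^{(d)} \le N$ and hence derived length at most $d+1$ for $G$. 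This crude recursion is not enough on its own, however: it only yields a bound that grows with the chief length of $G$, whereas the conjecture demands a bound $f(a,b)$ that does not see $|G|$ at all.

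Consequently the real content would lie in the base case together with a \emph{uniform local estimate}. The base case $f(1,1) = 2$ is It\^o's theorem that a product of two abelian subgroups is metabelian; more optimistically one hopes for something like $f(a,b) \le g(a) + g(b)$, the guiding idea being that each nilpotent factor contributes only boundedly to the derived length. The key local problem is then the following: given $G = AB$ acting faithfully and irreducibly on an elementary abelian $p$-group $V$ (a chief factor of a larger group), with $A,B$ nilpotent of classes $\le a, \le b$, bound the derived length of the image of $G$ in $\mathrm{GL}(V)$ independently of $\dim V$ and of $p$. One would attack this through the interplay of coprime and non-coprime action, the structure of the Sylow subgroups of $A$ and $B$ acting on $V$, and fixed-point / stable-subspace arguments, trying to show that the part of the action which is not ``already abelian'' is concentrated on boundedly many levels.

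The main obstacle -- and the reason the conjecture remains open in general -- is exactly the transition from ``bounded on each chief factor'' to ``bounded overall''. A factorisation $G = AB$ passes very poorly to subgroups and sections: a subgroup of $G$ need not be the product of a subgroup of $A$ and a subgroup of $B$, so one cannot localise the hypothesis and then reassemble it. The derived length accumulated across successive chief factors can therefore, a priori, grow without bound, and the only cases in which this accumulation has been controlled are those with $\min(a,b)$ very small (work of Pennington and of Cossey--Stonehewer \cite{CosseyStonehewer} and related results). I would thus expect a full solution to require a genuinely new ingredient: either an invariant that directly measures this cross-level accumulation, or a uniform structural constraint on how a product of two nilpotent linear groups can sit inside $\mathrm{GL}_n(p)$ with bounds independent of $n$.
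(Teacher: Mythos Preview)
Your write-up is not a proof, and you say so yourself: you sketch an inductive attack along a chief series, observe that the recursion does not yield a bound independent of $|G|$, and then explain why the remaining local problem is hard. That is an honest account of the situation, but it is not a proof of the statement.

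There is, however, nothing to compare it to: in the paper this statement is \emph{Conjecture~\ref{conj:KW}}, not a theorem. The authors do not prove it; they record it as an open question going back to Kegel, cite \cite{CosseyStonehewer} for background, and then use it only as a \emph{hypothesis} in later results (Corollary~\ref{cor:Hua:NilpotentRootGps}, Lemma~\ref{lem:CentraliserHua}). The one unconditional instance they invoke is It\^o's theorem for the abelian-by-abelian case (Remark~\ref{rem:Ito}), exactly the base case $f(1,1)=2$ you mention. So your assessment that the general conjecture remains open, and that the obstacle is the failure of the factorisation to pass to sections, is in line with the paper's treatment; there is simply no ``paper's own proof'' to match your proposal against.
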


The relation between Conjecture~\ref{conj:KW} and those from the preceding subsection is provided by Corollary~\ref{cor:Hua:NilpotentRootGps} below.
We will need the following lemma, which is a consequence of Proposition~\ref{prop:VertexStab(n)}.

\begin{lem}\label{lem:Hua:NilpotentRootGps}
Let $G \leq \Aut(T)$ be boundary-Moufang and let $0\neq \infty \in \bd T$.  

If the Hua subgroup $H=G_{0, \infty}$ is virtually cyclic, then some compact open subgroup $V \leq G$ has a product decomposition $V = (V \cap U_0)\cdot (V \cap U_\infty)$. In particular the little projective group $G^+$ is open of finite index in $G$.  
\end{lem}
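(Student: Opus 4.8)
The plan is to use the product decomposition from Proposition~\ref{prop:VertexStab(n)} in the special case where the Hua subgroup is virtually cyclic, and to exploit the fact that a virtually cyclic group has a torsion-free finite-index subgroup which acts freely by translation on a single geodesic line.

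First I would fix $0, \infty \in \bd T$ and the line $\ell$ joining them, and invoke Lemma~\ref{lem:Hua} to write $H = H_c \rtimes \la t\ra$ with $H_c$ compact acting trivially on $\ell$ and $t$ a translation. The hypothesis that $H$ is virtually cyclic forces $H_c$ to be finite. Pick a vertex $v$ on $\ell$. Then $H_v = H_c \cap G_v$ (since $t$ and its nontrivial powers translate $v$), so $H_v$ is finite; and more importantly, since $H_c$ is finite, there is an $n$ such that $H_c$ fixes the $n$-ball around $v$ pointwise, i.e.\@ $H_v^{(n)} = H_c^{(n)} = 1$. Now choose $x_0 \in (0,v]$ and $x_\infty \in [v,\infty)$ on $\ell$ with $d(x_0,v) = d(x_\infty, v) = \psi(n)$, where $\psi$ is the map from Lemma~\ref{lem:psi(n)}. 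By Proposition~\ref{prop:VertexStab(n)}, the product map
\[
  U_{\infty, x_0} \times U_{0, x_\infty} \times H_v^{(n)} \to G_{x_0, x_\infty}^{(n)}
\]
is a bijection. Since $H_v^{(n)} = 1$, this yields $G_{x_0, x_\infty}^{(n)} = U_{\infty, x_0} \cdot U_{0, x_\infty}$, a product of two subgroups of the root groups $U_\infty$ and $U_0$. Setting $V = G_{x_0, x_\infty}^{(n)}$, this is a compact open subgroup of $G$ (it is the pointwise stabiliser of a finite subtree, hence compact open), and $V \cap U_0 \supseteq U_{0, x_\infty}$, $V \cap U_\infty \supseteq U_{\infty, x_0}$; conversely $U_{0,x_\infty} \supseteq V \cap U_0$ since any element of $V$ lying in $U_0$ fixes $x_0$ and $x_\infty$ already, and $U_{0,x_\infty}$ is precisely the elements of $U_0$ fixing $x_\infty$ (which automatically fix all of $[x_\infty, 0)$, hence $x_0$ too, as $x_0 \in (0, x_\infty]$ — wait, one must check $x_0$ lies on the correct side; indeed $x_0 \in (0, v] \subseteq (0, x_\infty]$). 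Thus $V \cap U_0 = U_{0,x_\infty}$ and $V \cap U_\infty = U_{\infty, x_0}$, giving $V = (V\cap U_0)\cdot(V \cap U_\infty)$ as claimed.

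For the final assertion that $G^+$ is open of finite index in $G$: since $V = (V \cap U_0)(V \cap U_\infty) \le G^+$, the little projective group contains the compact open subgroup $V$, so $G^+$ is open in $G$. An open subgroup of a locally compact group is closed, and $G/G^+$ is then discrete; but $G^{(\infty)} = \overline{G^+} = G^+$ acts cocompactly on $T$ by Proposition~\ref{prop:LittleProjMonolith}, so $G = G^+ \cdot G_v$ for any vertex $v$ (as $G^+$ is cocompact and $G_v$ is compact open covering a fundamental domain), hence $[G : G^+] \le [G_v : G^+ \cap G_v] < \infty$ since $G_v$ is compact and $G^+ \cap G_v$ is open in it. This gives finite index.

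The main obstacle I expect is the bookkeeping in identifying $V \cap U_0$ with $U_{0,x_\infty}$ exactly (and similarly for $U_\infty$) — one must be careful that the root-group elements fixing the far vertex $x_\infty$ automatically fix the whole relevant portion of $\ell$ including $x_0$, which relies on the description in Lemma~\ref{lem:RootGroupElliptic} that a root group element fixes pointwise a geodesic ray pointing to its fixed end. The other slightly delicate point is justifying $H_v^{(n)} = 1$ for suitable $n$ from finiteness of $H_c$; but since $H_c$ is a finite group acting on the locally finite tree, the pointwise stabiliser of a large enough ball around $v$ within $H_c$ is trivial, because a nontrivial element would have to move some vertex and the orbits are finite, so eventually the congruence subgroups of the finite group $H_c$ vanish.
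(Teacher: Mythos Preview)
Your proof is correct and follows essentially the same route as the paper's: use Lemma~\ref{lem:Hua} to see that $H_v = H_c$ is finite, choose $n$ with $H_v^{(n)}=1$, apply Proposition~\ref{prop:VertexStab(n)} to obtain $V = G_{x_0,x_\infty}^{(n)} = U_{\infty,x_0}\cdot U_{0,x_\infty}$, and conclude that $G^+$ is open and of finite index via Proposition~\ref{prop:LittleProjMonolith}. One wording slip: where you write ``$H_c$ fixes the $n$-ball around $v$ pointwise'' you of course mean the opposite, that no nontrivial element of $H_c$ does so --- your ``i.e.\ $H_v^{(n)}=1$'' and your later discussion make the intended meaning clear.
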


\begin{proof}
By Lemma~\ref{lem:Hua} we have $H = H_c \rtimes \la t \ra$, where  $t$ is a  hyperbolic element and $H_c$ is compact and acts trivially on the geodesic line $\ell \subset T$  joining $0$ to $\infty$. 

Let $v \in \ell$ be any vertex. Thus we have $H_c = H_v$.
Since $H$ is virtually cyclic, $H_v$ is finite; hence
there exists some  $n \geq 0$ such that the $n^{\text{th}}$ congruence subgroup $H_v^{(n)}$ is trivial. By Proposition~\ref{prop:VertexStab(n)}, this implies that the stabiliser $G_v$ has some open subgroup $V$ of the form $V = G_{x_0, x_\infty}^{(n)}$ for some $x_0, x_\infty \in \ell$, which is the product of its intersections with the root groups $U_0$ and $U_\infty$. 

Since $U_0$ and $U_\infty$ are contained in the little projective group $G^+$, it follows that $V \leq G^+$, and hence $G^+$ is open, hence closed, in $G$. Since $G/G^+$ is compact by Proposition~\ref{prop:LittleProjMonolith}, it follows that $G^+$ has finite index.
\end{proof}

We deduce a sufficient condition for Conjecture~\ref{conj:VirtCyclic:2} to hold. 

\begin{cor}\label{cor:Hua:NilpotentRootGps}
Let $G \leq \Aut(T)$ be boundary-Moufang. 

If the root groups are nilpotent and if Conjecture~\ref{conj:KW} holds, then the Hua subgroup is not virtually cyclic. 
\end{cor}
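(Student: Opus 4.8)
The plan is to argue by contraposition: assume the Hua subgroup $H = G_{0,\infty}$ is virtually cyclic and derive a contradiction with the hypotheses (nilpotent root groups together with Conjecture~\ref{conj:KW}). So suppose $H$ is virtually cyclic. First I would invoke Lemma~\ref{lem:Hua:NilpotentRootGps}: it produces a compact open subgroup $V \leq G$ with a product decomposition $V = (V \cap U_0)\cdot(V \cap U_\infty)$, and moreover tells us that $G^+$ is open of finite index in $G$. Thus $V$ is a \emph{profinite} group which factors as a product of two closed subgroups $A = V \cap U_0$ and $B = V \cap U_\infty$, each of which is a closed subgroup of a root group, hence nilpotent (being a subgroup of a nilpotent group) — and in fact, since the root groups are nilpotent of some fixed class, say $c$, both $A$ and $B$ are nilpotent of class at most $c$.

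Next I would pass to a finite quotient. For any open normal subgroup $W \trianglelefteq V$, the finite group $V/W$ is a product $(AW/W)\cdot(BW/W)$ of two nilpotent subgroups of class at most $c$; applying Conjecture~\ref{conj:KW} gives that $V/W$ is soluble of derived length at most $f(c,c)$, a bound \emph{independent of $W$}. Since $V$ is the inverse limit of its finite quotients $V/W$ and derived length is inherited by inverse limits in the sense that $V^{(f(c,c))} = \bigcap_W V^{(f(c,c))}W = 1$, we conclude that $V$ itself is soluble (pro-)of derived length at most $f(c,c)$; in particular $V$ is a soluble compact open subgroup of $G$. The point of this step is that the Kegel bound is uniform, so it survives the projective limit; without Conjecture~\ref{conj:KW} one would only get solubility of each finite quotient with no control, which is not enough.

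Finally I would derive the contradiction from the existence of a soluble compact open subgroup together with the global structure of $G$. By Proposition~\ref{prop:LittleProjMonolith} the closure $\overline{G^+}$ is topologically simple, compactly generated, non-compact, and boundary-Moufang. A soluble compact open subgroup $V$ of $G$ meets $\overline{G^+}$ in a soluble compact open subgroup of $\overline{G^+}$; but a compactly generated topologically simple totally disconnected locally compact group which is non-compact and boundary-transitive on a tree cannot be \emph{locally soluble} — indeed a non-trivial closed normal subgroup of $V \cap \overline{G^+}$ would have to be (virtually) detectable, and more directly, one may appeal to the structure of contraction groups: some root group $U_\xi$ is contained in $V$ up to conjugacy on a ball, hence locally soluble root groups combined with Corollary~\ref{cor:contraction_abelianization} (which forces $G^+$ perfect) and Proposition~\ref{prop:LittleProjSimple} (which then makes $G^+$ abstractly simple) yield that $G^+$ is a simple non-abelian group all of whose compact open subgroups are soluble; such a group is impossible since a non-abelian simple locally compact group cannot be locally soluble (its compact open subgroups would be soluble, forcing, by compact generation, $G^+$ itself to be soluble, contradicting simplicity and non-triviality). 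This contradiction shows $H$ is not virtually cyclic, which is exactly Conjecture~\ref{conj:VirtCyclic:2} in this case.

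The main obstacle I expect is the very last step: converting "$G$ has a soluble compact open subgroup" into an outright contradiction requires care, because local solubility of a totally disconnected group does not immediately contradict topological simplicity in general (there exist topologically simple t.d.l.c.\ groups). The resolution has to exploit the specific geometric situation — that $\overline{G^+}$ acts cocompactly and boundary-transitively on $T$ with the soluble compact open subgroup containing, up to conjugacy, arbitrarily large finite balls' pointwise stabilisers, which are intertwined with a hyperbolic element acting as a contracting automorphism. Using Corollary~\ref{cor:contraction_abelianization} one sees $G^+$ is perfect, and then Proposition~\ref{prop:LittleProjSimple} gives that $G^+$ is simple as an abstract group; but a perfect group cannot equal a product $A\cdot B$ inside a compact open subgroup with $A,B$ nilpotent and then be covered (via cocompactness and the Bruhat-type decomposition of Proposition~\ref{prop:VertexStab}) by soluble pieces unless it is itself soluble — so the cleanest route is probably to show directly that $G^+$ would be soluble, contradicting that it is non-trivial and perfect.
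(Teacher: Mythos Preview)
Your argument through the point where you conclude that $V$ is soluble is correct and matches the paper's proof exactly: invoke Lemma~\ref{lem:Hua:NilpotentRootGps}, pass to finite quotients, apply Conjecture~\ref{conj:KW} with the uniform bound $f(c,c)$, and take the inverse limit.

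The genuine gap is precisely where you yourself flag it: your final step does not work as written. The claim that ``compact generation together with all compact open subgroups soluble forces $G^+$ itself to be soluble'' is simply false --- a group generated by a soluble compact open subgroup $V$ and a single hyperbolic element $t$ need not be soluble (think of an HNN-type situation), and none of the decompositions from Proposition~\ref{prop:VertexStab}, Corollary~\ref{cor:contraction_abelianization}, or Proposition~\ref{prop:LittleProjSimple} repair this. Abstract simplicity of $G^+$ together with a soluble open subgroup is not an immediate contradiction either: you would need to show that the soluble open subgroup forces a non-trivial proper normal subgroup, and your sketch does not do this.

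The paper closes the gap by citing an external result: \cite[Theorem~2.2]{Willis07}, which says that a compactly generated, topologically simple, totally disconnected locally compact group has no soluble open subgroup. One applies this to the monolith $G^{(\infty)} = \overline{G^+}$, which is compactly generated and topologically simple by Theorem~\ref{thm:BurgerMozes}, and which inherits a soluble open subgroup from $V$. This is a substantive theorem, not a formality, and there is no shortcut around it in the paper's argument; you should invoke it directly rather than attempt to improvise.
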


\begin{proof}
Let $V \leq G$ be a compact open subgroup as in Lemma~\ref{lem:Hua:NilpotentRootGps}. Then $V$ is a product of two nilpotent groups,  and so is every finite quotient of $V$, with a uniform bound on the nilpotency classes of the factors. Conjecture~\ref{conj:KW} then implies that  $V$ is soluble. Thus $G$ has some open soluble subgroup. In particular, so does the intersection $G^{(\infty)}$ of all non-trivial closed normal subgroups of $G$. By Theorem~\ref{thm:BurgerMozes}, however, the group $G^{(\infty)}$ is topologically simple and compactly generated. Thus we have arrived at a contradiction, since a compactly generated totally disconnected locally compact group that is topologically simple cannot have any soluble open subgroup by \cite[Theorem~2.2]{Willis07}.
\end{proof}

\begin{remark}\label{rem:Ito}
Conjecture~\ref{conj:KW} is known to hold for products of abelian groups, which are necessarily metabelian by a classical result of Ito \cite{Ito1955}. The proof of Corollary~\ref{cor:Hua:NilpotentRootGps} therefore shows that Conjecture~\ref{conj:VirtCyclic:2} holds if the root groups are abelian. 
\end{remark}

\begin{remark}
The tight relations between the above conjectures provide strong evidence that Conjecture~\ref{conj:VirtCyclic:1} holds. Indeed, assume that this is not the case and let $G \leq \Aut(T)$ be a counterexample. By Lemma~\ref{lem:EquivConj} the group $G$ is then boundary-Moufang. If root groups are nilpotent, then Corollary~\ref{cor:Hua:NilpotentRootGps} implies that Kegel's conjecture~\ref{conj:KW} fails. Otherwise we have an example of a proper Moufang set with non-nilpotent root groups, which contradicts one of the main conjectures on Moufang sets. 
\end{remark}

\subsection{Reducing to the pro-$p$ case}

A theorem by Elizabeth Pennington~\cite{Pennington} ensures that if a finite group $G$ is a product $G= A.B$ of two nilpotent groups of respective classes $a$ and $b$, then the $(a+b)^{\text{th}}$ derived group $G^{(a+b)}$ is nilpotent. This implies in particular that Kegel's Conjecture~\ref{conj:KW} can be reduced to the special case that $G$ is a $p$-group. 

Here we shall use Pennington's result to establish the following restriction on a possible counterexample to Conjecture~\ref{conj:VirtCyclic:2}. 

\begin{prop}\label{prop:VirtCycl}
Let $T$ be a locally finite tree and $G \leq \Aut(T)$ be boundary-Moufang with nilpotent root groups. Assume that the stabiliser  $G_{\xi, \xi'}$ of a pair of distinct boundary points $\xi, \xi' \in \bd T$ is virtually cyclic. 

Then the little projective group $G^+$ is open of finite index in $G$, and is abstractly simple. Moreover there is a prime $p$ such that the pointwise stabiliser in $G^+$  of each ball of radius~$2$ is a pro-$p$ group. In particular, all compact open subgroups of $G$ are virtually pro-$p$ groups. 
\end{prop}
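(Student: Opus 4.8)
The plan is to build on the hypotheses and earlier structural results in several stages. First I would invoke Lemma~\ref{lem:Hua:NilpotentRootGps}: since $H = G_{\xi,\xi'}$ is virtually cyclic, there is a compact open subgroup $V \leq G$ with a product decomposition $V = (V \cap U_\xi)\cdot(V \cap U_{\xi'})$, and moreover $G^+$ is open of finite index in $G$. The abstract simplicity of $G^+$ is then immediate from Proposition~\ref{prop:LittleProjSimple}, since nilpotent root groups are in particular soluble.

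The heart of the matter is the existence of a prime $p$ making the radius-$2$ congruence subgroups of $G^+$ pro-$p$. Here I would use Pennington's theorem as announced: $V$, being a product of two nilpotent groups of bounded classes $a,b$, has the property that every finite quotient $\bar V$ satisfies that $\bar V^{(a+b)}$ is nilpotent, hence $\bar V$ is soluble of bounded derived length (so $V$ itself is pro-soluble). I would then want to locate a single prime $p$ governing the situation. The natural route is to pass to the stabiliser $G_v$ of a vertex $v$ on the line $\ell$ joining $\xi$ to $\xi'$: since $H$ is virtually cyclic, $H_v = H_c$ is finite, so some congruence subgroup $H_v^{(n)}$ is trivial, and by Proposition~\ref{prop:VertexStab(n)} an open subgroup of $G_v$ is simply a product of (congruence pieces of) the two opposite root groups. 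The root groups are contraction groups (Lemma~\ref{lem:RootGroups:contraction}) so by Theorem~\ref{thm:GlocknerWillis} each decomposes, up to the torsion part, as a product of nilpotent $p_i$-adic analytic groups; being also nilpotent by hypothesis, each root group is — locally — pro-nilpotent, hence a product over primes of its Sylow pro-$q$ parts. I would argue that in fact a single prime $p$ occurs: if two distinct primes $p \neq q$ both appeared in a root group $U_\xi$, the corresponding pro-$p$ and pro-$q$ pieces would be normalised by the Hua torus $\langle t\rangle$, and one could produce, inside a compact open subgroup, a genuine product of two non-trivial nilpotent groups of coprime orders in every finite quotient, forcing (via Kegel--Wielandt solubility plus the structure of $G^{(\infty)}$, exactly as in Corollary~\ref{cor:Hua:NilpotentRootGps}) an open soluble subgroup of the topologically simple compactly generated group $G^{(\infty)}$, contradicting \cite[Theorem~2.2]{Willis07}. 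Once a uniform prime $p$ is fixed, Proposition~\ref{prop:pro-p} (or directly the index computation in its proof, applied with the decomposition of Proposition~\ref{prop:VertexStab(n)} and the fact that $H_v^{(n)} = 1$ for suitable $n$) shows the relevant congruence subgroups are pro-$p$. Choosing $n$ so that $\psi(n) \le 2$ — possible after discarding valency-$2$ vertices, or by a direct estimate — gives that the pointwise stabiliser in $G^+$ of a $2$-ball is pro-$p$; since $G^+$ has finite index in $G$ and these $2$-ball stabilisers are open, every compact open subgroup of $G$ is virtually pro-$p$.

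I expect the main obstacle to be the step isolating a \emph{single} prime $p$: a priori the root group could be pro-nilpotent with several Sylow components, and the argument that the presence of two primes forces a forbidden open soluble subgroup of $G^{(\infty)}$ needs to be made precise — in particular one must check that the coprime pro-$p$ and pro-$q$ parts of $U_\xi$ really are normal (they are characteristic in $U_\xi$, which is normal in $G_\xi$) and that the product decomposition of a compact open subgroup from Lemma~\ref{lem:Hua:NilpotentRootGps} descends to genuine factorisations of finite quotients with the two factors of coprime order, so that Kegel--Wielandt (in the strong Ito form of Remark~\ref{rem:Ito} when root groups are abelian, or Pennington's nilpotency conclusion in general) yields solubility with a \emph{uniform} bound on derived length, hence pro-solubility of $V$, hence an open soluble subgroup of $G$ and therefore of $G^{(\infty)}$. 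A secondary technical point is the passage from ``radius $\psi(n)$'' to ``radius $2$'': this requires either the normalisation that $T$ has no valency-$2$ vertices (so the $G^+$-action is edge-transitive and $\psi$ can be controlled) or an explicit bookkeeping that $\psi(n)$ for the smallest useful $n$ does not exceed $2$ after the congruence subgroup $H_v^{(n)}$ has already become trivial; I would handle this by choosing $v$ and the radii as in the proof of Proposition~\ref{prop:pro-p} and then noting that congruence subgroups of smaller radius only differ from those of radius $2$ by a finite $p$-group factor coming from the (now trivial or $p$-group) Hua piece and the root-group pieces, all of which are pro-$p$.
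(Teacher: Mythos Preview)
Your opening is correct and matches the paper: Lemma~\ref{lem:Hua:NilpotentRootGps} gives a compact open $V$ that is a product of two nilpotent subgroups and shows $G^+$ is open of finite index, and Proposition~\ref{prop:LittleProjSimple} then gives abstract simplicity.

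The core of your argument, however, has a genuine gap. To isolate a single prime you invoke ``Kegel--Wielandt \dots\ yields solubility with a \emph{uniform} bound on derived length, hence \dots\ an open soluble subgroup''. Pennington's theorem only says that in each finite quotient $\bar V$ the group $\bar V^{(a+b)}$ is nilpotent; it gives no uniform bound on the nilpotency class, and therefore no uniform bound on the derived length of $\bar V$. A uniform bound is exactly Kegel's Conjecture~\ref{conj:KW}, which is open. So your route from ``two primes'' to ``open soluble subgroup'' does not go through. (Note also that the Kegel--Wielandt factorisation $V=(V\cap U_\xi)(V\cap U_{\xi'})$ has nothing to do with how many primes occur inside $U_\xi$; if your solubility argument worked at all, it would already contradict the hypotheses outright, not merely rule out a second prime.)

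The paper proceeds quite differently and avoids this trap. From Pennington one gets that $\overline{V^{(a+b)}}$ is pro-nilpotent. One then argues \emph{against} solubility: by \cite[Theorem~2.2]{Willis07} the topologically simple, compactly generated group $G^+$ has no open soluble subgroup, so $V^{(a+b)}\neq 1$; hence the pro-nilpotent group $\overline{V^{(a+b)}}$ has a non-trivial $p$-Sylow for some prime $p$, i.e.\ $O_p(V)\neq 1$. A result from \cite{BEW} forces $O_p(V)$ to be infinite (no finite subgroup with open normaliser in a compactly generated simple group), and a short propagation argument then gives $O_p(W)\neq 1$ for \emph{every} compact open $W\leq G^+$, in particular for every vertex stabiliser $G^+_x$. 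The radius-$2$ statement is not obtained by estimating $\psi$ at all: it comes from the local structure result \cite[Proposition~2.1.2(2)]{BurgerMozes:trees1}, which says that once $O_p(G^+_x)\neq 1$, the pointwise stabiliser of every $2$-ball around a neighbour of $x$ is pro-$p$. Your attempt to force $\psi(n)\leq 2$ has no basis; the Burger--Mozes input is what is doing the work here, and you are missing it.
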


\begin{proof}
By Lemma~\ref{lem:Hua:NilpotentRootGps}, we find a compact open subgroup  $V < G$ which is a product of two nilpotent groups $A$ and $B$ of classes $a$ and $b$, respectively. Moreover $A$ and $B$ are compact subgroups of root groups. By~\cite{Pennington}, it follows that $\overline{V^{(a+b)}}$ is a pro-nilpotent normal subgroup of $V$. 

Notice that by Lemma~\ref{lem:Hua:NilpotentRootGps} the little projective group $G^+$ is open in $G$. Since the root groups are nilpotent, it follows from Proposition~\ref{prop:LittleProjSimple} that $G^+$ is abstractly simple. Moreover $G^+$ has finite index in $G$ by Proposition~\ref{prop:LittleProjMonolith}. 

Since $[G:G^+]$ is finite and $G^+$ is itself boundary-Moufang, the compact open subgroup $V < G$ provided by Lemma~\ref{lem:Hua:NilpotentRootGps} can in fact be chosen in $G^+$. Since $G^+$ has no compact open soluble subgroup by  \cite[Theorem~2.2]{Willis07}, it follows that $V^{a+b}$ is non-trivial, and hence there is some prime $p$ such that the maximal normal pro-$p$ subgroup $O_p(V)$ is non-trivial. By \cite[Theorem~4.8]{BEW}, a compactly generated simple locally compact group cannot have any non-trivial finite subgroup with an open normaliser. Thus $O_p(V)$ is infinite. It follows that $O_p(V\cap W)$ is non-trivial for each compact open subgroup $W$ because $[V : V \cap W]$ is finite. Since $V \cap W$ contains an open subgroup $W'$ which is normal in $W$, we deduce that $O_p(W') \leq O_p(W)$ is non-trivial. Thus every compact open subgroup $W$ of $G^+$  has $O_p(W)$ non-trivial.

This applies in particular to all vertex stabilisers. The desired conclusion now follows from \cite[Proposition~2.1.2(2)]{BurgerMozes:trees1}, which ensures that if $x \in T$ is such that $O_p(G^+_x)$ is non-trivial, then the pointwise stabiliser of the ball of radius~$2$ around any neighbour of $x$ is a pro-$p$ group (the statement of Proposition~2.1.2(2) in~\cite{BurgerMozes:trees1} is formally slightly weaker than this, but the proof indeed yields the above assertion).  
Notice that {\em loc.\@ cit.\@} assumes the group $G^+$ to be locally primitive, which is indeed the case because it acts
doubly transitively on the boundary $\bd T$; see also \cite[Lemma 3.1.1]{BurgerMozes:trees1}.
%
\end{proof}

\begin{remark}
In view of Lemma~\ref{lem:RootGroups:contraction}, root groups are subjected to  structural restrictions established by Gl\"ockner--Willis \cite{GloecknerWillis} (see  Theorem~\ref{thm:GlocknerWillis}). When $G$ is locally pro-$p$ as in the conclusions of Proposition~\ref{prop:VirtCycl}, it follows that root groups are direct products of a $p$-adic analytic nilpotent group and a torsion $p$-group of bounded exponent. We shall see in Corollary~\ref{cor:padic:Hua} below that, under the hypotheses of Proposition~\ref{prop:VirtCycl},  the torsion part must  be non-trivial. 
\end{remark}

\subsection{Fixed trees and sub-Moufang sets}\label{sec:subMoufang}

A $\tau$-invariant subgroup of $U$ in a Moufang set $\MM(U, \tau)$ is called a \textbf{root subgroup} and gives rise to a \textbf{sub-Moufang set}, see  \cite[Section~6.2]{DeMedtsSegev:course}. In particular, it is shown in  Lemma~6.2.3 from \emph{loc.~cit.} that for any element $h $ in the Hua subgroup $H$, the centraliser  $\centra_U(h)$ is a root subgroup as soon as it is non-trivial. The argument given shows in fact that this holds for the centraliser $\centra_U(K)$ of any subgroup $K \leq H$. 

In the present context, we will need the following version of the latter fact. 

\begin{lem}\label{lem:SubMoufangSet}
Let $G \leq \Aut(T)$ be boundary-Moufang and let $K \leq G$ be a compact subgroup fixing two distinct points $0, \infty \in \bd T$. 

If $K$ fixes a third point in $\bd T$, then the centraliser $G^K = \centra_G(K)$ preserves a unique minimal sub-tree $X$ contained in   the fixed tree $T^K$, and the $G^K$-action on $X$ is boundary-Moufang. Moreover $T^K$ is contained in a bounded neighbourhood of $X$. 
\end{lem}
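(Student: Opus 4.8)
The plan is to analyze the fixed tree $T^K$ of the compact subgroup $K$ and show that it carries the structure of a boundary-Moufang set for the centraliser $G^K = \centra_G(K)$. I would proceed as follows.

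\textbf{Step 1: The fixed set $T^K$ is a subtree with large boundary.} Since $K$ is a group of automorphisms of $T$, its fixed-point set $T^K$ is a subtree (possibly after passing to the first barycentric subdivision, or one allows $T^K$ to contain midpoints of edges; I would suppress this technicality). Because $K$ fixes $0$ and $\infty$, the line $\ell$ joining them lies in $T^K$, so $T^K$ is unbounded and $\bd T^K$ contains $\{0,\infty\}$. The hypothesis that $K$ fixes a third boundary point $\eta$ means $\bd T^K$ contains $\{0,\infty,\eta\}$, hence $\bd T^K$ has at least three points; being the boundary of a subtree, it is then infinite (indeed a Cantor set, unless $T^K$ has pathological valency-one vertices, which one rules out using that $K$ is compact and $G$ acts cocompactly). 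Here is where I would use the root-group structure: via the Moufang condition, for any two distinct boundary points there are root-group elements, and more importantly the fixed tree of a compact group fixing three ends must have infinitely many ends because root groups act transitively on complements of points; a more hands-on argument is that if $\eta \notin \{0,\infty\}$ then the convex hull of $0,\infty,\eta$ is a tripod inside $T^K$, and then one propagates using elements of $\centra_G(K)$.

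\textbf{Step 2: Define $X$ as the minimal $G^K$-invariant subtree of $T^K$.} Since $\bd T^K$ is infinite and $G^K$ acts on $T^K$, and $G^K$ contains hyperbolic elements (because it acts cocompactly — see below — with infinite boundary), the union of all axes of hyperbolic elements of $G^K$, or equivalently the convex hull in $T^K$ of the limit set of $G^K$, is a $G^K$-invariant subtree $X$; it is minimal because $G^K$ acts minimally on its own limit set once that limit set has more than two points. Uniqueness of $X$ follows from minimality: any two minimal invariant subtrees of a tree either coincide or are disjoint and parallel, but two disjoint parallel subtrees would force $G^K$ to fix a common end, contradicting that the limit set is all of $\bd X$ with $|\bd X|\geq 3$.

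\textbf{Step 3: $T^K$ lies in a bounded neighbourhood of $X$.} This is the step I expect to be the main obstacle. The idea is that $K$ is compact, so there is a uniform bound $N$ (depending on $K$) on how far a $K$-fixed point can be from the "core": using Lemma~\ref{lem:psi(n)} and the fact (Lemma~\ref{lem:RootGroups:contraction}) that root groups sit inside contraction groups, one shows that if a point $x \in T^K$ were at distance $>N$ from $X$, then the branch of $T^K$ hanging off $X$ at the projection of $x$ would be fixed by $K$ but would contribute an end to $\bd T^K = \bd X$ not in the limit set, a contradiction once $N$ is chosen so that $K$ fixes the $N$-ball around every vertex of $\ell$ sufficiently deep. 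Concretely, pick $n$ with $K \le H_v^{(n)} \cdot(\text{stuff})$ compact; the uniform contraction Lemma~\ref{lem:UnifContraction} applied to a hyperbolic $\alpha \in G^K$ translating along $\ell$ bounds the size of the "hair" of $T^K$ by some $\psi(n)$. The delicate part is making this bound genuinely uniform over all of $T^K$ and not just along $\ell$; I would handle it by transporting via $G^K$, using that $G^K$ acts cocompactly on $X$ (which in turn follows once we know $X$ is the convex hull of the limit set of a group with infinite limit set acting with a hyperbolic element, together with Proposition~\ref{prop:2trans}-type arguments internal to $\bd X$).

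\textbf{Step 4: $G^K$ acts boundary-Moufang on $X$.} Finally, for each $\xi \in \bd X = \bd T^K$ one defines $V_\xi = \centra_{U_\xi}(K) = U_\xi \cap G^K$ (the root group of $\MM$ intersected with the centraliser), and invokes the sub-Moufang-set machinery of \cite[Lemma~6.2.3]{DeMedtsSegev:course} (in the strengthened form quoted just before the Lemma, for centralisers of subgroups rather than single elements) to see that $(\bd X, (V_\xi))$ is a Moufang set: $V_\xi$ fixes $\xi$ and acts sharply transitively on $\bd X \setminus \{\xi\}$, and $\{V_\xi\}$ is a conjugacy class under $G^{K,+} = \la V_\xi\ra$. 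One checks that $V_\xi$ is closed in $G^K$ (it is an intersection of closed subgroups) and that $G^K$ is closed in $\Aut(X)$ (image of a closed subgroup under the restriction map, using properness), so the boundary-Moufang definition is met. Transitivity of $V_\xi$ on $\bd X\setminus\{\xi\}$ is exactly the statement that the sub-root-group is "full", which is the content of the cited lemma once $\centra_U(K)\ne 1$; non-triviality of $\centra_U(K)$ is guaranteed because $\bd X$ has a point other than $0,\infty$, forcing a non-trivial $K$-fixed element in a suitable root group. This completes the four assertions of the Lemma.
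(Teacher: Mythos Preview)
Your outline has the right ingredients but the logical order is inverted, and the single most important computation is missing.

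\textbf{The missing key step.} The heart of the paper's proof is a two-line commutator argument you never carry out: for any $\xi \in \bd T^K \setminus \{0,\infty\}$, take the unique $u \in U_\infty$ with $u.0 = \xi$; then for each $k \in K$ the commutator $[k,u]$ lies in $U_\infty$ (since $K \leq G_\infty$ normalises $U_\infty$) and fixes $\xi$ (direct check), hence $[k,u]=1$ by sharp transitivity. This shows $\centra_{U_\infty}(K)$ is sharply transitive on $\bd T^K \setminus \{\infty\}$, which simultaneously (a) identifies $\bd T^K$ with the underlying set of the abstract sub-Moufang set you cite, and (b) proves $G^K$ is transitive on $\bd T^K$, whence $\bd X = \bd T^K$. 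Your Step~4 outsources this to \cite[Lemma~6.2.3]{DeMedtsSegev:course}, but that lemma only tells you $\centra_U(K)$ is a root subgroup of an abstract sub-Moufang set with underlying set $\centra_U(K).0 \cup \{\infty\}$; it does \emph{not} by itself tell you this set equals $\bd T^K$. That identification is precisely the commutator computation.

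\textbf{Step 3 is in the wrong place and uses the wrong tool.} Your bounded-neighbourhood argument presupposes $\bd T^K = \bd X$, which you only (partially) address in Step~4. Once the commutator step gives $\bd X = \bd T^K$ and hence cocompactness of $G^K$ on $X$, the paper's argument for bounded neighbourhood is a clean limit argument: if $(v_n) \subset T^K$ had $d(v_n, X) \to \infty$, use cocompactness to translate the projections $x_n$ into a bounded set, extract, and obtain a geodesic ray in $T^K$ whose endpoint is not in $\bd X$ --- contradicting $\bd X = \bd T^K$. Your proposed route via Lemma~\ref{lem:psi(n)} is not well-suited: that lemma controls fixed sets of \emph{root-group} elements $U_{\infty,v}$, not of compact subgroups $K$ of the Hua group, so there is no direct way to read off a uniform bound on the hair of $T^K$ from~$\psi$.
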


\begin{proof}
We first observe that $T^K$ is non-empty since $K$ is compact.
Since $K$ fixes two distinct points of the boundary $\bd T$,
this implies in particular that $(\bd T)^K = \bd (T^K)$, so that the notation $\bd T^K$ is unambiguous. 

Let $\xi \in \bd T^K$ be any point distinct from $0, \infty$. Let $u \in U_\infty$ be the unique element such that $\xi = u.0$. Then  for any $k \in K$, the commutator $[k, u]= k u k\inv u\inv$ fixes $\xi$. Moreover $[k, u]$ belongs to $U_\infty$ since $K$ normalises $U_\infty$. Thus $[k, u]= 1$ and $u \in \centra_{U_\infty}(K)$.

This shows that the group $U_\infty^K =   \centra_{U_\infty}(K)$ acts sharply transitively on $\bd T^K \setminus \{\infty\}$. Similarly one shows that $U_0^K =   \centra_{U_0}(K)$ acts sharply transitively on $\bd T^K \setminus \{0\}$. This implies that  the pair $(\bd T^K, G^K)$ is a Moufang set. Let $X$ denote the unique minimal $G^K$-invariant sub-tree of $T^K$. Then $\bd X \subseteq \bd T^K$, and since $G^K$   is transitive on $\bd T^K$, it follows that $\bd X = \bd T^K$. By construction $G^K$ and $U_0^K$ are closed. Moreover, the image of $G^K$ in $\Aut(X)$ is closed since $G^K$ acts properly on $X$. It follows that the $G^K$-action on the tree $X$ satisfies the  boundary-Moufang condition. In particular the $G^K$-action on $X$ is cocompact by \cite[Lemma 3.1.1]{BurgerMozes:trees1}. 

It remains to see that $X$ is contained in a bounded neighbourhood of $X$. If this were not the case, then we would find a sequence of vertices $(v_n)$ in $T^K$ whose distance to $X$ tends to infinity. Let $x_n \in X$ denote the projection of $v_n$ to $X$. Since $G^K$ acts cocompactly on $X$, we can find $g_n \in G^K$ such that $x'_n = g_n.x_n$ is bounded. Since $G^K$ preserves $T^K$, we have $v'_n = g_n.v_n \in T^K$. Upon extracting we may assume that $(x'_n)$ is constant. It follows that the sequence of geodesic segments $[x'_0, v'_n]$ subconverges to a geodesic ray, contained in $T^K$, whose endpoint does not belong to $\bd X$ since $x'_0$ is the projection of $v'_n$ to $X$ for all $n$. This contradicts the fact that $\bd X = \bd T^K$.
\end{proof}

It is a general (and easy) fact that the centraliser $\centra_H(U)$ of the root group $U$ in the Hua subgroup $H$ is trivial in any Moufang set $\MM(U, \tau)$. Specialising Lemma~\ref{lem:SubMoufangSet} to the case of compact open subgroups of the Hua subgroup, we get the following related result. 

\begin{lem}\label{lem:CentraliserHua}
Let $G \leq \Aut(T)$ be boundary-Moufang, let $0, \infty \in \bd T$ be distinct boundary point and let $K$ be a compact open subgroup of the Hua subgroup  $G_{0, \infty}$. 

If $\centra_{U_\infty}(K)$ is non-trivial, then $G^K = \centra_G(K)$ preserves a unique minimal sub-tree $X$ contained in   the fixed tree $T^K$, and the $G^K$-action on $X$ is boundary-Moufang with root group $\centra_{U_\infty}(K)$   and with virtually cyclic Hua subgroup.
In particular, if the root groups are nilpotent and if Conjecture~\ref{conj:KW} holds, then  $\centra_{U_\infty}(K)=1$.
\end{lem}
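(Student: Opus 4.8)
The plan is to feed the subgroup $K$ into Lemma~\ref{lem:SubMoufangSet} and then to control the Hua subgroup of the resulting sub-Moufang set on $X$. First I would check that $K$ fixes a third point of $\bd T$: picking a non-trivial $u\in\centra_{U_\infty}(K)$ and setting $\xi=u.0$, we have $\xi\notin\{0,\infty\}$, and since $u$ commutes with $K$ and $K$ fixes $0$, every $k\in K$ satisfies $k.\xi=ku.0=uk.0=\xi$; hence $K$ fixes $0,\infty,\xi$. Lemma~\ref{lem:SubMoufangSet} then produces the minimal $G^K$-invariant subtree $X\se T^K$, with $\bd X=\bd T^K=(\bd T)^K$, on which (the image of) $G^K$ acts boundary-Moufang, $T^K$ lying in a bounded neighbourhood of $X$. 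The proof of that lemma shows $\centra_{U_\infty}(K)$ is sharply transitive on $\bd X\setminus\{\infty\}$; since a non-trivial element of $U_\infty$ moves $0\in\bd X$ and therefore cannot fix $X$ pointwise, the root group of $G^K\curvearrowright X$ at $\infty$ is isomorphic to $\centra_{U_\infty}(K)$.

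Next I would identify the Hua subgroup of this new Moufang set as the image in $\Aut(X)$ of $\centra_G(K)\cap G_{0,\infty}=\centra_H(K)$, where $H=G_{0,\infty}$. Applying Lemma~\ref{lem:Hua} to the boundary-Moufang action $G^K\curvearrowright X$, this image decomposes as $C_c\rtimes\la t\ra$, where $t$ is a translation along the geodesic $\ell$ joining $0$ to $\infty$ (note $\ell\se X$) and $C_c$ is the subgroup acting trivially on $\ell$; hence it suffices to show that $C_c$ is finite. Unwinding definitions, $C_c$ is exactly the image in $\Aut(X)$ of $\centra_H(K)\cap\Fix_G(\ell)=\centra_{H_c}(K)$, where $H_c=\Fix_G(\ell)$ is the compact part of $H$ provided by Lemma~\ref{lem:Hua}: indeed an element of the new Hua subgroup acts trivially on $\ell\se X$ iff its chosen lift in $\centra_H(K)$ does, i.e.\ lies in $\centra_{H_c}(K)$.

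It remains to bound $C_c$, and this is where the hypotheses enter. Since $K$ is a compact open subgroup of $H=H_c\rtimes\la t\ra$ and $\la t\ra$ is discrete, $K\le H_c$ and $[H_c:K]<\infty$; as $\centra_{H_c}(K)$ normalises $K$, it follows that $Z(K)=\centra_{H_c}(K)\cap K$ has finite index in $\centra_{H_c}(K)$. On the other hand $Z(K)\le K$ fixes $T^K$ pointwise by definition of the fixed tree, hence fixes $X\se T^K$ pointwise, so $Z(K)$ has trivial image in $\Aut(X)$. Therefore $C_c$, being the image of $\centra_{H_c}(K)$, is a quotient of the finite group $\centra_{H_c}(K)/Z(K)$, hence finite, and the Hua subgroup of $G^K\curvearrowright X$ is virtually cyclic. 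Finally, if the root groups of $G$ are nilpotent then so is $\centra_{U_\infty}(K)\le U_\infty$; were it non-trivial, $G^K\curvearrowright X$ would be a boundary-Moufang action with non-trivial nilpotent root groups, and assuming Conjecture~\ref{conj:KW}, Corollary~\ref{cor:Hua:NilpotentRootGps} would force its Hua subgroup not to be virtually cyclic --- contradicting the previous sentence. Hence $\centra_{U_\infty}(K)=1$. I expect the main obstacle to be precisely the virtually-cyclic step: recognising that the potentially large compact group $\centra_{H_c}(K)$ collapses in $\Aut(X)$ because its finite-index subgroup $Z(K)$, lying in $K$, already fixes all of $T^K$, so that openness of $K$ is exactly what makes the new Hua subgroup virtually cyclic.
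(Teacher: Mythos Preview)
Your proof is correct and follows the same approach as the paper's: apply Lemma~\ref{lem:SubMoufangSet}, observe that $K$ acts trivially on $T^K$ so that the new Hua subgroup collapses to a virtually cyclic group, and conclude via Corollary~\ref{cor:Hua:NilpotentRootGps}. The paper compresses the virtually-cyclic step into a single sentence (passing to $G^K/(K\cap G^K)$ and invoking Lemma~\ref{lem:Hua}), whereas you spell out explicitly why the compact part $C_c$ is finite via the finite-index inclusion $Z(K)\le\centra_{H_c}(K)$; this is precisely the content behind the paper's shortcut, so the arguments are essentially identical.
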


\begin{proof}
If $\centra_{U_\infty}(K)=1$  is not trivial, then $K$ fixes at least three points in $\bd T$ and Lemma~\ref{lem:SubMoufangSet} shows that $G^K = \centra_G(K)$ acts as a boundary-Moufang group on $T^K$.  Since $K$ acts trivially on $T^K$, the boundary-Moufang group $G^K /K \cap G^K$ acts on $T^K$, and by Lemma~\ref{lem:Hua}, the Hua subgroup is virtually cyclic. In the case that the root groups are nilpotent and Conjecture~\ref{conj:KW} holds, this contradicts Corollary~\ref{cor:Hua:NilpotentRootGps}. 
\end{proof}

\section{Abelian root groups and $\SL_2$}

\subsection{General Moufang sets with abelian root groups}

A major conjecture in the theory of Moufang sets asserts that every proper Moufang set with abelian root groups comes from a quadratic Jordan division algebra, see \cite[\S7.6]{DeMedtsSegev:course}. A number of results have already been obtained in this direction, some of which are collected in the following.

\begin{thm}\label{thm:recap:MoufangSet}
Let $\MM(U, \tau)$ be a proper Moufang set with abelian root groups (written additively), and let $H \leq G^+$ be its Hua subgroup.
Then the following assertions hold. 
\begin{enumerate}[\rm (i)]
\item  $\MM(U, \tau)$ is special, \emph{i.e.} we have $\tau(-u) = -\tau(u)$ for all $u \in U$.  

\item $U$ is either torsion-free and divisible, or of exponent $p$ for some prime $p$. 

\item If $U$ is not of exponent $2$, then $U$ does not have any non-trivial $H$-invariant subgroups. 
\end{enumerate}
\end{thm}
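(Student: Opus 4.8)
The plan is to handle the three assertions about $\MM(U,\tau)$ separately. Parts (i) and (iii) are deep inputs from the general theory of Moufang sets with abelian root groups, which I would quote rather than reprove; part (ii) is then a short and entirely self-contained consequence of (iii), and is the only step I would carry out in full.

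For part (i), I would invoke the theorem of Segev asserting that \emph{every} proper Moufang set with abelian root groups is special, as recorded in \cite{DeMedtsSegev:course}. This is by far the hardest ingredient, and it is the main obstacle in the whole statement: the identity $\tau(-u) = -\tau(u)$ cannot be extracted from the formal manipulations of $\mu$-maps and Hua maps recalled around Proposition~\ref{prop:mu}, and rests on the full strength of the structure theory. I would make no attempt to reprove it.

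For part (iii), I would build on the specialness established in (i). The starting observation is that $U = U_\infty$ is abelian and that the Hua subgroup $H = G^+_{0,\infty}$ normalises $U_\infty$, hence acts on $U$ by group automorphisms, so that $H$-invariant subgroups are meaningful substructures. I would then cite the structure theory of proper special Moufang sets with abelian root groups from \cite{DeMedtsWeiss06} and \cite{DeMedtsSegev:course}, which shows that, once exponent $2$ is excluded, this Hua action admits no proper non-trivial invariant subgroup. The exclusion of exponent $2$ is genuine and parallels the hypothesis $\chr(k) \neq 2$ in Example~\ref{ex:skewherm}: in characteristic $2$ the associated forms degenerate and proper invariant subgroups reappear.

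Part (ii) is then purely formal, and is the step I would actually write out. If $U$ has exponent $2$ there is nothing to prove, since this is the allowed case with $p = 2$; so assume $U$ is not of exponent $2$, and apply (iii), so that the only $H$-invariant subgroups of $U$ are $0$ and $U$. Since $H$ acts by automorphisms, the torsion subgroup $T(U)$, the images $pU$, and the kernels $U[p] = \{u \in U \mid pu = 0\}$ are all characteristic in $U$, hence $H$-invariant. If $T(U) = U$, then $U$ is a non-trivial torsion group, so $U[p] \neq 0$ for some prime $p$; invariance forces $U[p] = U$, i.e.\@ $U$ has exponent $p$. If instead $T(U) = 0$, then $U$ is torsion-free, so $pU \neq 0$ for every prime $p$ (as $U \neq 0$), and invariance forces $pU = U$; thus $U$ is divisible, and being torsion-free it is torsion-free and divisible. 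This yields the stated dichotomy. The only point requiring care is that (iii) carries the standing hypothesis ``not of exponent $2$'', so that case must be separated off before (iii) is invoked.
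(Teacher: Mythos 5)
Your proposal is correct, and for parts (i) and (iii) it does what the paper does, namely quote the literature: the paper's entire proof is a citation of Segev \cite{Segev09} for (i), Segev--Weiss \cite{SegevWeiss08} for (iii), and Timmesfeld \cite[I.5.2(a)]{Timmesfeld} (or \cite[Proposition 7.2.2(5)]{DeMedtsSegev:course}) for (ii). Two remarks. First, your attribution of (iii) to \cite{DeMedtsWeiss06} is off; that paper concerns the Jordan-algebra structure coming from the Hua maps, while the irreducibility of the $H$-action on $U$ (outside exponent $2$) is the main theorem of \cite{SegevWeiss08}. Second, and more substantively, your treatment of (ii) genuinely differs from the paper's: you derive the dichotomy ``torsion-free divisible or exponent $p$'' from (iii) by observing that $T(U)$, $pU$ and $U[p]$ are characteristic hence $H$-invariant, whereas the paper cites an independent, direct proof of (ii) that uses only specialness and $\mu$-map identities. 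Your deduction is logically clean (including the necessary peeling off of the exponent-$2$ case before invoking (iii)), but it carries a circularity risk at the level of the sources: the published proof of the Segev--Weiss irreducibility theorem takes the structure statement (ii) as a known preliminary input, so if one unwinds the citations your argument for (ii) would rest on (ii) itself. The safe course, and the one the paper takes, is to quote (ii) independently; your derivation is then best viewed as showing that (iii) subsumes (ii) outside exponent $2$, which is a nice observation but not a replacement for the direct proof.
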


\begin{proof}
(i) is the main result from \cite{Segev09} while (iii) is the main result from \cite{SegevWeiss08}.
For (ii) we refer to \cite[I.5.2(a)]{Timmesfeld}; see also \cite[Proposition 7.2.2(5)]{DeMedtsSegev:course}.
\end{proof}

\subsection{Abelian Hua subgroup}

In the special case that not only the root groups are abelian, but also the Hua subgroup, one has a complete classification of all possible Moufang sets. Since all such Moufang sets are special, as mentioned above, this classification follows from  \cite[Theorem~6.1]{DeMedtsWeiss06} and the main result from \cite{Grueninger}. This classification asserts that every such Moufang set is the projective line over a field, except in characteristic~$2$ where well-understood exceptions arise from imperfect fields $K$ such that $[K:K^2] > 2$, where $K^2$ is the subfield consisting of the squares; see \cite{Grueninger}. 

It turns out that for Moufang sets with abelian root groups, the condition that the Hua group is abelian is equivalent to the condition that the point-stabilisers are metabelian: 

\begin{prop}\label{prop:Habelian}
	Let $\MM(U, \tau)$ be a Moufang set with $U$ abelian.
	Then the Hua subgroup $H$ is abelian if and only if the point stabilizer $G_\infty$ is metabelian.
\end{prop}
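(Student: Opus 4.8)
\textbf{Proof plan for Proposition~\ref{prop:Habelian}.}
The plan is to unwind the semidirect product structure $G_\infty = U \rtimes H$, which holds for any Moufang set (the point stabiliser $G_\infty$ splits over $U_\infty = U$, with complement $H = G_{0,\infty}$ the Hua subgroup, since $H$ normalises $U_\infty$ and $U_\infty \cap H = 1$). Given this, the commutator subgroup $[G_\infty, G_\infty]$ can be analysed componentwise: since $U$ is abelian by hypothesis, $[G_\infty, G_\infty]$ is generated by $[H, H]$ together with the elements $[h, u] = u^{-h} u$ for $h \in H$, $u \in U$, and these latter elements all lie in $U$ because $U$ is normal. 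The point is then that $[G_\infty, G_\infty]$ is abelian if and only if two things hold simultaneously: $[H, H]$ is abelian, and $[H,H]$ acts trivially by conjugation on the subgroup of $U$ generated by all the $[h,u]$.

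First I would record the easy direction: if $H$ is abelian then $[G_\infty, G_\infty] \le U$ (since $[H,H] = 1$), and as $U$ is abelian, $[G_\infty, G_\infty]$ is abelian, i.e. $G_\infty$ is metabelian. For the converse, suppose $G_\infty$ is metabelian; I want to deduce $[H,H] = 1$. Since $[H,H] \le [G_\infty, G_\infty]$ and the latter is abelian, $[H,H]$ is abelian; but more is true — $[H,H]$ must centralise $U \cap [G_\infty,G_\infty]$, and in particular it must centralise $\centra_U([H,H])^{\perp}$-type submodules. The key leverage is Theorem~\ref{thm:recap:MoufangSet}: if the Moufang set is proper, then $H \ne 1$, and by part~(iii), if $U$ is not of exponent $2$ then $U$ has no non-trivial $H$-invariant subgroup. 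So the submodule of $U$ on which $[H,H]$ acts — which is $H$-invariant since $[H,H] \trianglelefteq H$ — is forced to be trivial or all of $U$; combined with the abelianness constraint coming from metabelianness of $G_\infty$, this should pin down $[H,H] = 1$.

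More carefully, here is the route I would take for the converse. If the Moufang set is improper, then $H = 1$ is trivially abelian and there is nothing to prove, so assume it is proper. Consider $C = \centra_U([H,H])$, the fixed points in $U$ of the conjugation action of $[H,H]$; since $[H,H]$ is normal in $H$, $C$ is an $H$-invariant subgroup of $U$. Metabelianness of $G_\infty$ says $[G_\infty, G_\infty]$ is abelian, and since $[H,H] \le [G_\infty,G_\infty]$ and for every $u \in U$ we have $[h',[h_1,h_2]]\cdot(\text{stuff in }U) $-type identities forcing $[G_\infty,G_\infty] \cap U$ to be centralised by $[H,H]$, one gets $[G_\infty, G_\infty] \cap U \le C$. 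Now I claim $[G_\infty,G_\infty]\cap U$ is itself $H$-invariant and in fact contains every $[h,u]$, hence the quotient $U/([G_\infty,G_\infty]\cap U)$ is the image of $U$ in the abelianisation of $G_\infty$ on which $H$ acts; chasing this, either $C = U$, meaning $[H,H]$ acts trivially on all of $U$, or $U$ has exponent $2$. In the first case, $[H,H]$ centralising $U$ forces $[H,H] = 1$ because $\centra_H(U) = 1$ in any Moufang set $\MM(U,\tau)$ (the fact recalled just before Lemma~\ref{lem:CentraliserHua}). The exponent-$2$ case must be handled separately — there one invokes the explicit classification in characteristic $2$, or argues directly that even there metabelianness of $G_\infty$ together with $\centra_H(U)=1$ yields $[H,H]=1$.

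The main obstacle I anticipate is exactly that characteristic-$2$ / exponent-$2$ case, where part~(iii) of Theorem~\ref{thm:recap:MoufangSet} gives no information and genuine imperfect-field exceptions to the classification exist (the $[K:K^2] > 2$ examples referenced via \cite{Grueninger}); one has to check that in \emph{those} examples the Hua group, although the Moufang set is exotic, is still abelian precisely when $G_\infty$ is metabelian, presumably by a direct computation with the field $K$ and its subfield $K^2$. The rest is a routine but careful commutator calculation inside $U \rtimes H$ using only that $U$ is abelian and $\centra_H(U) = 1$.
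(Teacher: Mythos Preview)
Your setup and the forward direction are fine, and your argument for the converse does work when $U$ is not of exponent~$2$: the subgroup $C = \centra_U([H,H])$ is $H$-invariant and nonzero (since it contains the nonzero group $[U,H] \leq [G_\infty,G_\infty]\cap U$), so Theorem~\ref{thm:recap:MoufangSet}(iii) forces $C = U$, whence $[H,H] \leq \centra_H(U) = 1$.

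The gap is the exponent-$2$ case. Your proposed fix via ``the explicit classification in characteristic~$2$'' does not work as stated: the result from \cite{Grueninger} that you have in mind classifies special Moufang sets with abelian root groups \emph{and abelian Hua subgroup}, so invoking it to deduce that $H$ is abelian is circular. There is no classification of proper Moufang sets with $U$ abelian of exponent~$2$ to substitute here, and Segev--Weiss genuinely gives no information in that case, so your sketch does not close the argument.

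The paper's proof avoids the case distinction altogether by using a different input: \cite[Theorem~1.12]{DeMedtsSegevTent} gives $[U_\infty, H] = U_\infty$ for any special Moufang set with $|U| > 3$, in every characteristic. This yields $U_\infty \leq [G_\infty, G_\infty]$ immediately; since $[G_\infty, G_\infty]$ is abelian it lies in $\centra_{G_\infty}(U_\infty) = U_\infty$, so $[G_\infty, G_\infty] = U_\infty$ and $H \cong G_\infty/[G_\infty,G_\infty]$ is abelian. (The cases $|U| \leq 3$ are the projective lines over $\FF_2$ and $\FF_3$.) So the missing ingredient is precisely the De~Medts--Segev--Tent identity $[U,H] = U$, which replaces your appeal to Theorem~\ref{thm:recap:MoufangSet}(iii) and handles exponent~$2$ uniformly.
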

\begin{proof}
	If $\MM(U, \tau)$ is not proper, then $H = 1$ and $G_\infty = U_\infty$, in which case the statement is trivially true.
	So assume that $\MM(U, \tau)$ is proper;
	by the main result of \cite{Segev09}, the Moufang set is special.
	If $H$ is abelian, then $G_\infty = U_\infty \rtimes H$ is indeed metabelian.

	Assume now that $G_\infty$ is metabelian; we will show that $H$ is abelian.
	If $|U| \leq 3$, then $\MM(U, \tau)$ is the projective line over the field with $2$ or $3$ elements,
	so in this case $H$ is indeed abelian.
	So assume that $|U| > 3$; then we can invoke \cite[Theorem 1.12]{DeMedtsSegevTent}, and hence $[U_\infty, H] = U_\infty$;
	in particular, $U_\infty \leq [G_\infty, G_\infty]$.
	Since $[G_\infty, G_\infty]$ is abelian, this implies that $[G_\infty, G_\infty] \leq \centra_{G_{\infty}}(U_\infty)$.
	On the other and, $G_\infty = U_\infty \rtimes H$, but no element of $H$ centralizes $U_\infty$;
	it follows that $\centra_{G_{\infty}}(U_\infty) \leq U_\infty$.
	We conclude that $U_\infty = [G_\infty, G_\infty]$.
	Therefore $H \cong G_\infty / U_\infty \cong G_\infty / [G_\infty, G_\infty]$ is abelian.
\end{proof}

In our setting, we shall show that all the fields which appear naturally are local fields; in particular it will follow that the aforementioned characteristic~$2$ exceptions do not arise. 

\begin{lem}\label{lem:AbelianHua}
Let $G \leq \Aut(T)$ be boundary-Moufang and assume that for some pair of distinct points $0, \infty \in \bd T$, the root group $U_\infty$ and the stabiliser $G_{0, \infty}$ are both abelian. 

Then the little projective group $G^+$ is closed, cocompact in $G$ and isomorphic as a topological group to $\PSL_2\big(k)$, where $k$ is a non-Archimedean local field. Moreover $G$ embeds as a closed subgroup in $\PGL_2(k)$ and $T$ is equivariantly isomorphic to the Bruhat--Tits tree of $\PGL_2\big(k \big)$.
\end{lem}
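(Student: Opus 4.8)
The idea is to identify the abstract Moufang set $\MM = (\bd T, (U_\xi))$ first, then to upgrade the abstract isomorphism to a topological and geometric one. By Theorem~\ref{thm:recap:MoufangSet}(i), since $U_\infty$ is abelian and the stabiliser $G_{0,\infty}$ contains the Hua subgroup $H = G^+_{0,\infty}$ which is therefore abelian, the Moufang set $\MM(U_\infty, \tau)$ is special; by Corollary~\ref{cor:sharply} it is moreover proper (the translation $t$ built in Lemma~\ref{lem:Hua} is a nontrivial element of $H$). Now invoke the classification of special Moufang sets with abelian root groups and abelian Hua subgroup (\cite[Theorem~6.1]{DeMedtsWeiss06} together with \cite{Grueninger}): $\MM$ is the projective line over a field, possibly with a characteristic~$2$ exception over an imperfect field $K$ with $[K:K^2]>2$. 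So abstractly $G^+ \cong \PSL_2(K)$ for some (commutative) field $K$, and the action on $\bd T$ is the action on $\mathbf{P}^1(K)$.

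**Excluding the exceptional case and finding the valuation.** The next step is to show $K$ carries a discrete valuation with finite residue field, which simultaneously rules out the characteristic~$2$ exceptions (since a local field is perfect in characteristic~$0$ and, in characteristic $p$, is $\FF_q(\!(t)\!)$ with $[K:K^p]=p$). Here I would use the tree. Fixing the vertex $v$ on $\ell$ as in Section~5 and the two fixed ends $0,\infty$, Lemma~\ref{lem:RootGroupElliptic} says each root group is an increasing union of compact open subgroups $U_{\infty,\rho(n)}$; translating by $t$ identifies this filtration with a $\ZZ$-indexed filtration of $U_\infty \cong (K,+)$ by compact open subgroups, each of finite index in the next. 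This is exactly the data of a discrete valuation ring $\sO \subset K$ with $[\sO:\mathfrak{m}]$ finite (the residue field is finite because the tree is locally finite and $U_{\infty,v}/U_{\infty,\rho(1)}$ embeds in the finite set of edges at $v$). Hence $K$ is a local field $k$ (complete after passing to $\overline{G^+}$, or one argues completeness directly from closedness of $U_\infty$ in $G$). In particular $k$ is non-Archimedean and, being a local field, has no imperfection issues, so the exceptional case is excluded.

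**From abstract to topological and geometric.** With $k$ identified, $\PSL_2(k)$ acts on its Bruhat--Tits tree $T_k$, which is the tree whose boundary is $\mathbf{P}^1(k)$ with the Moufang structure just produced. Since a boundary-transitive action of a closed group on a locally finite tree (without valency-one or valency-two vertices, after collapsing) is determined up to equivariant isometry and scaling by the Moufang set at infinity together with the filtration of the root groups — the vertices being recovered as the compact open subgroups in the Baumgartner--Willis/parabolic picture, or equivalently the half-trees being recovered from the filtration in Lemma~\ref{lem:psi(n)} — the comparison of the two filtrations shows $T$ is equivariantly isometric (up to scaling) to $T_k$ and $G^+ \cong \PSL_2(k)$ as topological groups. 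Closedness of $G^+$ then follows because its image in $\Aut(T)\cong\Aut(T_k)$ is $\PSL_2(k)$, which is closed; cocompactness in $G$ is Proposition~\ref{prop:LittleProjMonolith}. Finally, $G$ normalises $G^+ = \overline{G^+}$ and acts faithfully on $T$, hence embeds in the normaliser of $\PSL_2(k)$ in $\Aut(T_k)$, which is $\PGL_2(k)$; so $\PSL_2(k)\le G\le\PGL_2(k)$ as closed subgroups.

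**Main obstacle.** The delicate point is not the classification input but the passage from the abstract isomorphism $G^+\cong\PSL_2(K)$ to the identification of $K$ with a local field and of $T$ with the Bruhat--Tits tree: one must check that the tree-theoretic filtration of the root group genuinely coincides with the valuation filtration of $(K,+)$ — i.e.\ that the successive indices are uniformly bounded (local finiteness) and that the resulting valuation is discrete and makes $K$ complete — and then that this filtration pins down the tree uniquely up to scaling. Handling the characteristic~$2$ bookkeeping (confirming $[k:k^2]=2$ for a local field of characteristic $2$, so no imperfect-field exception survives) is a minor but necessary subtlety.
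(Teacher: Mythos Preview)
Your overall strategy is sound, but there is a genuine gap at the step you yourself flag as the ``main obstacle,'' and a second error at the end.

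\textbf{The valuation step.} A $\ZZ$-indexed filtration of $(K,+)$ by compact open subgroups $V_n = t^n V_0$ with finite successive quotients is \emph{not} ``exactly the data of a discrete valuation ring.'' A valuation requires multiplicative compatibility: at minimum $V_0 \cdot V_0 \subseteq V_0$. You have not shown this, and it does not follow from the additive picture alone. The natural route is to prove that the field multiplication on $K$ (defined via the Hua maps, as in \cite[Theorem~6.1]{DeMedtsWeiss06}) is \emph{continuous} for the topology inherited from $U_\infty$; then $K$ is a non-discrete locally compact totally disconnected topological field, hence a non-Archimedean local field by the classification of such fields. This is precisely what the paper does, and the key input you are missing is Proposition~\ref{prop:mu-map:continuous} (continuity of $u \mapsto \mu_u$), from which continuity of $u \mapsto h_u$ and hence of the multiplication follows immediately. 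In characteristic~$2$ the paper argues differently: it identifies $k$ as the subring of $\End(U)$ generated by $H$, uses Lemma~\ref{lem:PrimeExponent} to see that $k$ is a finite-dimensional $\FF_2(\!(t)\!)$-algebra (hence a local field), and only \emph{then} invokes \cite{Grueninger} to conclude $G^+ \cong \PSL_2(k)$. Your ordering---first assume $U_\infty \cong (K,+)$ with $K$ a field, then show $K$ is local, then rule out the exception---is circular, since in the exceptional case $U$ is not identified with the additive group of the relevant field.

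\textbf{The embedding into $\PGL_2(k)$.} The normaliser of $\PSL_2(k)$ in $\Aut(T_k)$ is not $\PGL_2(k)$: field automorphisms of $k$ act on the Bruhat--Tits tree and normalise $\PSL_2(k)$, so you only get $G \leq \mathrm{P\Gamma L}_2(k)$. To descend to $\PGL_2(k)$ you must use the hypothesis that $G_{0,\infty}$ is abelian a second time: a non-trivial field automorphism would contribute a non-central element to the stabiliser of a pair, contradicting commutativity.
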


\begin{proof}
By Theorem~\ref{thm:recap:MoufangSet}, the (abstract) group $U$ (written additively) is canonically endowed with the structure of a vector space over $F$, where $F=\QQ$ or $\FF_p$ for some prime $p$. 

\medskip
Assume first that $F \neq \FF_2$ and let $e \in U^*$ be some fixed non-zero element. Let $\tau = \mu_e$, and let
$h \colon U \to G_{0, \infty} \colon u \mapsto h_u$ be the map defined by $h_u = \mu_u \tau$ if $u \neq 0$, and $h_0 = 0$.
Now \cite[Theorem~6.1]{DeMedtsWeiss06} ensures that the map 
\[ U \times U \to U \colon (x,y) \to \frac 1 2 (h_{e+x} - h_e - h_x)(y) \]
is a multiplication that turns $U$ into a field $k$. Moreover the little projective group $G^+$ is (abstractly) isomorphic to $\PSL_2(k)$ and the Moufang set $\partial T$ is equivariantly isomorphic to the projective line over $k$. Recalling now that $U$ is a locally compact additive group, and that the map $h \colon u \mapsto h_u$ is continuous by Proposition~\ref{prop:mu-map:continuous}, it follows that $k$ is in fact a non-discrete locally compact topological field. Since $k$ is totally disconnected, it follows  that $k$ is non-Archimedean. 

Since every continuous homomorphism from $\PSL_2(k)$ to a locally compact group $G$ is proper (see \cite[Lemma~5.3]{BM96}), it follows that $G^+$ is closed in $G$. Since $G^+$ acts $2$-transitively on $\bd T$, it is edge-transitive on $T$ and hence cocompact in $G$. 
Since $\Aut(\PSL_2(k)) = \mathrm P\Gamma \mathrm L_2(k)$ and since $\centra_{\Aut(T)}(G^+)=1$, it follows that $G$ can be identified with a subgroup of $\mathrm P \Gamma \mathrm L_2(k)$ . Since the stabiliser of a pair is abelian in $G$, no non-trivial field automorphism is involved in $G$ and, hence, we have $G \leq \PGL_2(k) \leq \Aut(T)$. The fact that $T$ is equivariantly isomorphic to the Bruhat--Tits tree of $\PGL_2(k)$ is now an easy consequence of elementary Bass--Serre theory. Since $G$ is closed in $\Aut(T)$, it is closed in $\PGL_2(k)$.  The desired result follows in this case. 

\medskip
Assume now that $F = \FF_2$. Let $k \subset \End(U)$ be the subring generated by the Hua subgroup $H = G^+ \cap G_{0, \infty}$. By Part (1) of the main theorem from \cite{Grueninger}, the ring $k$ is a field. Moreover Lemma~\ref{lem:PrimeExponent} ensures that $k$ is a finite-dimensional algebra over the local field $\FF_2(\!(t) \!)$. It follows that $k$ is itself a local field, and must  therefore be equal to $\FF_q(\!(t)\!) $, where $q=2^d$ for some $d>0$. 

In particular, the field $\sqrt k$ consisting of all the elements of an algebraic closure of $k$ whose square belongs to $k$, is an extension of $k$ of degree $2$: it coincides with $\FF_q(\! ( \sqrt t) \! )$. Therefore, Part (3) of the main theorem from \cite{Grueninger} ensures that the little projective group $G^+$ is isomorphic to $\PSL_2(k)$ and the Moufang set $\partial T$ is equivariantly isomorphic to the projective line over $k$. Since the $\mu$-maps are continuous by Proposition~\ref{prop:mu-map:continuous}, the isomorphism between $G^+$ and $\PSL_2(k)$ must be a homeomorphism. We can now conclude the proof in the same way as in the characteristic~$\neq 2$ case.
\end{proof}

We now come to the proof of Theorem~\ref{thm:MetabelianStab}.

\begin{proof}[Proof of Theorem~\ref{thm:MetabelianStab}]
By Proposition~\ref{prop:2trans}, the group $G$ is $2$-transitive on $\bd T$. In particular it is edge-transitive on $T$ and hence contains some hyperbolic element $\alpha$. Let $\infty$ be the repelling fixed point of $\alpha$. By Lemma~\ref{lem:parab}, the parabolic subgroup $P_\alpha$ coincides with the stabiliser $G_\infty$. Since $G$ is transitive on $\bd T$, the group $G_\infty = P_\alpha$ is conjugate to $G_\xi$, which is metabelian by hypothesis. Therefore $P_\alpha$ is metabelian. 

\medskip
We next claim that the contraction group $U_\alpha$ is abelian. Indeed, let $u \in U_\alpha$. Then for all $n>0$ we have $\alpha^n u\inv \alpha^{-n} u \in [P_\alpha, P_\alpha]$. Therefore we have
\[
u = \lim_{n\to \infty} \alpha^n u\inv \alpha^{-n} u \in \overline{[P_\alpha, P_\alpha]}.
\]
This proves that $U_\alpha \leq \overline{[P_\alpha, P_\alpha]}$, which is abelian since $P_\alpha$ is metabelian. This confirms the claim. 

\medskip
By Corollary~\ref{cor:AbelianContraction}, we deduce that the contraction group $U_\alpha$ is closed. Theorem~\ref{thm:ClosedContraction} thus implies that $G$ is boundary-Moufang and $U_\alpha$ is a root group.
%
%
%
By Proposition~\ref{prop:Habelian}, the Hua subgroup $H = G_{0, \infty}$ is abelian.
The desired conclusion now follows from Lemma~\ref{lem:AbelianHua}.
\end{proof}

\section{Torsion-free root groups}

The main result in this section is a complete classification of boundary-Moufang groups with torsion-free root groups; all of them come from rank one semisimple algebraic groups over $p$-adic fields.

\begin{thm}\label{thm:TorsionFree}
Let $G \leq \Aut(T)$ be boundary-Moufang. Assume that the root groups are torsion-free. 

Then the little projective group $G^+$ is open of finite index in $G$, and there is a prime $p$, a $p$-adic field $k$ and a semisimple algebraic $k$-group $\mathbf G$ of $k$-rank one such that $G^+ \cong \mathbf G(k)$ and $T$ is equivariantly isometric (up to scaling) to the Bruhat--Tits tree of $\mathbf G(k)$. 
\end{thm}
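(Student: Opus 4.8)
The plan is to bootstrap the torsion-freeness of root groups into a local pro-$p$ structure, then apply Glöckner--Willis together with the classification of Moufang sets with torsion-free abelian root groups. First I would recall that by Lemma~\ref{lem:RootGroups:contraction} each root group $U_\infty$ is a contraction group for a suitable hyperbolic element, so Theorem~\ref{thm:GlocknerWillis} applies: since $U_\infty$ is torsion-free, the torsion part is trivial and $U_\infty \cong V_1 \times \cdots \times V_r$ is a direct product of nilpotent $p_i$-adic analytic groups. The next step is to pin down that there is a \emph{single} prime $p$. Here I would argue that the Hua subgroup $H = G_{0,\infty}$, which normalises $U_\infty$ and acts on it with the hyperbolic translation $t$ acting as a contracting automorphism (Lemma~\ref{lem:Hua}), must permute the Sylow-type factors $V_i$; combined with $t$-invariance this forces the decomposition to be compatible with the $H$-action, and a connectedness/transitivity argument on $\bd T$ rules out more than one prime. (One clean way: show that $G_v$ for $v$ on $\ell$ is virtually pro-$p$ for a single $p$ via Proposition~\ref{prop:pro-p}, using that $U_{0,v}$ and $H_v$ are both open-compact subgroups of groups that are, after the Glöckner--Willis analysis, virtually pro-$p$ --- but this needs $H_v$ itself to be virtually pro-$p$, which is the delicate point.)

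Once we know $G$ is locally pro-$p$ for a single prime $p$, I would invoke Theorem~\ref{thm:recap:MoufangSet}: the Moufang set $\bd T$ is proper (by Corollary~\ref{cor:sharply}, $G^+$ is not sharply $2$-transitive, so $H \neq 1$), hence special, and its abelian root groups --- once we know they are abelian --- are either torsion-free divisible or of exponent $p$; torsion-freeness excludes the exponent-$p$ case. So the crux becomes showing the root groups are \emph{abelian}. For this I would exploit sub-Moufang sets: by Lemma~\ref{lem:CentraliserHua}, for a compact open subgroup $K$ of the Hua subgroup, $\centra_{U_\infty}(K)$ is either trivial or gives a boundary-Moufang group with virtually cyclic Hua subgroup, to which Lemma~\ref{lem:AbelianHua} applies and yields a copy of $\PSL_2(k)$ over a local field $k$ of characteristic~$0$ (characteristic~$0$ because the root groups, being subgroups of $U_\infty$, are torsion-free). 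Pushing this through --- finding enough commuting elements, or showing directly that the nilpotent $p$-adic analytic group $U_\infty$ with a contracting automorphism whose associated Lie algebra grading has all weights of the same sign must be abelian (the contracting hypothesis kills the possibility of a nontrivial bracket once one tracks weights) --- gives abelianness. Then $U_\infty$ is a finite-dimensional $\QQ_p$-vector space by Theorem~\ref{thm:GlocknerWillis}.

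With abelian torsion-free root groups and $G$ locally $p$-adic analytic, the theory of Moufang sets with abelian root groups over a local field --- via the Hua-map field construction of \cite[Theorem~6.1]{DeMedtsWeiss06} as in the proof of Lemma~\ref{lem:AbelianHua}, now without the abelian-Hua hypothesis but using instead the quadratic Jordan division algebra classification in the locally compact setting (Example~\ref{ex:skewherm} handles the skew-hermitian cases) --- identifies $G^+$ with the $k$-points of a rank-one semisimple algebraic $k$-group $\mathbf G$: either $\PSL_2$, or a special unitary group $\mathrm{PSU}_3$ or $\mathrm{PSU}_4$ attached to an anisotropic hermitian/skew-hermitian form over a quadratic extension or quaternion algebra, these being exactly the rank-one groups producing a tree with torsion-free root groups in characteristic~$0$. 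Continuity of the $\mu$-maps (Proposition~\ref{prop:mu-map:continuous}) upgrades the abstract isomorphism to a topological one, and properness of continuous homomorphisms from such simple $p$-adic groups (as in the proof of Lemma~\ref{lem:AbelianHua}) shows $G^+$ is closed; cocompactness follows from $2$-transitivity on $\bd T$, and Proposition~\ref{prop:LittleProjMonolith} gives finite index. Finally, Bruhat--Tits theory identifies $T$ equivariantly (up to scaling) with the Bruhat--Tits tree of $\mathbf G(k)$.

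**The main obstacle.** The hard part will be establishing that the root groups are abelian, equivalently that no exotic nilpotent-of-class-$\geq 2$ torsion-free root group can occur at infinity of a locally finite tree. The structural input (Glöckner--Willis plus the Moufang-set machinery) constrains the possibilities heavily, but ruling out, say, a Heisenberg-type root group requires either a weight-grading argument on the associated $\QQ_p$-Lie algebra showing the tree geometry (valency finite, sharp transitivity on $\bd T \setminus \{\xi\}$) is incompatible with a nontrivial bracket, or a reduction via the sub-Moufang-set centraliser technique of Lemma~\ref{lem:CentraliserHua} down to a $\PSL_2$ whose presence propagates back. Coordinating the $p$-adic analytic structure theory with the Moufang-set classification so that the two pictures match on the nose --- and in particular confirming a single prime $p$ governs the whole group --- is where the real work lies.
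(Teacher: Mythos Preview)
Your strategy contains a fundamental error: you aim to prove that the root groups are abelian, but this is simply false. Rank-one semisimple groups over $p$-adic fields such as the unitary groups attached to skew-hermitian forms (Example~\ref{ex:skewherm}) have \emph{non-abelian} Heisenberg-type root groups which are torsion-free. Your weight-grading sketch fails for exactly this reason: if $x, y$ have weight $1$ under the contracting automorphism and $z = [x,y]$ has weight $2$, all weights are positive (so the automorphism is contracting) yet the bracket is nontrivial. Thus the ``main obstacle'' you identify is not an obstacle to be overcome but an impossibility; and indeed you list $\mathrm{PSU}_3$ among the expected outcomes, which is inconsistent with having established abelianness a few lines earlier.

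The paper never tries to show $U$ is abelian. After the Gl\"ockner--Willis decomposition $U \cong U_1 \times \cdots \times U_n$ with $U_i$ nilpotent $p_i$-adic analytic, it analyses the conjugation action of the compact part $H_c$ of the Hua subgroup on $U$: since each $\Aut(U_i)$ is $p_i$-adic analytic, $H_c$ is virtually pro-nilpotent and contains an open subgroup $K = H_1 \times \cdots \times H_n$ with $H_i$ pro-$p_i$. Setting $K_i = \prod_{j\neq i} H_j$, one shows $\centra_U(K_i) = U_i$ (Corollary~\ref{cor:padic:Hua} rules out any extra centraliser), so by Lemma~\ref{lem:SubMoufangSet} each $U_i$ is itself the root group of a sub-Moufang set. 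Proposition~\ref{prop:pro-p} then makes that sub-Moufang set locally $p_i$-adic analytic, and Proposition~\ref{prop:padic:local} identifies it with a rank-one algebraic group over a $p_i$-adic field. The decisive step is Lemma~\ref{lem:padicMoufang}: in such an algebraic Moufang set the \emph{centre} $\centra(U_i)$ is realised as $\centra_{U_i}(H'_i)$ for some $H'_i \leq H_i$. Assembling these, $\centra(U) = \centra_U(K')$ for $K' = \prod_i H'_i$, so $\centra(U)$ is the root group of an \emph{abelian} sub-Moufang set. Now Theorem~\ref{thm:recap:MoufangSet}(iii) --- irreducibility of the Hua action on abelian root groups --- forces the product $\centra(U_1)\times\cdots\times\centra(U_n)$ to have no nontrivial invariant subgroup, whence $n=1$. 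The passage to the centre is the key idea you are missing: the abelian Moufang-set theory is applied not to $U$ but to $\centra(U)$.
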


Given Theorem~\ref{thm:ClosedContraction} and Proposition~\ref{prop:LittleProjMonolith}, the latter result immediately implies the following reformulation of Theorem~\ref{thm:padic}. 

\begin{cor}\label{cor:padic:reformulation}
Let $G \leq \Aut(T)$ be closed and boundary-transitive. Assume that $G$ contains a hyperbolic element whose associated  contraction group  is  closed and torsion-free. 

Then the monolith $G^{(\infty)}$ is open of finite index in $G$, and there is a prime $p$, a $p$-adic field $k$ and a semisimple algebraic $k$-group $\mathbf G$ of $k$-rank one such that $G^{(\infty)} \cong \mathbf G(k)$ and $T$ is equivariantly isometric (up to scaling) to the Bruhat--Tits tree of $\mathbf G(k)$. 
\end{cor}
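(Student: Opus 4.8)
The plan is to derive Corollary~\ref{cor:padic:reformulation} from Theorem~\ref{thm:TorsionFree} by a short formal argument: first use Theorem~\ref{thm:ClosedContraction} to place ourselves in the boundary-Moufang world with torsion-free root groups, then invoke Theorem~\ref{thm:TorsionFree}, and finally identify the monolith $G^{(\infty)}$ with the little projective group $G^+$ via Proposition~\ref{prop:LittleProjMonolith}. So essentially all the content lies in Theorem~\ref{thm:TorsionFree}, and the deduction of the corollary is light.

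In detail: I would fix a hyperbolic $\alpha \in G$ whose contraction group $U_\alpha$ is closed and torsion-free. Since $G$ is closed and acts transitively on $\bd T$, Theorem~\ref{thm:ClosedContraction} applies and shows that $G$ is boundary-Moufang; moreover, by the uniqueness clause of that theorem the root groups of $G$ form a conjugacy class containing $U_\alpha$, so each root group is a $G$-conjugate of $U_\alpha$ and is therefore torsion-free. Thus $G$ satisfies the hypotheses of Theorem~\ref{thm:TorsionFree}, which supplies a prime $p$, a $p$-adic field $k$ and a semisimple algebraic $k$-group $\mathbf G$ of $k$-rank one such that $G^+$ is open of finite index in $G$, $G^+ \cong \mathbf G(k)$, and $T$ is equivariantly isometric up to scaling to the Bruhat--Tits tree of $\mathbf G(k)$. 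It then remains to transfer these assertions from $G^+$ to $G^{(\infty)}$: being open in $G$, the subgroup $G^+$ is also closed, so $G^+ = \overline{G^+}$, while Proposition~\ref{prop:LittleProjMonolith} (now applicable, $G$ being boundary-Moufang) identifies $\overline{G^+}$ with $G^{(\infty)}$. Hence $G^{(\infty)} = G^+$, and every statement above about $G^+$ holds verbatim for $G^{(\infty)}$, which is exactly the corollary.

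The only point requiring care in this argument is the equality $G^{(\infty)} = G^+$, and it is precisely the openness of $G^+$ (part of the conclusion of Theorem~\ref{thm:TorsionFree}) together with Proposition~\ref{prop:LittleProjMonolith} that secures it; beyond this there is no real obstacle, since the heavy lifting is done by Theorem~\ref{thm:TorsionFree}. For completeness let me indicate where the genuine difficulty in that theorem lies. By Lemma~\ref{lem:RootGroups:contraction} each root group is the contraction group of a hyperbolic element, so the Gl\"ockner--Willis structure theorem (Theorem~\ref{thm:GlocknerWillis}) forces a torsion-free root group to be a finite direct product of nilpotent $p_i$-adic analytic groups, in particular nilpotent. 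The substantive work is then to show that a single prime $p$ actually occurs and that the root groups and the Hua subgroup are abelian: here one would combine the product decompositions of vertex stabilizers (Propositions~\ref{prop:VertexStab}, \ref{prop:VertexStab(n)} and \ref{prop:pro-p}) with the sub-Moufang sets arising as centralizers of compact open subgroups of the Hua subgroup (Lemma~\ref{lem:CentraliserHua}) to exclude virtually cyclic Hua subgroups and non-abelian root groups. Once everything in sight is abelian one concludes, exactly as in the proof of Theorem~\ref{thm:MetabelianStab}, via Lemma~\ref{lem:AbelianHua} and Bruhat--Tits theory for rank-one semisimple groups over $p$-adic fields, obtaining $G^+ \cong \mathbf G(k)$ and the identification of $T$ with the Bruhat--Tits tree.
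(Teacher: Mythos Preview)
Your deduction of the corollary from Theorem~\ref{thm:TorsionFree} is correct and is exactly the paper's argument: Theorem~\ref{thm:ClosedContraction} plus Proposition~\ref{prop:LittleProjMonolith} reduce everything to Theorem~\ref{thm:TorsionFree}, and the openness of $G^+$ forces $G^+ = \overline{G^+} = G^{(\infty)}$.

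However, your closing sketch of how Theorem~\ref{thm:TorsionFree} is proved is off the mark, and since you offer it ``for completeness'' it is worth flagging. The proof does \emph{not} show that the root groups and the Hua subgroup are abelian, and it does \emph{not} conclude via Lemma~\ref{lem:AbelianHua}. Rank one semisimple $p$-adic groups generically have non-abelian root groups (cf.\ Example~\ref{ex:skewherm}), so reducing to the abelian case would be wrong. What the paper actually does is: use Gl\"ockner--Willis to write $U \cong U_1 \times \dots \times U_n$ with $U_i$ nilpotent $p_i$-adic analytic; analyse the $H$-action on each factor to produce compact subgroups $K_i \leq H$ with $\centra_U(K_i) = U_i$; apply Lemma~\ref{lem:SubMoufangSet} and Proposition~\ref{prop:padic:local} to recognise each resulting sub-Moufang set as algebraic over a $p_i$-adic field; then invoke Lemma~\ref{lem:padicMoufang} to find $H'_i$ with $\centra_{U_i}(H'_i) = \centra(U_i)$, so that $\centra_U(K') = \centra(U)$ is the root group of a sub-Moufang set with \emph{abelian} root groups, on which Theorem~\ref{thm:recap:MoufangSet}(iii) forces $n=1$. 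The conclusion $G^+ \cong \mathbf G(k)$ then comes from Proposition~\ref{prop:padic:local}, not from Lemma~\ref{lem:AbelianHua}. So your corollary argument stands, but drop or correct the sketch.
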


\subsection{Local recognition}

The strategy of proof of Theorem~\ref{thm:TorsionFree} is based on the following fact, showing that it suffices to establish  the analyticity locally. 

\begin{prop}\label{prop:padic:local}
Let $G \leq \Aut(T)$ be closed, non-compact and boundary-transitive. 

If some compact open subgroup $U < G$ is $p$-adic analytic for some prime $p$, then the monolith $G^{(\infty)}$ is open of finite index in $G$, and there is a  $p$-adic field $k$ and a  semisimple linear algebraic $k$-group $\mathbf G$ of $k$-rank one such that $G^{(\infty)} \cong \mathbf G(k)$ and $T$ is equivariantly isometric (up to scaling) to the Bruhat--Tits tree of $\mathbf G(k)$. 
\end{prop}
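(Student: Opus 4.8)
The plan is to promote the \emph{local} analyticity hypothesis to the statement that $G$ itself is a $p$-adic analytic group, and then to identify its monolith by combining the structure theory of $p$-adic Lie groups with the Burger--Mozes dichotomy (Theorem~\ref{thm:BurgerMozes}). First, since $U$ is open in $G$, the $\QQ_p$-analytic manifold structure on $U$ (see \cite{DDMS}) propagates by left translation to a $\QQ_p$-analytic manifold structure on all of $G$ making the group operations analytic; thus $G$ is a $p$-adic analytic group, with finite-dimensional $\QQ_p$-Lie algebra $\g = \mathrm{Lie}(G)$. Put $N := G^{(\infty)}$. By Theorem~\ref{thm:BurgerMozes}, $N$ is compactly generated, topologically simple, acts doubly transitively --- hence properly and cocompactly --- on $T$, and $G/N$ is compact; being a closed normal subgroup of $G$, it is $p$-adic analytic, with Lie algebra $\mathfrak n = \mathrm{Lie}(N)$ an ideal of $\g$. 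Moreover $N$ is not discrete: it is infinite, since it is doubly transitive on the infinite set $\bd T$, whereas a discrete group acting properly and cocompactly on a tree is virtually free by Bass--Serre theory, and an infinite virtually free group is never simple (its finite-index free normal subgroup would have to coincide with it, which is absurd for a non-trivial free group). Hence $\mathfrak n \neq 0$, which forces $N$ to be open in $G$; then $G/N$ is at once compact and discrete, and therefore finite.

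The heart of the proof is to show that $\mathfrak n$ is a \emph{simple} $\QQ_p$-Lie algebra. It is not solvable: otherwise a sufficiently small uniform compact open subgroup of $N$ would be solvable by the Lazard correspondence \cite{DDMS}, contradicting the fact that a compactly generated topologically simple totally disconnected locally compact group has no soluble open subgroup \cite[Theorem~2.2]{Willis07}. Its solvable radical $\mathfrak r$ vanishes: $\mathfrak r$ is a characteristic ideal, hence $\mathrm{Ad}(N)$ preserves it, and for $x$ close to $0$ in $\mathfrak r$ the automorphism $\mathrm{Ad}(\exp x)$ acts trivially on $\mathfrak n/\mathfrak r$ (since $\mathrm{ad}(x)$ sends $\mathfrak n$ into $\mathfrak r$); so the kernel of the induced action $N \to \GL(\mathfrak n/\mathfrak r)$ is a non-trivial closed normal subgroup of $N$, hence equals $N$ by simplicity, forcing $[\mathfrak n,\mathfrak n] \subseteq \mathfrak r$ and thus $\mathfrak n = \mathfrak r$ (as $\mathfrak n/\mathfrak r$ is semisimple, hence perfect), a contradiction. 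So $\mathfrak n$ is semisimple; writing it as the direct sum of its simple ideals, $\mathrm{Ad}(N)$ permutes these summands, and the sum over a single orbit is an $\mathrm{Ad}(N)$-invariant ideal, so the same kernel argument applied to such an ideal shows there is only one orbit. But then the kernel of the resulting homomorphism $N \to \Sym(m)$, where $m$ is the number of simple summands, is a closed normal subgroup of index $m$; simplicity forces $m = 1$, and $\mathfrak n$ is simple over $\QQ_p$.

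Viewed over its centroid $k$ --- a finite extension of $\QQ_p$ --- the Lie algebra $\mathfrak n$ becomes absolutely simple, hence of the form $\mathrm{Lie}(\mathbf G)$ for a connected absolutely simple $k$-group $\mathbf G$. The adjoint representation $\mathrm{Ad}\colon N \to \GL(\mathfrak n)$ has trivial kernel (a non-trivial kernel would again be all of $N$ and make $\mathfrak n$ abelian), so $N$ is a topologically simple, compactly generated \emph{linear} $p$-adic Lie group whose Lie algebra over $k$ is $\mathrm{Lie}(\mathbf G)$. By the structure theory of $p$-adic analytic groups, the classification of semisimple groups over local fields, and Tits's results on abstract simplicity \cite{Tits:64} (needed to pin down the isogeny class), $N$ is isomorphic as a topological group to $\mathbf G(k)$ for a semisimple $k$-group $\mathbf G$, which is $k$-isotropic because $N$ is non-compact. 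Finally $N \cong \mathbf G(k)$ acts properly and cocompactly on $T$ and, simultaneously, on its Bruhat--Tits building $X$, whose dimension is the $k$-rank of $\mathbf G$; since $T$ and $X$ are then quasi-isometric and a tree is quasi-isometric to a Euclidean building only when the building is one-dimensional, we conclude that $\mathbf G$ has $k$-rank one and that $X$ is a tree. The uniqueness, up to equivariant rescaling, of a tree on which $\mathbf G(k)$ acts properly, cocompactly and minimally is classical, so $T$ is equivariantly isometric up to scaling to the Bruhat--Tits tree of $\mathbf G(k)$. I expect this last paragraph to be the main obstacle: passing from the infinitesimal statement that $\mathfrak n$ is simple over $\QQ_p$ to a genuine isomorphism $N \cong \mathbf G(k)$ of topological groups --- with $\mathbf G$ an honest $k$-rank-one semisimple $k$-group --- requires knowing that the topologically simple $p$-adic Lie group $N$ is ``saturated'' (not a proper immersed Lie subgroup of its adjoint group) and then identifying the precise member of the isogeny class; by contrast, forcing the $k$-rank to be one via the two cocompact $N$-actions is comparatively soft.
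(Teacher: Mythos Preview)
Your approach parallels the paper's: both promote local analyticity to global (the paper cites \cite[Theorem~8.32]{DDMS}), then invoke the classification of topologically simple $p$-adic Lie groups to identify the monolith. The paper does this in one stroke by citing We{\u\i}sfe{\u\i}ler's classification \cite{Weisfeiler}; you instead unpack the argument, proving by hand that $\mathfrak n$ is simple before appealing to the classification of simple $p$-adic Lie algebras. That extra detail is fine, and your quasi-isometry argument for $k$-rank one is more explicit than what the paper writes (the paper's proof is terse on where rank one comes from).

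There is, however, one genuine gap. The inference ``$\mathfrak n \neq 0$, which forces $N$ to be open in $G$'' is false as stated: a closed normal subgroup of a $p$-adic Lie group with non-zero Lie algebra need not be open (take $N = \QQ_p \times \{0\}$ inside $G = \QQ_p^2$, where $G/N \cong \QQ_p$ is even compact). Nothing in your argument up to that point rules this out. The paper avoids the issue by postponing the openness claim until \emph{after} the identification $N \cong \mathbf G(k)$: since $\centra_G(N) = 1$ (any element centralising the $2$-transitive group $N$ fixes $\bd T$ pointwise, hence $T$ pointwise), conjugation embeds $G$ into $\Aut(\mathbf G(k))$, and then finiteness of $\Out(\mathbf G(k))$ --- which follows from \cite[Corollary~8.13]{BorelTits73} together with the finiteness of $\Aut(k)$ --- gives $[G:N] < \infty$, hence $N$ open. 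You should reorganise accordingly; none of your subsequent Lie-algebra arguments actually use openness of $N$, so this is purely a matter of reordering.

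Two minor points: the kernel of $N \to \Sym(m)$ has index dividing $m!$, not $m$ (the conclusion is unaffected); and your honest flag on the final paragraph is well placed --- the passage from ``$\mathfrak n$ simple over $\QQ_p$'' to ``$N \cong \mathbf G(k)$ as topological groups'' is exactly the content of the We{\u\i}sfe{\u\i}ler-type result the paper cites, and cannot really be shortcut.
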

 
\begin{proof}
By Proposition~\ref{prop:LittleProjMonolith} the monolith $G^{(\infty)}$ is topologically simple. If $U$ is $p$-adic analytic, then   so is $G$ by \cite[Theorem~8.32]{DDMS}. By the classification of simple $p$-adic Lie algebras (see \cite{Weisfeiler}), it follows that every topologically simple $p$-adic analytic group is isomorphic to the group  $\mathbf G(k)$ of $k$-rational points of an adjoint semisimple algebraic $\mathbf G$ defined over a finite extension $k$ of $\QQ_{p}$, and of $k$-rank one. Thus $G^{(\infty)} \cong \mathbf G(k)$.

Remark that $\mathbf G(k)$ has finite outer-automorphism group: this follows from \cite[Corollary~8.13]{BorelTits73} and the fact that $k$ has a finite automorphism group. Moreover we have  $\centra_G( G^{(\infty)}) = 1$ because this centraliser must act trivially on $\bd T$, and hence on $T$. It follows that $G^{(\infty)}$ is open in $G$, hence of finite index by Proposition~\ref{prop:LittleProjMonolith}.
Bass--Serre theory implies that any infinite locally finite tree on which $\mathbf G(k)$ acts continuously and edge-transitively is equivariantly isomorphic to the Bruhat--Tits tree $\mathbf G(k)$, and this concludes the proof.
\end{proof}

This has the following noteworthy consequence regarding the structure of the Hua subgroup of a boundary-Moufang group. 
In fact, it is an interesting open problem whether there exists an (abstract) Moufang set with vitually cyclic infinite Hua subgroup.
\begin{cor}\label{cor:padic:Hua}
Let $G \leq \Aut(T)$ be boundary-Moufang with torsion-free root groups. 

Then the Hua subgroup is not virtually cyclic. 
\end{cor}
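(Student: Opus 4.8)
My plan is to argue by contradiction: suppose $G$ is boundary-Moufang with torsion-free root groups but that the Hua subgroup $H=G_{0,\infty}$ is virtually cyclic, and then show that $G$ is, up to finite index, a rank one $p$-adic algebraic group, whose Hua subgroup is visibly not virtually cyclic.

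First I would observe that torsion-free root groups are automatically nilpotent, so that Proposition~\ref{prop:VirtCycl} becomes applicable. Indeed, by Lemma~\ref{lem:RootGroups:contraction} each root group $U_\infty$ is the contraction group of a hyperbolic element $\alpha$ with repelling fixed point $\infty$, and conjugation by $\alpha$ restricts to a contracting automorphism of the totally disconnected locally compact group $U_\infty$; since $U_\infty$ is torsion-free, Theorem~\ref{thm:GlocknerWillis} yields $U_\infty\cong V_1\times\cdots\times V_r$ with each $V_i$ a nilpotent $p_i$-adic analytic group, and a finite direct product of nilpotent groups is nilpotent. Proposition~\ref{prop:VirtCycl} then provides a single prime $p$ such that every compact open subgroup of $G$ is virtually pro-$p$.

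The next, and most delicate, step is to upgrade the root groups from ``locally pro-$p$'' to ``$p$-adic analytic''. A compact open subgroup of a root group is torsion-free and virtually pro-$p$, hence pro-$p$ (in a torsion-free profinite group, a Sylow pro-$q$ subgroup with $q\ne p$ embeds into a finite quotient, hence is trivial); thus $U_\infty$ is locally pro-$p$. In the decomposition $U_\infty\cong V_1\times\cdots\times V_r$ above, each nontrivial factor $V_i$ is a closed subgroup, hence locally pro-$p$; being also $p_i$-adic analytic it is locally pro-$p_i$, which forces $p_i=p$ (a nontrivial profinite group cannot be simultaneously pro-$p$ and pro-$q$ for distinct primes). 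Hence $U_\infty$ is a finite direct product of $p$-adic analytic groups, so it is itself $p$-adic analytic, and so are its compact open subgroups.

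Finally I would run the local-recognition machinery. Fix a vertex $v$ on the line $\ell$ joining $0$ to $\infty$; since $H$ is virtually cyclic, $H_v$ is finite by Lemma~\ref{lem:Hua}, hence $p$-adic analytic, while $U_{0,v}$ is $p$-adic analytic by the previous step, so Proposition~\ref{prop:pro-p} shows that $G_v$ is $p$-adic analytic. Thus $G$ has a $p$-adic analytic compact open subgroup, and Proposition~\ref{prop:padic:local} applies: the monolith $G^{(\infty)}$ is open of finite index in $G$ and $G^{(\infty)}\cong\mathbf{G}(k)$ for some $p$-adic field $k$ and some semisimple algebraic $k$-group $\mathbf{G}$ of $k$-rank one, $T$ being its Bruhat--Tits tree. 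Since $(G^{(\infty)})_{0,\infty}=H\cap G^{(\infty)}$ has finite index in $H$, it suffices to see that it is not virtually cyclic; after conjugating (using $2$-transitivity on $\partial T$, Proposition~\ref{prop:2trans}) one may take $\{0,\infty\}$ to be the pair of ends of the apartment of $T$ attached to a maximal $k$-split torus $S\le\mathbf{G}$, and then $S(k)\cong k^\times$ preserves that apartment, acts on it by translations, and so fixes both $0$ and $\infty$; hence $k^\times\hookrightarrow(G^{(\infty)})_{0,\infty}$. As $k^\times$ contains the infinite compact subgroup $\mathcal{O}_k^\times$ whereas every compact subgroup of a virtually cyclic group is finite, $(G^{(\infty)})_{0,\infty}$ --- hence $H$ --- is not virtually cyclic, the desired contradiction. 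The hard part, as indicated, is the uniformity of the prime in the third step: Gl\"ockner--Willis by itself only produces a product over a priori different primes, and one genuinely needs Proposition~\ref{prop:VirtCycl} (together with the elementary incompatibility of pro-$p$ and pro-$q$) to collapse these to a single prime; everything downstream is an application of results already established.
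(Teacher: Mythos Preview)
Your proof is correct and follows essentially the same route as the paper's: assume the Hua subgroup is virtually cyclic, use Gl\"ockner--Willis to see the root groups are nilpotent products of $p_i$-adic analytic factors, invoke Proposition~\ref{prop:VirtCycl} to collapse to a single prime, then apply Proposition~\ref{prop:pro-p} and Proposition~\ref{prop:padic:local} to recognise $G$ as a rank one $p$-adic algebraic group, where the Hua subgroup is visibly not virtually cyclic. You supply more detail at two points where the paper is terse --- the passage from ``virtually pro-$p$ compact opens'' to ``all $p_i=p$'' via the torsion-free Sylow argument, and the explicit embedding $k^\times\hookrightarrow (G^{(\infty)})_{0,\infty}$ in place of the paper's appeal to the Levi subgroup --- but these are elaborations rather than departures.
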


\begin{proof}
Assume by contradiction that the Hua subgroup is virtually cyclic.
By Lemma~\ref{lem:RootGroups:contraction}, the root group $U$ is a contraction group. Since $U$ is torsion-free by hypothesis, it follows from Theorem~\ref{thm:GlocknerWillis} that root groups are nilpotent and decompose as direct products of $p$-adic analytic groups over various primes. By Proposition~\ref{prop:VirtCycl} there is in fact only a single prime involved, say $p$. By Proposition~\ref{prop:pro-p}, this implies that compact open subgroups in $G$ are virtually $p$-adic analytic. Therefore we can invoke Proposition~\ref{prop:padic:local}, ensuring that $G$ is a semisimple $p$-adic algebraic group acting on its Bruhat--Tits tree. A contradiction arises since the Hua subgroup coincides with the Levi subgroup of a parabolic, and is thus not virtually cyclic. 
\end{proof}

We shall also need the following property of Moufang sets associated to rank one semisimple algebraic groups over $p$-adic fields. 

\begin{lem}\label{lem:padicMoufang}
Let $\MM(U, \tau)$ be the standard Moufang set  associated to a rank one semisimple algebraic group over some $p$-adic field. 

Then there is a subgroup $H'$ of the Hua subgroup such that $\centra(U) = \centra_U(H')$. 
\end{lem}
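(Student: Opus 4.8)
The plan is to unwind the statement using the explicit description of Moufang sets coming from rank one semisimple $p$-adic groups, and to find $H'$ already inside a torus. Recall that $\MM(U,\tau)$ is the Moufang set at infinity of the Bruhat--Tits tree of $\mathbf G(k)$, where $\mathbf G$ is a rank one semisimple algebraic group over a $p$-adic field $k$; here $U$ is the unipotent radical of a minimal parabolic $P$, the Hua subgroup $H$ is (the $k$-points of) a Levi factor $\mathbf L = \mathbf Z_{\mathbf G}(\mathbf S)$ of $P$, with $\mathbf S$ a maximal $k$-split torus of $k$-rank one, and $\tau$ is represented by a Weyl-group element normalising $\mathbf S$ and swapping the two fixed ends $0,\infty$. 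What we must produce is a subgroup $H' \le H$ whose centraliser in $U$ equals $\centra(U)$, the centre of $U$.

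First I would treat the two-dimensional (i.e.\ split $\PSL_2$/$\PGL_2$ or $\mathrm{SL}_2$-type) case: here $U$ is abelian, so $\centra(U) = U$, and one simply takes $H' = 1$, for which $\centra_U(H') = U = \centra(U)$. So the content is in the case where $U$ is non-abelian, which for rank one $p$-adic groups means $\mathbf G$ is of type corresponding to a (possibly twisted) special unitary or other higher rank one group, and $U$ is a two-step nilpotent $p$-adic analytic group whose centre $\centra(U)$ is the ``long root subgroup''. The key algebraic input is that the split torus $\mathbf S(k) \cong k^\times$ (or a suitable one-parameter subgroup of it) sits inside $H$, and that $\mathbf S$ acts on $U$ with exactly two weights, a character $\chi$ on $U/\centra(U)$ and $2\chi$ (or $\chi$ and its square, multiplicatively) on $\centra(U)$, both nontrivial. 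Concretely, I would pick $H'$ to be a suitable compact (or even finite) subgroup of $\mathbf S(k)$ — for instance the image of a well-chosen scalar $s \in k^\times$ of infinite multiplicative order, or more safely the full group $\mathbf S(k)$ itself restricted to where it acts without nonzero fixed vectors — and argue that an element $u \in U$ centralised by $H'$ must lie in the centre because on $U/\centra(U)$ the torus acts with a nontrivial character, forcing the image of $u$ in $U/\centra(U)$ to be trivial. I expect $H' = \mathbf S(k)$ to work directly: $\centra_U(\mathbf S(k))$ is exactly the fixed points of the $\chi$- and $2\chi$-actions, and since $\chi$ is a nontrivial character of $k^\times$ valued in $\Aut$ of the relevant $k$-vector group, $\centra_U(\mathbf S(k)) = \centra(U)$.

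The steps in order would be: (1) identify $\MM(U,\tau)$ with the boundary Moufang set of $\mathbf G(k)$ acting on its Bruhat--Tits tree, so that $U$, $H$, $\tau$ get their algebraic meaning (unipotent radical, Levi, Weyl element); (2) if $U$ is abelian, set $H'=1$ and we are done; (3) otherwise, use the structure theory of rank one semisimple $k$-groups (Borel--Tits) to see that $\centra(U)$ is the unique nontrivial proper $\mathbf L(k)$-invariant (equivalently $\mathbf S(k)$-invariant) subgroup of $U$, coinciding with the higher root subgroup, and that $\mathbf S(k) \le \mathbf L(k) = H$; (4) show $\centra_U(\mathbf S(k)) = \centra(U)$ by a weight argument: $\mathbf S(k)$ acts on the $k$-vector group $U/\centra(U)$ through a nontrivial character $k^\times \to k^\times$ composed with scalar multiplication, whose only fixed point is $0$; (5) conclude by taking $H' = \mathbf S(k)$ (or, if one wants $H'$ topologically small, a single element $s$ of $k^\times$ with $\chi(s) \ne 1$ and $\chi(s)$ not a root of unity, which still has no nonzero fixed vector on the $k$-vector group).

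The main obstacle I anticipate is purely bookkeeping rather than conceptual: making precise that in \emph{every} rank one $p$-adic case (including the twisted forms, where $U$ may be defined over a ramified or unramified quadratic extension and the ``character'' $\chi$ is really a norm-type map rather than a literal character of $k^\times$) the torus still acts on $U/\centra(U)$ without nonzero fixed vector. This is where one must be a little careful — e.g.\ in the unitary case of Example~\ref{ex:skewherm}, the action of the diagonal torus on the ``$X$-part'' of $U$ is $x \mapsto x s$ for $s$ in (a subgroup of) $\ell^\times$, whose fixed points are $0$ only, while on the ``$k$-part'' (the centre) it is $s \mapsto s^\sigma s$ (a nontrivial action as well) — so in fact even $\centra_U(H') \cap \centra(U)$ could be smaller than $\centra(U)$ if one chooses $H'$ too large, forcing one to choose $H'$ so that it fixes $\centra(U)$ pointwise but acts fixed-point-freely on $U/\centra(U)$. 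Resolving this cleanly is the crux: one wants $H'$ large enough to kill $U/\centra(U)$ but small enough (or chosen to lie in a particular one-parameter subgroup) to centralise $\centra(U)$, and the existence of such an $H'$ follows from the fact that the weight $\chi$ on $U/\centra(U)$ and the weight on $\centra(U)$ are distinct, so their ratio is a nontrivial character of the torus, whose kernel is a proper subgroup $H'$ that does the job.
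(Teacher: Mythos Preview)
Your approach via the torus action and its weights is different from the paper's, which is far more direct. The paper observes that in the non-abelian case (which, by the classification of rank-one $p$-adic groups, is exactly the skew-hermitian Moufang set of Example~\ref{ex:skewherm}) the element $\tau^2$ already lies in the Hua subgroup and acts on $U$ by $(x,s)\mapsto(-x,s)$; since $\chr k\neq 2$ and $h$ is non-degenerate, the fixed set of this involution is precisely $\{(0,s)\}=\centra(U)$. Hence $H'=\langle\tau^2\rangle$ suffices, with no structure theory beyond the explicit formulas.

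Your torus-weight strategy is conceptually sound, and you correctly diagnose the obstacle that $\mathbf S(k)$ itself is too large (it acts nontrivially on $\centra(U)$ via $2\chi$, so $\centra_U(\mathbf S(k))$ is in fact trivial). But your proposed fix is misformulated. Taking $H'$ to be the kernel of the \emph{ratio} $2\chi/\chi=\chi$ gives the trivial subgroup, since $\chi$ is essentially the identity character of a rank-one split torus; this $H'$ certainly does not cut $U$ down to its centre. What you actually need is $H'=\ker(2\chi)\subset\mathbf S(k)$, the subgroup acting trivially on $\centra(U)$: this is $\{\pm 1\}$, and $-1$ acts on $U/\centra(U)$ by $\chi(-1)=-1$, hence fixed-point-freely in characteristic $\neq 2$. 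Unwinding, the element $-1\in\mathbf S(k)$ acts on $U$ as $(x,s)\mapsto(-x,s)$ --- which is exactly the paper's $\tau^2$. So once corrected, your route converges to the same concrete element; the paper simply writes it down at the outset.
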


\begin{proof}
If $U$ is abelian, then this is obvious; so assume that $U$ is non-abelian.
By the classification of semisimple algebraic groups \cite{Tits:Boulder}, there is only one remaining class of rank one groups over
$p$-adic fields, namely the rank one groups corresponding to skew-hermitian forms of Witt index one.
The corresponding Moufang sets are precisely the ones that we described in Example~\ref{ex:skewherm}.

So let $\MM(U, \tau)$ be as in Example~\ref{ex:skewherm}, and observe that $\tau^2$ maps an element $(a,s) \in U$ to $(-a,s)$.
Moreover, $\tau^2$ is an element of $G^+$ fixing $0$ and $\infty$, and hence it belongs to the Hua subgroup $H$.
Since the skew-hermitian form $h$ is non-degenerate, the center $\centra(U)$ coincides with $\{ (0,s) \mid s \in \Fix_K(\sigma) \}$,
and hence $\centra(U) = \centra_U(\tau^2)$, proving the claim.
\end{proof}

\subsection{Classification in characteristic 0}

\begin{proof}[Proof of Theorem~\ref{thm:TorsionFree}]
Let $0, \infty \in \bd T$ be distinct boundary points and set $U = U_\infty$. By Lemma~\ref{lem:RootGroups:contraction}, the root group $U$ is a contraction group. Since $U$ is torsion-free by hypothesis, it follows from Theorem~\ref{thm:GlocknerWillis} that there are primes $p_1, \dots, p_n$ and for each $i$ some infinite closed $p_i$-adic analytic nilpotent subgroup $ U_i \leq U$ such that $U \cong U_1 \times \dots \times U_n$. In particular root groups are nilpotent. 

Our goal is to show that $n =1$ or, equivalently, that $U$ is $p$-adic analytic for some fixed prime $p$. 

Let $H = G_{0, \infty}$ be the Hua subgroup and let $H_c \leq H$ be its unique maximal compact open normal subgroup (see Lemma~\ref{lem:Hua}). Let 
\[
\varphi \colon H \to \Aut(U) = \Aut(U_1) \times \dots \times \Aut(U_n)
\]
be the continuous homomorphism induced by the conjugation action of $H$ on $U$, and let $\varphi_i \colon H \to \Aut(U_i)$ be the induced $H$-action on $U_i$.  Since $\Aut(U_i)$ acts faithfully on the $\QQ_{p_i}$-Lie algebra of $U_i$, it follows that any compact subgroup of $\Aut(U_i)$ is isomorphic to a closed subgroup of $\GL_d(\QQ_{p_i})$ for some $d$. Since any closed subgroup of a $p_i$-adic analytic group is itself analytic (see \cite[Sect.~V.9]{Serre65}), it follows that $\Aut(U_i)$ is $p_i$-adic analytic and, in particular, locally pro-$p_i$. In particular $\Aut(U)$ is locally pro-nilpotent and, hence, the compact group $\varphi(H_c)$ is virtually pro-nilpotent.
Notice that $\varphi$ is injective because the conjugation action of $H$ on $U$ is equivalent with the action of $H$ on the
elements of $\bd T \setminus \{ \infty \}$, which is faithful by definition.
It follows that $H_c$ is virtually pro-nilpotent as well.
Therefore $H_c$ has an open subgroup $K$ isomorphic to $K \cong H_1 \times \dots \times H_n$, where $H_i$ is a pro-$p_i$ subgroup of $H_c$. Moreover   $\varphi_i(H_i)$ is open in $\varphi_i(H_c)$, and $\Ker(\varphi_i) \cap H_i = 1$ since $\varphi$ is injective; and $\varphi_j(H_i) =1 $ for all $j \neq i$. 


\medskip
For each $i \in \{1, \dots, n\}$, set $K_i = \la H_j : j \neq i\ra$. 
We claim that $\centra_U(K_i) = U_i$. 

Clearly $U_i \leq \centra_U(K_i)$. If the inclusion were proper, then $\centra_U(K_i)$ would contain some non-trivial element $u \in \la U_j : j \neq i\ra$. Since $u$ also commutes with $H_i$, it would follow that $u \in \centra_U(K)$. By Lemma~\ref{lem:CentraliserHua}, this implies that $\centra_G(K)/K$ is a boundary-Moufang group with torsion-free root group and virtually cyclic Hua subgroup. This is impossible by Corollary~\ref{cor:padic:Hua}. Thus $\centra_U(K_i) = U_i$ as claimed.

\medskip
By Lemma~\ref{lem:SubMoufangSet}, the group $\centra_G(K_i)$ acts as a boundary-Moufang set on $T^{K_i}$ with $\centra_U(K_i) = U_i$ as a root group. Let $\widehat K_i$ be the kernel of the $\centra_G(K_i)$-action on $T^{K_i}$. Since $0, \infty \in \bd T^{K_i}$, it follows $\widehat K_i$ is a compact subgroup of $H$ which contains $K_i$. 

From Proposition~\ref{prop:pro-p}, we deduce that $\centra_G(K_i)/\widehat K_i$ is locally $p_i$-adic analytic. In view  of Propositions~\ref{prop:LittleProjMonolith} and~\ref{prop:padic:local}, we infer that there is a  $p_i$-adic field $k_i$ and a  semisimple linear algebraic $k_i$-group $\mathbf G_i$ of $k_i$-rank one such that $G^{(\infty)} = \overline{G^+} \cong \mathbf G_i(k_i)$. Thus the Moufang set $\bd T^{K_i}$ is an algebraic Moufang set over the $p_i$-adic field $k_i$. From Lemma~\ref{lem:padicMoufang}, it follows that there is some subgroup $H'_i \leq H_i$ such that $\centra(U_i) = \centra_{U_i}(H'_i)$. 

We finally set $K' =  H'_1 \times \dots \times H'_n$. Thus we have $\centra_{U}(K') = \centra(U)$. By Lemma~\ref{lem:SubMoufangSet}, it follows that $\centra(U)$ is the root group of some sub-Moufang set. From Theorem~\ref{thm:recap:MoufangSet}(iii), it follows that $\centra(U) = \centra(U_1) \times \dots \times \centra(U_n)$ has no non-trivial subgroup invariant under the Hua subgroup of that Moufang set. Since $\centra(U_i) \neq 1$ for all $i$, we infer that $n=1$, as desired. 

We set $\mathbf G= \mathbf G_1$ and $k= k_1$. By \cite{Platonov}, the group $\mathbf G(k)$ is abstractly simple. Therefore we must have $G^+ = \overline{G^+}$, and $G^+$ is closed, as desired. 
\end{proof}


\appendix

\section{Multiply transitive groups with soluble stabilisers}


In this closing section, we point out that projective groups $\PGL_2(k)$ over commutative fields can be characterised among \emph{abstract} triply transitive permutation groups. Various such characterisations were established by  J.~Tits in his PhD thesis~\cite{Tits_thesis}. In the following result, we record only one of them, as it is closely related to Theorem~\ref{thm:MetabelianStab} from the introduction. Notice however that Theorem~\ref{thm:MetabelianStab} cannot be derived from those abstract characterisations, since it also applies to $\PSL_2(k)$, which is not triply transitive in general.


\begin{thm}\label{thm:n-ply:transitive}
Let $n >0 $ and $G \leq \Sym(X)$ be a permutation group.
If $G$ is \mbox{$(n+1)$}\nobreakdash-\hspace{0pt}transitive and the point-stabilisers are soluble of length at most $n$, then one of the following holds. 

\begin{enumerate}[\rm (i)]
\item $n=1$ and $G$ is the group of linear maps over a field $k$, and $X = \mathbb A^1(k)$.

\item $n=2$ and there is a field $k$ such that $ G = \PGL_2(k)$, and $X = \mathbb P^1(k)$. 

\item $n=3$ and $G = \Sym(4)$ or $G = \Sym(5)$. 

\item $n=4$ and $G = \Sym(5)$.  
\end{enumerate}

\end{thm}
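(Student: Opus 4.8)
The plan is to argue by induction on $n$, taking as the object to which the induction hypothesis is applied the point stabiliser $G_x$ (for some $x \in X$) acting on $X' = X \setminus \{x\}$. Since $G$ is $(n+1)$-transitive, $G_x$ is $n$-transitive on $X'$, and being a point stabiliser of $G$ it is soluble of derived length at most $n$. The two anchors of the induction are the cases $n=1$ and $n=2$, which amount to recognising the affine group $\mathrm{AGL}_1(k)$ and the projective group $\PGL_2(k)$ among multiply transitive groups; for these I would invoke the characterisations of Tits' thesis \cite{Tits_thesis}. The new content of the inductive step is a reduction lemma asserting that the hypotheses descend from the parameter $n$ to $n-1$ upon passing from $G$ to $G_x$; once it is available, the classification for $n \geq 3$ collapses to a short discussion of finite groups.

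The reduction lemma is the heart of the argument. Assume $n \geq 2$ and that $G_x$ has derived length exactly $n$ (if its length is at most $n-1$, the desired bound on $G_{x,y}$ is immediate), and set $N = G_x^{(n-1)}$, the penultimate term of the derived series. Then $N$ is a non-trivial abelian characteristic, hence normal, subgroup of $G_x$. As $n \geq 2$, the group $G_x$ is at least $2$-transitive, hence primitive, on $X'$, so the non-trivial normal subgroup $N$ is transitive; being abelian and transitive it is in fact regular. Fixing $y \in X'$ we therefore obtain a semidirect decomposition $G_x = N \rtimes G_{x,y}$ with $N \cap G_{x,y} = N_y = 1$, whence $G_{x,y} \cong G_x/N$. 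Since $N = G_x^{(n-1)}$ dies in the quotient $G_x/N$, this quotient has derived length at most $n-1$, and hence so does the point stabiliser $G_{x,y}$ of $G_x$. Thus $(G_x, X')$ satisfies the hypotheses of the theorem with $n$ replaced by $n-1$, and the induction hypothesis applies.

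It then remains to reconstruct $(G,X)$ from the now-known structure of $(G_x, X')$. For $n \geq 3$ this is immediate and forces the exceptional cases: the induction identifies $G_x$ with one of $\PGL_2(k)$, $\Sym(4)$ or $\Sym(5)$, while $G_x$ must itself be soluble of length at most $n$. Since $\PGL_2(k)$ is soluble only for $k \in \{\mathbb{F}_2, \mathbb{F}_3\}$, where it is $\Sym(3)$ or $\Sym(4)$, and since $\Sym(5) \supseteq \Alt(5)$ is not soluble, the set $X$ must be finite with at most five elements; a multiply transitive finite group on four or five points is then pinned down to be $\Sym(4)$ or $\Sym(5)$, and inspecting the derived lengths of stabilisers shows that $n=5$ cannot occur, which is exactly why the list terminates. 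The one substantial reconstruction is $n=2$: here $G_x$ is the affine group $\mathrm{AGL}_1(k)$ on $\mathbb{A}^1(k)$, so $X = k \cup \{\infty\}$. Using that $G_x$ is maximal in $G$ (as $G$ is primitive) together with $2$-transitivity, I would select an element interchanging $\infty$ with a point of $k$, show that its conjugation action carries the translation subgroup onto the opposite root group, and read off the field and the Möbius action, recovering $G = \PGL_2(k)$; this is precisely Tits' reconstruction of $\PGL_2$ from a triply transitive group with metabelian stabilisers.

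The main obstacle lies in the anchor cases rather than in the induction. For $n=1$ the group is sharply $2$-transitive, because the abelian stabiliser $G_x$ is regular on $X'$, and one must manufacture a genuine \emph{field}: the existence of a regular normal subgroup is delicate for infinite sharply $2$-transitive groups, but the hypothesis that $G_x$ is abelian is decisive, since a near-field whose multiplicative group is commutative automatically satisfies both distributive laws and is therefore a field. For $n=2$ the difficulty is the analogous recognition problem for $\PGL_2(k)$, including the point that the stabiliser being exactly $\mathrm{AGL}_1(k)$ — of derived length $2$ and involving no field automorphism — excludes $\mathrm{P}\Gamma\mathrm{L}_2(k)$ and forces $G = \PGL_2(k)$. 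Both recognitions are furnished by Tits' thesis, and the contribution of the present argument is to package them into the uniform statement by means of the length-dropping reduction above.
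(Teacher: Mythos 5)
Your argument is essentially the paper's: the key reduction --- the last non-trivial term of the derived series of $G_x$ is an abelian normal subgroup of a primitive group, hence regular, so $G_{x,y}\cong G_x/G_x^{(n-1)}$ has derived length at most $n-1$ --- is exactly the induction step used there, and the endgame via solubility of $\PGL_2(k)$ and Tits' thesis for the $n=2$ recognition matches as well. One correction: the $n=1$ base case is \emph{not} in Tits' thesis (which concerns triply transitive groups); it is the much more recent theorem of K\'arolyi--Kov\'acs--P\'alfy and Mazurov cited in the paper, and your near-field sketch does not quite substitute for it, since the delicate point is precisely whether the sharply $2$-transitive group splits over a regular normal subgroup (near-domain versus near-field), which commutativity of the stabiliser does not obviously settle on its own.
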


The case $n=2$ can easily be derived from a result of J.~Tits~\cite{Tits_thesis}. The case $n=1$ was established much more recently in \cite{KKP_affine} and \cite{Mazurov} (independently of each other).

\begin{proof}
 \cite{KKP_affine} or \cite{Mazurov} cover the case $n=1$; we assume henceforth that $n>1$. 
 
Let $x \in G$. By assumption $G_x$ is  $n$-transitive on $X \setminus \{x\}$. The last term $U_x$ of the derived series of $G_x$ is abelian. As an abelian normal subgroup of an $n$-transitive group (with $n>1$), the group $U_x$ must therefore act regularly on $X\setminus \{x\}$. This proves that $(X, (U_x)_{x \in X})$ is a Moufang set with abelian root groups. Moreover, given a pair $x, y \in X$ with $x \neq y$, we have  $U_x  \cap G_{x, y} = 1$ and, hence, the group $G_{x, y}$ is soluble of derived length at most $n-1$. We can thus apply induction on $n$.

Since a group containing $\PSL_2(k)$ as a subgroup can be soluble only if $k = \FF_2$ or $\FF_3$, the theorem will follow once we finish the proof in the case $n=2$. 

In case $n=2$, we have seen that the double stabiliser $G_{x, y}$ is abelian. Since any abelian transitive group acts freely, it follows that $G$ is sharply $3$-transitive. The desired conclusion then follows from Th.~VI on p.~45 in \cite{Tits_thesis}, which asserts that a sharply $3$-transitive group in which double stabilisers are abelian, is necessarily a projective group $\PGL_2(k)$ over a field $k$, acting on its projective line.
%
%
%
\end{proof}

\bibliographystyle{amsalpha}

\providecommand{\bysame}{\leavevmode\hbox to3em{\hrulefill}\thinspace}
\providecommand{\MR}{\relax\ifhmode\unskip\space\fi MR }
\providecommand{\MRhref}[2]{%
  \href{http://www.ams.org/mathscinet-getitem?mr=#1}{#2}
}
\providecommand{\href}[2]{#2}

\end{document}